\documentclass[twoside,11pt,reqno]{amsart}
\usepackage{amsmath,amssymb,amscd,mathrsfs,amscd}
\usepackage{graphics,verbatim}
\usepackage{hyperref}
\usepackage[all]{xy}
\usepackage{tikz}
\usepackage{tikz-cd}

\oddsidemargin .2in \evensidemargin .2in \textwidth 6in

%formatting section and subsection environments

\let\oldsection=\section

%tikz additions

\usepackage[left=1in, top=1in, right=1in, bottom=1in]{geometry}

\usetikzlibrary{decorations.markings}
\usetikzlibrary{decorations.pathreplacing}
\usetikzlibrary{arrows,shapes,positioning}
\tikzstyle directed=[postaction={decorate,decoration={markings, mark=at position #1 with {\arrow[scale=1]{>}}}}]
\tikzstyle rdirected=[postaction={decorate,decoration={markings, mark=at position #1 with {\arrow[scale=1]{<}}}}]
\usepackage[all]{xy}
\newcommand{\clr}{rgb:black,1;blue,4;red,1}

\newcommand{\bdot}{ node[circle, draw, fill=\clr, thick, inner sep=0pt, minimum width=6pt]{}}
\newcommand{\tick}{ node[rectangle, draw, fill=\clr,  inner sep=0pt, minimum width=1pt, minimum height=6pt]{}}

% Definitions

\newcommand{\p}[1]{\ensuremath{\overline{#1}}}
\newcommand{\losemi}{{\otimes \kern -.78em \ltimes}}
\newcommand{\rosemi}{{\otimes \kern -.78em \rtimes}}

\newcommand{\Hom}{\ensuremath{\operatorname{Hom}}}

\newcommand{\Ext}{\operatorname{Ext}}

\newcommand{\0}{\bar 0}
\renewcommand{\1}{\bar 1}
\newcommand{\Z}{\mathbb{Z}}
\newcommand{\C}{\mathbb{C}}

\newcommand{\gl}{\ensuremath{\mathfrak{gl}}}
\newcommand{\g}{\ensuremath{\mathfrak{g}}}

\newcommand{\f}{\ensuremath{\mathfrak{f}}}

\newcommand{\fg}{\ensuremath{\mathfrak{g}}}
\newcommand{\fb}{\ensuremath{\mathfrak{b}}}

\newcommand{\fh}{\ensuremath{\mathfrak{h}}}
\newcommand{\ff}{\ensuremath{\f}}

\newcommand{\fc}{\ensuremath{\mathfrak{c}}}

\newcommand{\fp}{\ensuremath{\mathfrak{p}}}

\renewcommand{\a}{\alpha}

\newcommand{\ve}{\varepsilon}

\newcommand{\atyp}{\ensuremath{\operatorname{atyp}}}
\newcommand{\V}{\mathcal{V}}
\newcommand{\HH}{\operatorname{H}}
\newcommand{\F}{\mathcal{F}}

\newcommand{\la}{\lambda}

\newcommand{\FF}{\mathcal{F}}

\newcommand{\VV}{\mathcal{V}}

\newcommand{\XX}{\mathcal{X}}

\renewcommand{\Gamma}{\varGamma}

\newcommand{\gln}{\mathfrak{gl}(n)}

%Label settings:

%\def\Label#1{\label{#1}{\tt [#1]}}

%%
%% cleveref theorem environments
%%
%% Note that each environment must have its own counter, in order for cleveref to get names right.
%% But set all theorem-like counters equal to get them all numbered consecutively (within subsections).
%%
\newtheorem{theorem}{Theorem}[subsection]

\makeatletter\let\c@fact\c@theorem\makeatother

\makeatletter\let\c@note\c@theorem\makeatother

\newtheorem{lemma}{Lemma}[subsection]
\makeatletter\let\c@lemma\c@theorem\makeatother

\makeatletter\let\c@alg\c@theorem\makeatother

\newtheorem{remark}{Remark}[subsection]
\makeatletter\let\c@remark\c@theorem\makeatother

\makeatletter\let\c@prop\c@theorem\makeatother

\makeatletter\let\c@conj\c@theorem\makeatother

\makeatletter\let\c@cor\c@theorem\makeatother

\newtheorem{defn}{Definition}[subsection]
\makeatletter\let\c@defn\c@theorem\makeatother

\theoremstyle{definition}
\newtheorem{example}{Example}[subsection]
\makeatletter\let\c@example\c@theorem\makeatother
%\numberwithin{Df}{subsection}
\numberwithin{equation}{subsection}

%
% cross references
%
\usepackage[capitalise]{cleveref}
%\newcommand{\newrefformat}[2]{}
% Cleveref definitions
\crefname{theorem}{Theorem}{Theorems}
\crefname{fact}{Fact}{Facts}
\crefname{note}{Note}{Notes}
\crefname{lemma}{Lemma}{Lemmas}
\crefname{alg}{Algorithm}{Algorithms}
\crefname{remark}{Remark}{Remarks}
\crefname{example}{Example}{Examples}
\crefname{prop}{Proposition}{Propositions}
\crefname{conj}{Conjecture}{Conjectures}
\crefname{cor}{Corollary}{Corollaries}
\crefname{defn}{Definition}{Definitions}
\crefname{equation}{\!\!}{\!\!} %Remove spacing around phantom equation name

%list environment for numbering statements as equations
\newcounter{listequation}

\begin{document}
\title{Complexity and Support Varieties for Type P Lie Superalgebras}

\author{Brian D. Boe }
\address{Department of Mathematics \\
          University of Georgia \\
          Athens, GA 30602}
\email{brian@math.uga.edu}
\author{Jonathan R. Kujawa}
\address{Department of Mathematics \\
          University of Oklahoma \\
          Norman, OK 73019}
\thanks{Research of the second author was partially supported by NSA grant H98230-11-1-0127 and a Simons Collaboration Grant.}\
\email{kujawa@math.ou.edu}
\date{\today}
\subjclass[2010]{Primary 17B55, 17B10; Secondary 17B56}
\keywords{Lie superalgebra, representation theory, Kac module, support variety, complexity}

\begin{abstract} We compute the complexity, $z$-complexity, and support varieties of the (thick) Kac modules for the Lie superalgebras of type $P$. We also show the complexity and the $z$-complexity have geometric interpretations in terms of support and associated varieties; these results are in agreement with formulas previously discovered for other classes of Lie superalgebras. 

Our main technical tool is a recursive algorithm for constructing projective resolutions for the Kac modules. The indecomposable projective summands which appear in a given degree of the resolution are explicitly described using the combinatorics of weight diagrams. Surprisingly, the number of indecomposable summands in each degree can be computed exactly: we give an explicit formula for the corresponding generating function.
\end{abstract}

\maketitle

\section{Introduction}\label{S:intro}

\subsection{Background} Let $\fg=\fg_{\0}\oplus \fg_{\1}$ be a classical Lie superalgebra over the complex numbers. By definition $\fg_{\0}$ is a reductive Lie algebra and the adjoint action of $\fg_{\0}$ on $\fg$ is semisimple. Let $\FF$ denote the  full subcategory of all finite-dimensional $\fg$-supermodules which are completely reducible over $\fg_{\0}$ and have all weights integral.  The category $\FF$ has enough projectives and is in general not semisimple.   In \cite{BKN1, BKN2, BKN3, BKN4, BKN5} the authors and Nakano initiated a study of this category using tools imported from modular representation theory. 

Most relevant to the current paper, in \cite{BKN4} we computed the complexity, $z$-complexity, and support varieties for the Kac supermodules and simple supermodules for $\fg =\mathfrak{gl}(m|n)$.  We showed that complexity and $z$-complexity have natural interpretations in terms of the dimensions of support varieties and the associated varieties of \cite{dufloserganova}.  Entirely analogous results were subsequently obtained for Lie superalgebras in other types by El Turkey \cite{ElTurkey1,ElTurkey2}.  The main goal of the present paper is to obtain similar results for the Kac supermodules for the Lie superalgebras of type $P$, as we now describe.

\subsection{Main Results}   

We define the \emph{$z$-complexity} of a $\fg$-supermodule $M$ in $\FF$, $c_{z}(M)$, to be the rate of growth of the number of indecomposable summands  in each term of the minimal projective resolution of $M$ in $\FF$.   The \emph{complexity} of $M$,  $c_{\F}(M)$, is the rate of growth of the dimension (as a vector space) of each term of the minimal projective resolution of $M$ in $\FF$.  See \cref{SS:ratesofgrowth} for further details.   Note, $z$-complexity is invariant under category equivalences but complexity need not be preserved.

 For the remainder of the introduction let $\fg$ denote the Lie superalgebra $\fp (n)$ as defined in \cref{SS:LSAoftypeP}.  Let $X^{+}(T) = \left\{  \sum_{i=1}^{n} \mu_{i}\varepsilon_{i} \mid \mu_{i} \in \Z, \mu_{1} \geq \dotsb \geq \mu_{n}\right\}$ be the set of dominant integral weights for $\fp(n)$.  Given $\mu \in X^{+}(T)$, a \emph{run} is a maximal length sequence $\mu_{a+1}= \dotsb = \mu_{a+k}$ where $k$ is the \emph{size} of the run.   To each $\mu \in X^{+}(T)$ there is a Kac supermodule of highest weight $\mu$ which we denote by $\Delta (\mu)$.   The simple supermodules in $\FF$ are labelled by $X^{+}(T)$ and can be obtained as the unique irreducible quotients of the Kac supermodules.  Moreover, the  category $\FF$ is a highest weight category with the Kac supermodules as standard objects.   See \cref{SS:catsandmods} for details. 

Given a dominant integral weight $\mu$ for $\fp(n)$, let $o=o (\mu)$ denote the number of runs of $\mu$ which have odd size.  In \cref{T:zcomplexity,T:complexityandsupportequalities} the $z$-complexity and complexity are shown to satisfy
\begin{equation}\label{E:complexityequalities}
\begin{aligned}
c_{z} \left(\Delta(\mu) \right) &= \frac{n-o}{2}, \\ 
c_{\F}\left(\Delta(\mu) \right) &= \binom{n}{2} - \binom{o}{2}.
\end{aligned}
\end{equation}
 
Furthermore, these formulas can be interpreted in terms of varieties. By computing the support and associated varieties of $\Delta(\mu)$ we are able to verify  in \cref{T:fsupportvariety,T:complexityandsupports} that
\begin{equation}\label{E:complexityequalities2}
\begin{aligned}
c_{z} \left(\Delta(\mu) \right) &= \dim \VV_{(\ff, \ff_{\0})}\left(\Delta(\mu) \right), \\ 
c_{\F}\left(\Delta(\mu) \right) &= \dim \VV_{(\fg, \fg_{\0})}\left(\Delta(\mu) \right) + \dim \XX_{\Delta(\mu)}, 
\end{aligned}
\end{equation}
where $\VV_{(\fg , \fg_{\0})} ( \Delta(\mu) )$ (resp.\ $\VV_{(\ff , \ff_{\0})} ( \Delta(\mu) )$) is the support variety with respect to $\fg$ (resp.\ the detecting subalgebra $\ff  \subseteq \fg$), and $\XX_{\Delta(\mu)}$ is the associated variety of \cite{dufloserganova}.  Identical equalities were verified for other types in \cite{BKN4, ElTurkey1, ElTurkey2}.  Our results provide further support for the conjecture that these formulas should hold in general.  That is, for any stable classical Lie superalgebra $\fg$ over $\C$ with detecting subalgebra $\ff$ defined as in \cite{BKN1}, the formulas \cref{E:complexityequalities2} should hold when $\Delta(\mu)$ is replaced by any module $M$ in $\FF$.

\subsection{Overview} The paper is organized as follows. In \cref{S:preliminaries}, we set up the basic conventions for the Lie superalgebra $\fp (n)$, the category $\FF$, and the combinatorics of weight diagrams following \cite{WINART}.  In \cref{S:ProjResolutions} we introduce our main technical tool: an explicit projective resolution for each Kac supermodule (constructed in \cref{T:BBProjResolution}) which is described combinatorially in terms of weight diagrams.  In particular, by \cref{T:degree}  the indecomposable summands which appear in the resolution are completely described combinatorially by certain ``allowable functions'' between weight diagrams. In \cref{S:growth} weight diagrams are used to compute the rate of growth of the number of indecomposable summands for this projective resolution.  

Surprisingly, the number of indecomposable summands in this projective resolution can be computed on the nose.  If $S_{\mu}(u)$ is the generating function where the coefficient of $u^{d}$ is the number of indecomposable summands in the $d$th term of the projective resolution for $\Delta(\mu)$, then  \cref{T:zcomplexity} shows 
\[
S_{\mu}(u) = \frac{f_{\mu}(u)}{(1-u)^{\frac{n-o}{2}}} ,
\] where $f_{\mu}(u)$ is an explicit polynomial.  These calculations yield upper bounds on $z$-complexity and, when combined with dimension estimates, complexity, which match the equalities given in \cref{E:complexityequalities}.

Finally, in \cref{S:supportvarieties} we use representation theoretic computations to obtain lower bounds on the support and associated varieties for the Kac supermodules.  In combination with previous results, these lower bounds allow us to compute the support varieties for the Kac supermodules in \cref{T:complexityandsupports,T:fsupportvariety}  as well as verify  \cref{E:complexityequalities} and  \cref{E:complexityequalities2}. 

It is worth remarking these invariants do not depend on $\mu$, but only on the number of odd runs in $\mu$.  The results of \cite{BKN4} showed the degree of atypicality of the highest weight played this role for $\mathfrak{gl}(m|n)$.  This suggests that $n$ minus the number of odd runs may be the true analogue of atypicality for $\fp (n)$. 

\subsection{Additional Questions}  As the projective resolutions constructed here have the same rate of growth as the minimal projective resolutions, it is natural to ask if these are in fact minimal.  Related to this, it is worth pointing out the resolutions here fail to satisfy ``parity vanishing'' (see \cref{R:examples}). This is still true even if one takes into account parity shifts.   Since we could not find a compelling argument one way or the other on the question of minimality, we leave it as an open problem.  

The Kac supermodules, $\Delta(\mu)$, studied here are the ``thick'' Kac supermodules of \cite{WINART}.  In \emph{loc.\ cit.} they also introduce ``thin'' Kac supermodules, $\nabla (\mu)$, and show $\FF$ is a highest weight category where $\Delta(\mu)$ (resp.\ $\nabla(\mu)$) are the standard (resp.\ costandard) objects. Related to the lack of parity vanishing discussed above, it would be interesting to determine if $\FF$ admits a Kazhdan-Lusztig Theory in the sense of Cline-Parshall-Scott \cite{CPS:92,CPS:93}.  

Finally, it would be interesting to compute support varieties, complexity, and $z$-complexity for the thin Kac supermodules and the simple supermodules.  Answering these questions for $\mathfrak{gl}(m|n)$ used the existence of a duality on the category which interchanges the standard and costandard objects.  There is no such duality for $\fp(n)$, which suggests new ideas will be needed.

\subsection{A Companion Mobile App}
The first author has created a mobile app, \emph{Homologica}, to facilitate fast, easy, animated, real-time construction of the allowable functions $f:\mu\to\la$ defined in \cref{SS:KeyFunctions}. In particular, the app allows one to directly determine the indecomposable summands $P(\la)$ of a given degree in the projective resolution of the Kac module $\Delta(\mu)$ without going through the recursion of the algorithm. \emph{Homologica} is available free for iPhone and iPad on the App Store.

\section{Preliminaries} \label{S:preliminaries}  

\subsection{Superspaces} Let $\C $ be the field of complex numbers.  Unless otherwise stated, all vector spaces considered in this paper will be finite-dimensional $\C$-vector spaces.  A \emph{superspace} is a $\Z_{2}$-graded vector space, $V= V_{\0} \oplus V_{\1}$.  Given a superspace $V$ and a homogeneous element $v \in V$, we write $\p{v} \in \Z_{2}$ for its parity.  For short we call an element of $V$ \emph{even} (resp.\ \emph{odd}) if $\p{v}=\0$ (resp.\ $\p{v}=\1$).  We view $\C$ itself as a superspace concentrated in parity $\0$.  Given a superspace $V$ we say the \emph{dimension} of $V$ is $m|n$ to record that the dimension of $V_{\0}$ is $m$ and the dimension of $V_{\1}$ is $n$.  In particular, the dimension of $V$ as a vector space is $m+n$ and the \emph{superdimension} of $V$ is, by definition, $m-n$.

If $V$ and $W$ are superspaces, then $V \otimes W$ is naturally a superspace where a pure tensor has parity given by the formula $\p{v\otimes w} = \p{v} + \p{w}$ for all homogeneous $v\in V$ and $w \in W$.  Similarly, the space of $\C$-linear maps, $\Hom_{\C}(V,W)$ is naturally $\Z_{2}$-graded by declaring that a linear map $f:V \to W$ has parity $r \in \Z_{2}$ if $f(V_{s}) \subseteq W_{r+s}$ for all $s \in \Z_{2}$.

\subsection{The Lie Superalgebra of type P} \label{SS:LSAoftypeP} Let $I=I_{n|n}$ be the ordered index set consisting of the $2n$ symbols $\left\{1, \dotsc , n, 1', \dotsc , n' \right\}$.  Let $\bar{}: I \to \Z_{2}$ be the function defined by $\p{i} = \0$ if $i \in \left\{1, \dotsc ,n \right\}$ and $\p{i}=\1$ if $i \in \left\{1', \dotsc ,n' \right\}$. Let $V$ be the vector space with distinguished basis $\left\{v_{i}  \mid  i \in I \right\}$.  We define a $\Z_{2}$-grading on $V$ by declaring $\p{v}_{i}=\p{i}$ for all $i \in I$.  Let $\gl (V) = \gl(n|n)$ denote the superspace of all linear endomorphisms of $V$.  Then $\gl (V)$ is a Lie superalgebra via the graded version of the commutator bracket.  That is, 
\[
[f,g] = f \circ g - (-1)^{\p{f}\; \p{g}}g \circ f
\]
for all homogeneous $f,g \in \gl (V)$.  Note here and later we adopt the convention that a condition is only given for homogeneous elements and leave implicit the understanding that the general case can be obtained via linearity. 

Define an odd, supersymmetric, nondegenerate bilinear form on $V$ by declaring $(v_{i},v_{j'}) = (v_{j'}, v_{i}) = \delta_{i,j}$, $(v_{i},v_{j}) = (v_{i'},v_{j'}) = 0$, for $i,j=1, \dotsc ,n$.  We define a Lie superalgebra $\fg = \fp (n) \subseteq \gl (V)$ consisting of all linear maps which preserve the bilinear form for all homogeneous $x,y \in V$,
\[
\fg=\fp(n) = \left\{f \in \gl (V) \, \left| \, (f(x),y) + (-1)^{\bar{f}\;  \bar{x}}(x,f(y_{}))=0 \right. \right\}.
\]   One can easily check that the supercommutator defines a Lie superalgebra structure on $\fg=\fp (n)$.  By definition this is the Type $P$ Lie superalgebra. 

With respect to our choice of basis it is straightforward to describe $\fp (n)$ as $2n \times 2n$ complex matrices,
\begin{equation}\label{E:matrixform}
\fg = \left\{\left(\begin{matrix} A & B \\
                                  C & -A^{t}
\end{matrix} \right) \right\},
\end{equation}
where $A,B,C$ are $n \times n$ complex matrices with $B$ symmetric, $C$ skew-symmetric, and where $A^{t}$ denotes the transpose of $A$.  The $\Z_{2}$-grading is given by observing $\fg_{\0}$ is the subspace of all such matrices where $B=C=0$ and $\fg_{\1 }$ is the subspace of all such matrices where $A=0$.  In particular, notice $\fg_{\0}$ is canonically isomorphic to the Lie algebra $\mathfrak{gl}(n)$.

A $\fg$-supermodule is a superspace $M$ with an action of $\fg$ which respects the grading in the sense that $\fg_{r}.M_{s} \subseteq M_{r+s}$ for all $r,s \in \Z_{2}$, and which satisfies graded versions of the axioms for a Lie algebra module.  If $M,N$ are $\fg$-supermodules, then $\Hom_{\C}(M,N)$ inherits a $\Z_{2}$-grading as before. 
A homogeneous $\fg$-supermodule homomorphism is a homogeneous $f \in \Hom_{\C}(M,N)$ which satisfies $f(xm)=(-1)^{\p{f}\p{x}}xf(m)$ for all homogeneous $x \in \fg$ and $m \in M$.  Note we do not assume a supermodule homomorphism is homogenous; instead we will make it explicit if and when it is important that a map be homogeneous.

Let $U(\fg)$ denote the universal enveloping superalgebra of $\fg$.  Then $U(\fg )$ is a Hopf superalgebra.  In particular, if $M$ and $N$ are finite-dimensional $\fg$-supermodules (equivalently, finite-dimensional $U(\fg)$-supermodules) one can use the coproduct and antipode of $U(\fg)$ to define a $\fg$-supermodule
structure on the tensor product $M\otimes N$ and the dual $M^{*}$.  Also, viewing $\C$ as a superspace concentrated in parity zero with action given by the counit of $U(\fg )$ defines the trivial supermodule for $\fg$.  We also have $\C_{\1}$ which is $\C$ with $\fg$-supermodule structure given by the counit, but concentrated in parity $\1$.  For brevity we frequently leave the prefix ``super'' implicit in what follows.

There is a $\Z$-grading on $\fg$ which is compatible with the $\Z_{2}$ grading in that reducing modulo two recovers the $\Z_{2}$-grading.  It is given by setting $\fg_{1}$ equal to the subspace of all matrices of the form \cref{E:matrixform} where $A$ and $C$ are zero,  $\fg_{0} = \fg_{\0}$, and $\fg_{-1}$ consists of all matrices of the form \cref{E:matrixform} where $A$ and $B$ are zero.    

Let $\fb_{\0}$ denote the subalgebra of $\fg_{\0}$ consisting of matrices which are upper triangular in the $A$ block.  Then we choose the Borel subalgebra of $\fg$ to be $\fb = \fb_{\0} \oplus \fg_{-1}$.  Let $\fh$ denote the Cartan Lie subsuperalgebra of $\fg$ consisting of diagonal matrices.  We fix a basis $\varepsilon_{1}, \dotsc , \varepsilon_{n} \in \fh^{*}$ where $\varepsilon_{i}$ is the linear functional which picks out the $i$th diagonal entry of $h \in \fh$ when written as a matrix as in \cref{E:matrixform}; that is, the $i$th diagonal entry of the $A$ block. Let $X(T) = \oplus_{i=1}^{n} \Z\varepsilon_{i}$ denote the integral weight lattice.  Let 
\[
X^{+}(T) = \left\{ \left. \mu = \sum_{i=1}^{n} \mu_{i}\varepsilon_{i} \in X(T) \ \right| \,  \mu_{1} \geq \dotsb \geq \mu_{n}  \right\}.
\]

Fix $\rho = \sum_{i=1}^{n}(n-i)\varepsilon_{i}$. We denote by $\bar{\mu}$ the element $\mu+\rho = \sum_{i=1}^{n}\bar\mu_{i}\varepsilon_{i}$. It is convenient to introduce the bijection $X(T) \to \oplus_{i=1}^{n} \Z$ given by $\mu \mapsto [\mu]= [a_{1}, \dotsc , a_{n}]$ where $\mu + \rho = \sum_{i=1}^{n} a_{n-i+1}\varepsilon_{i}$.  Then $\mu$ is an element of $X^{+}(T)$ if and only if it maps to a strictly increasing sequence $[\mu]$ of integers.

\subsection{A Basis}\label{SS:conventions}

For later calculations it will be useful to fix a choice of basis for $\fp (n)$.  Given $ i,j \in I_{n|n}$, let $E_{i,j}$ denote the matrix unit with a $1$ in the $(i,j)$ position and zero everywhere else. For $1 \leq i \leq n$ let $h_{i}=E_{i,i}-E_{i',i'} \in \fh \subseteq \fg_{0}$.  For $1 \leq i,j \leq n$, $i \neq j$, let $a_{i,j}= E_{i,j}-E_{j',i'} \in \fg_{0}$.  This is a root vector for the root $\varepsilon_{i}-\varepsilon_{j}$.   For $1 \leq i \leq j \leq n$, let $b_{i,j}= E_{i,j'}+E_{j, i'}\in \fg_{1}$.  This is a root vector for the root $\varepsilon_{i}+\varepsilon_{j}$. For $1 \leq i< j \leq n$, let $c_{i,j} = E_{i',j}-E_{j',i} \in \fg_{-1}$.  This is a root vector for the root $-\varepsilon_{i}-\varepsilon_{j}$.  
We have the following commutator formulas:

\begin{equation}\label{E:commutator1}
[c_{u,v}, b_{p,q}] = \begin{cases}
                   \delta_{q,u}a_{p,v}-\delta_{q,v}a_{p,u}+\delta_{p,u}a_{q,v}-\delta_{p,v}a_{q,u}, & p < q; \\
                   \delta_{p,u}2a_{p,v} - \delta_{p,v}2a_{p,u}, & p=q.
\end{cases}
\end{equation}  When contemplating the above formulas the reader should keep in mind our conventions mean $b_{p,p}$ has a $2$ in the $(p,p')$ position.
Similarly,
\begin{equation}\label{E:commutator2}
[ a_{i,j}, c_{p,q}] = \delta_{i,p}c_{q,j} - \delta_{i,q}c_{p,j},
\end{equation}
where we adopt the convention that $c_{i,j} := - c_{j,i}$ if $i > j$ and $c_{i,i}=0$.
Further calculations show
\begin{equation}\label{E:commutator3}
[a_{i,j}, b_{p,q}] = \delta_{j,p}b_{i,q} + \delta_{j,q}b_{i,p},
\end{equation}
where we adopt the convention that $b_{i,j} :=  b_{j,i}$ if $i > j$.

\subsection{Categories and Modules}\label{SS:catsandmods}

Let $\F=\F(\fg,\fg_{\0})$ denote the full subcategory of all $\fg$-modules which are finite-dimensional, decompose into weight spaces with respect to the action of $\fh$, and all weights lie in $X(T)$. In particular, these representations are completely reducible when restricted to $\fg_{\0}$.  Moreover, this category is closed under tensor products and duals and is a monoidal supercategory.  The category $\F$ admits a parity shift functor $\Pi$ given by setting $\Pi M=\C_{\1}\otimes M$.   Except when otherwise stated, all $\fg$-modules will be assumed to be objects of $\F$.  

 For $\mu \in X^{+}(T)$, let $L_{0}(\mu)$ denote the simple $\fg_{\0} \cong \mathfrak{gl}(n)$-module of highest weight $\mu$ with respect to the Cartan and Borel subalgebras, $\fh$ and $\fb_{\0} \subseteq \fg_{\0}$, respectively.  The \emph{(thick) Kac module} for $\mu \in X^{+}(T)$ is defined to be 
\[
\Delta (\mu) = U(\fg) \otimes_{U(\fg_{\0}\oplus \fg_{-1})} L_{0}(\mu),
\] where $L_{0}(\mu)$ is viewed as a $\fg_{\0} \oplus \fg_{-1}$-module by having $\fg_{-1}$ act trivially.  By standard arguments $\Delta(\mu)$ is a highest weight module with a unique maximal proper submodule. If we write $L(\mu)$ for the simple quotient of $\Delta(\mu)$, then the set $\left\{L(\mu)  \mid \mu \in X^{+}(T) \right\}$ gives a complete set of simple modules in $\FF$ up to isomorphism and parity shift.  Let $P(\mu)$ denote the indecomposable projective cover of $L(\mu)$ and $\Delta(\mu)$.  By \cite[Proposition 2.2.2]{BKN3} it is known that the projectives and injectives in $\FF$ coincide.

\subsection{Weights and Weight Diagrams}\label{SS:WeightsandWeightDiagrams}
Given $\mu = \sum_{i=1}^{n} \mu_{i}\varepsilon_{i} \in X^{+}(T)$ define its \emph{degree of atypicality} by 
\begin{equation}\label{E:atypdef}
\atyp (\mu) = \#\left\{i \mid 1 \leq i < n, \mu_{i}=\mu_{i+1} \right\},
\end{equation} where we write $\# X$ for the cardinality of a set $X$. We call $\mu$ (and $\Delta(\mu)$ and $L(\mu)$) \emph{typical} if $\atyp (\mu)=0$ and \emph{atypical}, otherwise.  

A \emph{weight diagram with $n$ dots} is the real number line with markings on $n$ distinct integers.  We draw the markings as dots.  When needed to avoid confusion, we use tick-marks for integers which do not have dots.  We label one or more integers on the number line when needed.    When a dot in a weight diagram does not have a dot to its immediate left or right, then we call it an \emph{isolated dot}. When a dot in a weight diagram does not have a dot to its immediate left, then we call it a \emph{left-isolated} dot.  In particular, every isolated dot is left-isolated.

Given $\mu \in X^{+}(T)$, the \emph{weight diagram of $\mu$} is the  weight diagram obtained by placing a dot at the integers $a_{1}, \dots,  a_{n}$, where $\mu \mapsto [\mu]=[a_{1}, \dotsc , a_{n}]$ is the map from \cref{SS:LSAoftypeP}.  Since $\mu$ is dominant integral, the integers $a_{1}, \dotsc , a_{n}$ are strictly increasing and the result will be a weight diagram with $n$ dots.  Conversely, given a weight diagram with $n$ dots, there is a unique $\mu \in X^{+}(T)$ which corresponds to that weight diagram.  We freely identify a dominant weight with its weight diagram.

For example, if $n=4$ and $\mu = 2\varepsilon_{1} + 2\varepsilon_{2} + \varepsilon_{3} -4\varepsilon_{4}$, then $\bar\mu = \mu+\rho = 5\varepsilon_{1}+4\varepsilon_{2}+2\varepsilon_{3}  -4\varepsilon_{4}$,  $[\mu]= [-4,2,4,5]$, and the weight diagram is
\begin{equation*}
\begin{tikzpicture}[scale=.75, color=\clr, baseline=0]
	\draw [thick ] (-5,0) to  (6,0);
	\draw (-4,0)\bdot;
	\draw (2,0)\bdot;
	\draw (4,0)\bdot;
	\draw (5,0)\bdot;
	\draw (0,0)\tick;
	\draw (-1,0)\tick;
	\draw (-2,0)\tick;
	\draw (-3,0)\tick;
	\draw (1,0)\tick;
	\draw (3,0)\tick;
        \node at (0,-0.5) {\small{$0$}};
	\node at (6.5,0) {$\cdots$};
	\node at (-5.5,0) {$\cdots$};
\end{tikzpicture} .
\end{equation*} In this example the dots at $-4$ and $2$ are isolated and the dots at $-4$, $2$, and $4$ are left-isolated.

Given $t \in \Z$ and $\mu, \la \in X^{+}(T)$, if we write $[\mu]=[a_{1}, \dotsc , a_{n}]$ and $[\la]=[b_{1}, \dotsc , b_{n}]$, then define 
\[
\ell_{t}(\lambda, \mu) = \# \left\{i \mid b_{i} \leq t  \right\}-\# \left\{i \mid a_{i} \leq t  \right\}.
\]  In terms of weight diagrams, $\ell_{t}(\lambda,\mu)$ is the difference in the number of dots which are at or to the left of the integer $t$ in the weight diagrams of $\lambda$ and $\mu$.  Since $\ell_{t}(\lambda, \mu)=0$ for all but finitely many $t$, it makes sense to define the \emph{relative length function} on $\lambda, \mu \in X^{+}(T)$ as
\begin{equation}\label{E:relativelengthdef}
\ell(\lambda, \mu) =\sum_{t=-\infty}^{\infty} \ell_{t}(\lambda, \mu).
\end{equation}
Define a partial order on $X^{+}(T)$ by declaring  $\mu \leq \lambda$ if $\lambda_{i} \leq \mu_{i}$ for $i=1, \dotsc , n$. That is, $\mu \leq \lambda$  if and only if $\ell_{t}(\lambda, \mu) \geq 0$ for all $t \in \Z$.

\section{Projective Resolutions}\label{S:ProjResolutions}  

\subsection{Translation Functors}\label{}  Regard $V$ as the natural module for $\fg$.  For each $i \in \Z$ there is an exact endofunctor $\Theta_{i}: \FF \to \FF$ given by tensoring with $V$ and projecting onto the generalized $i$-eigenspace for the action of the Casimir element. See \cite[Definition 4.1.7]{WINART} where this functor is denoted $\Theta'_{i}$. By  \cite[Theorem 7.1.1]{WINART} these functors take indecomposable projectives to indecomposable projectives or zero.   We will need the following special cases of \cite[Proposition 5.2.1, Lemmas 7.2.1 and 7.2.3]{WINART}, concerning the effect of translation functors on Kac modules and on indecomposable projective modules.  As before, we do not distinguish between a dominant weight and its weight diagram.  In the presentation of the following results we adopt the convention that the weight diagrams in question are identical except for the indicated changes.

\begin{theorem}\label{T:TranslationonStandards} Let $\lambda \in X^{+}(T)$ and let $j \in \Z$. 
\begin{enumerate}
\item  If $\lambda$ is as given below, then $\Theta_{j+1}\Delta(\lambda) \cong \Delta(\mu)$, where $\mu$ is as given below:

\begin{align*} \lambda &=
\begin{tikzpicture}[scale=.75, color=\clr, baseline=0]
	\draw [thick ] (-3,0) to  (1,0);
	\draw (-2,0)\bdot;
	\draw (-1,0)\tick;
	\draw (0,0)\tick;
        \node at (-2,-0.5) {\scriptsize{$j-1$}};
        \node at (-1,-0.5) {\scriptsize{$j$}};
        \node at (0,-0.5) {\scriptsize{$j+1$}};
	\node at (1.5,0) {$\cdots$};
	\node at (-3.5,0) {$\cdots$};
\end{tikzpicture} \\
\mu &=
\begin{tikzpicture}[scale=.75, color=\clr, baseline=0]
	\draw [thick ] (-3,0) to  (1,0);
	\draw (-1,0)\bdot;
	\draw (-2,0)\tick;
	\draw (0,0)\tick;
        \node at (-2,-0.5) {\scriptsize{$j-1$}};
        \node at (-1,-0.5) {\scriptsize{$j$}};
        \node at (0,-0.5) {\scriptsize{$j+1$}};
	\node at (1.5,0) {$\cdots$};
	\node at (-3.5,0) {$\cdots$};
\end{tikzpicture}
\end{align*}
\item  If $\lambda$ is as given below, then there is a short exact sequence 
\[
0 \to \Delta (\mu') \to \Theta_{j+1}\Delta(\lambda) \to \Delta(\mu'') \to 0
\]  where $\mu'$ and $\mu''$ are as given below:
\begin{align*} \lambda &=
\begin{tikzpicture}[scale=.75, color=\clr, baseline=0]
	\draw [thick ] (-3,0) to  (1,0);
	\draw (-2,0)\bdot;
	\draw (-1,0)\tick;
	\draw (0,0)\bdot;
        \node at (-2,-0.5) {\scriptsize{$j-1$}};
        \node at (-1,-0.5) {\scriptsize{$j$}};
        \node at (0,-0.5) {\scriptsize{$j+1$}};
	\node at (1.5,0) {$\cdots$};
	\node at (-3.5,0) {$\cdots$};
\end{tikzpicture} \\
\mu' &=
\begin{tikzpicture}[scale=.75, color=\clr, baseline=0]
	\draw [thick ] (-3,0) to  (1,0);
	\draw (-2,0)\bdot;
	\draw (-1,0)\bdot;
	\draw (0,0)\tick;
        \node at (-2,-0.5) {\scriptsize{$j-1$}};
        \node at (-1,-0.5) {\scriptsize{$j$}};
        \node at (0,-0.5) {\scriptsize{$j+1$}};
	\node at (1.5,0) {$\cdots$};
	\node at (-3.5,0) {$\cdots$};
\end{tikzpicture}\\
\mu'' &=
\begin{tikzpicture}[scale=.75, color=\clr, baseline=0]
	\draw [thick ] (-3,0) to  (1,0);
	\draw (-2,0)\tick;
	\draw (-1,0)\bdot;
	\draw (0,0)\bdot;
        \node at (-2,-0.5) {\scriptsize{$j-1$}};
        \node at (-1,-0.5) {\scriptsize{$j$}};
        \node at (0,-0.5) {\scriptsize{$j+1$}};
	\node at (1.5,0) {$\cdots$};
	\node at (-3.5,0) {$\cdots$};
\end{tikzpicture}
\end{align*}
\end{enumerate}
\end{theorem}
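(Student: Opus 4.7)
My plan is to follow the approach indicated by the cited results: reduce to computing $\Delta(\lambda) \otimes V$ via the tensor identity, then extract the $(j+1)$-generalized Casimir eigenspace.

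First, by the tensor identity,
$$\Delta(\lambda) \otimes V \cong \Ind_{\fg_0 \oplus \fg_{-1}}^{\fg}\bigl(L_0(\lambda) \otimes V\bigr),$$
since $\Delta(\lambda) = \Ind_{\fg_0 \oplus \fg_{-1}}^{\fg} L_0(\lambda)$ and $U(\fg)$ is free over $U(\fg_0 \oplus \fg_{-1})$ by PBW, so induction is exact. Next, I analyze $V$ as a module for $\fg_0 \oplus \fg_{-1}$. Using the basis and commutator formulas of \cref{SS:conventions}, the subspace $V_{\1}$ is killed by $\fg_{-1}$ and preserved by $\fg_0$, while $\fg_{-1}$ sends $V_{\0}$ into $V_{\1}$. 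Thus
$$0 \to V_{\1} \to V \to V_{\0} \to 0$$
is a short exact sequence of $(\fg_0 \oplus \fg_{-1})$-modules. Tensoring with $L_0(\lambda)$ and inducing yields a short exact sequence of $\fg$-modules,
$$0 \to \Ind(L_0(\lambda) \otimes V_{\1}) \to \Delta(\lambda) \otimes V \to \Ind(L_0(\lambda) \otimes V_{\0}) \to 0.$$

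Third, I decompose via Pieri's rule for $\mathfrak{gl}(n) \cong \fg_0$. Since $V_{\0}$ has weights $\varepsilon_1,\dots,\varepsilon_n$ and $V_{\1}$ has weights $-\varepsilon_1,\dots,-\varepsilon_n$, the tensor products $L_0(\lambda) \otimes V_{\0}$ and $L_0(\lambda) \otimes V_{\1}$ filter by the simples $L_0(\lambda + \varepsilon_k)$ and $L_0(\lambda - \varepsilon_k)$, respectively, over those $k$ for which the shift is dominant. Because $\fg_{-1}$ acts trivially on both outer terms of the exact sequence above, inducing produces a filtration by Kac modules $\Delta(\lambda \pm \varepsilon_k)$; in weight-diagram language this corresponds exactly to sliding a single dot of $\lambda$ one step to the right (from $V_{\0}$) or one step to the left (from $V_{\1}$), provided the resulting sequence is strictly increasing.

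The substantive step — and the place I expect to spend the most care — is the Casimir eigenvalue calculation needed to extract the $(j+1)$-generalized eigenspace. One computes the quadratic Casimir of $\fp(n)$ in the basis of \cref{SS:conventions} and verifies that its eigenvalue on $\Delta(\lambda \pm \varepsilon_k)$ equals $j+1$ precisely when the dot that is moved lands at position $j$. In Case (1), only the rightward shift $j-1 \to j$ is available, so $\Theta_{j+1}\Delta(\lambda) \cong \Delta(\mu)$. In Case (2), both the rightward shift $j-1 \to j$ (from $V_{\0}$, contributing to the quotient) and the leftward shift $j+1 \to j$ (from $V_{\1}$, contributing to the submodule) survive, giving the advertised short exact sequence with $\Delta(\mu')$ as submodule and $\Delta(\mu'')$ as quotient, inherited from $V_{\1} \subset V$. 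These Casimir computations are carried out in [WINART, Lemmas~7.2.1 and 7.2.3]; after matching their weight conventions with those in \cref{SS:LSAoftypeP}, the theorem can also be read off directly from \emph{loc.\ cit.}
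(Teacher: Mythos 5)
The paper gives no proof of this result; it is stated as a special case of results from the cited reference \cite{WINART} (their Proposition 5.2.1 for the Kac-module statements, Lemmas 7.2.1 and 7.2.3 for the projectives in the next theorem) and used as a black box. Your reconstruction --- tensor identity, the $(\fg_{0}\oplus\fg_{-1})$-module filtration $0\to V_{\1}\to V\to V_{\0}\to 0$, Pieri's rule, then extract the generalized eigenspace --- is the natural proof and is essentially how the cited source establishes these facts, so I regard it as correct. Two small points worth fixing in the write-up. First, the tensor identity $\Ind(M)\otimes N\cong\Ind(M\otimes\Res N)$ is not a consequence of exactness of induction; exactness (from freeness of $U(\fg)$ over $U(\fg_{0}\oplus\fg_{-1})$) is what you need later, to induce the short exact sequence, but the tensor identity itself comes from the Hopf structure. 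Second, and more substantively, there is no quadratic Casimir \emph{of} $\fp(n)$: $\fp(n)$ carries no invariant nondegenerate bilinear form, so the element whose generalized eigenspaces define $\Theta_{j+1}$ is the ``fake Casimir'' $\Omega$ of \cite{WINART}, an operator built from dual bases in $\fp(n)$ and $\gl(n|n)$ that acts on $M\otimes V$ rather than being an element of $U(\fp(n))$. Since you defer that eigenvalue computation to \emph{loc.\ cit.}\ anyway this does not create a gap, but the sentence ``one computes the quadratic Casimir of $\fp(n)$'' should be rephrased accordingly.
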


\begin{theorem}\label{T:TranslationonProjectives}  Let $\lambda \in X^{+}(T)$ and let $j \in \Z$.
\begin{enumerate}
\item If $\lambda$ is as below, then $\Theta_{j+1}P(\lambda) \cong P(\mu)$, where $\mu$ is as below:
\begin{align*} \lambda &=
\begin{tikzpicture}[scale=.75, color=\clr, baseline=0]
	\draw [thick ] (-3,0) to  (1,0);
	\draw (-2,0)\bdot;
	\draw (-1,0)\tick;
        \node at (-2,-0.5) {\scriptsize{$j-1$}};
        \node at (-1,-0.5) {\scriptsize{$j$}};
	\node at (1.5,0) {$\cdots$};
	\node at (-3.5,0) {$\cdots$};
\end{tikzpicture} \\
\mu &=
\begin{tikzpicture}[scale=.75, color=\clr, baseline=0]
	\draw [thick ] (-3,0) to  (1,0);
	\draw (-2,0)\tick;
	\draw (-1,0)\bdot;
        \node at (-2,-0.5) {\scriptsize{$j-1$}};
        \node at (-1,-0.5) {\scriptsize{$j$}};
	\node at (1.5,0) {$\cdots$};
	\node at (-3.5,0) {$\cdots$};
\end{tikzpicture}
\end{align*}
\item If $\lambda$ is as below, then $\Theta_{j+1}P(\lambda) = P(\mu)$, where $\mu$ is as below:
\begin{align*} \lambda &=
\begin{tikzpicture}[scale=.75, color=\clr, baseline=0]
	\draw [thick ] (-4,0) to  (1,0);
	\draw (-3,0)\tick;
	\draw (-2,0)\bdot;
	\draw (-1,0)\bdot;
        \node at (-3,-0.5) {\scriptsize{$j-2$}};
        \node at (-2,-0.5) {\scriptsize{$j-1$}};
        \node at (-1,-0.5) {\scriptsize{$j$}};
	\node at (1.5,0) {$\cdots$};
	\node at (-4.5,0) {$\cdots$};
\end{tikzpicture} \\
\mu &=
\begin{tikzpicture}[scale=.75, color=\clr, baseline=0]
	\draw [thick ] (-4,0) to  (1,0);
	\draw (-3,0)\bdot;
	\draw (-2,0)\tick;
	\draw (-1,0)\bdot;
        \node at (-3,-0.5) {\scriptsize{$j-2$}};
        \node at (-2,-0.5) {\scriptsize{$j-1$}};
        \node at (-1,-0.5) {\scriptsize{$j$}};
	\node at (1.5,0) {$\cdots$};
	\node at (-4.5,0) {$\cdots$};
\end{tikzpicture}
\end{align*}
\end{enumerate}
\end{theorem}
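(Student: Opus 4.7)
The theorem is a specialization of \cite[Proposition 5.2.1, Lemmas 7.2.1 and 7.2.3]{WINART} to the two weight-diagram configurations displayed, so the plan is to deduce it from those results together with the general machinery already recorded in the excerpt. First I would invoke \cite[Theorem 7.1.1]{WINART} to reduce the problem to identifying a single highest weight: since $\Theta_{j+1}$ is exact and sends each indecomposable projective to either zero or another indecomposable projective, it suffices to pin down the highest weight of $\Theta_{j+1}P(\lambda)$ and to verify non-vanishing.

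For part (1), since $\FF$ is a highest weight category with standard objects $\Delta(\nu)$, the projective $P(\lambda)$ carries a $\Delta$-filtration whose top factor is $\Delta(\lambda)$. Applying the exact functor $\Theta_{j+1}$ produces a filtration of $\Theta_{j+1}P(\lambda)$ whose top subquotient is $\Theta_{j+1}\Delta(\lambda)$, and this is identified as $\Delta(\mu)$ by \cref{T:TranslationonStandards}(1). In particular the image is nonzero and surjects onto $\Delta(\mu)$, so the indecomposable projective $\Theta_{j+1}P(\lambda)$ must have highest weight $\mu$, forcing $\Theta_{j+1}P(\lambda) \cong P(\mu)$. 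The same strategy works for part (2), except that the weight diagram of $\lambda$ now has two adjacent dots at $j-1$ and $j$, a configuration not directly covered by \cref{T:TranslationonStandards}. The required computation of the top factors of $\Theta_{j+1}P(\lambda)$ in this situation is the content of the filtration analysis in \cite[Lemmas 7.2.1 and 7.2.3]{WINART}; substituting its output into the above argument identifies the highest weight of $\Theta_{j+1}P(\lambda)$ as the stated $\mu$.

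The main obstacle is the second case: one needs to understand how $\Theta_{j+1}$ interacts with the standard at a weight whose diagram has an adjacent pair of dots straddling the position $j$, and then propagate that information through every $\Delta$-factor in the standard filtration of $P(\lambda)$ to rule out cancellations at the top. This combinatorial bookkeeping is precisely what the lemmas in \cite{WINART} accomplish, so my plan is to invoke those results directly rather than redo the calculation.
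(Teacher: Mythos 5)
The paper does not prove this theorem; it is quoted directly as a special case of \cite[Proposition 5.2.1, Lemmas 7.2.1 and 7.2.3]{WINART}, so your proposal provides a derivation the paper does not give for part (1), while for part (2) you land on the same citation as the paper.

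Your argument for part (1) is essentially sound, but the appeal to \cref{T:TranslationonStandards}(1) is not quite right: that item requires ticks at both $j$ and $j+1$ in $\lambda$'s diagram, whereas part (1) here constrains only positions $j-1$ and $j$. If $\lambda$ has a dot at $j+1$, then \cref{T:TranslationonStandards}(2) applies instead, so $\Theta_{j+1}\Delta(\lambda)$ is not isomorphic to $\Delta(\mu)$ but rather has a nonsplit two-step standard filtration whose top quotient is $\Delta(\mu)$ (the $\mu''$ of that statement is exactly the $\mu$ here when $\lambda$ has a dot at $j+1$). Fortunately your argument only needs a surjection $\Theta_{j+1}P(\lambda)\twoheadrightarrow\Delta(\mu)\twoheadrightarrow L(\mu)$: combined with the fact from \cite[Theorem 7.1.1]{WINART} that $\Theta_{j+1}P(\lambda)$ is indecomposable projective or zero, and the uniqueness of the simple head of an indecomposable projective, this forces $\Theta_{j+1}P(\lambda)\cong P(\mu)$. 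You are also correct that part (2), where $\lambda$ has adjacent dots at $j-1$ and $j$, falls outside the configurations treated in \cref{T:TranslationonStandards}, so the filtration analysis must be imported from \cite{WINART} directly---which is exactly what the paper does for the theorem as a whole.
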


\subsection{The Algorithm}\label{S:TheAlgorithm}  In this subsection we describe a recursive algorithm which constructs projective resolutions for Kac modules. The algorithm is based on the one constructed by Brundan for $\gl (m|n)$ in \cite{brundan1}. First, for any $d \geq 0$ and any $\mu \in X^{+}(T)$ we explain how to construct an exact sequence of projective modules
\[
P_{d} \to P_{d-1} \to\dotsb \to P_{0} \to \Delta(\mu) \to 0.
\]  
We call such a sequence a partial projective resolution of length $d$.

The easy case is when $\mu$ is typical.  In this case $\Delta(\mu)=P(\mu)$ by \cite[Lemma 3.4.1]{WINART} and we take $P_{0}=P(\mu)$ and $P_{i}=0$ for all $i>0$.  For atypical $\mu \in X^{+}(T)$ we let $P_{0}=P(\mu)$ and let $P_{0} \to \Delta(\mu) \to 0$ be the canonical surjection.  Thus we have such a resolution for arbitrary $d$ when $\mu$ is typical and for $d=0$ when $\mu$ is atypical.  We construct longer sequences for atypical $d$ inductively as follows. Note that the terms of the sequences will be projective modules by \cite[Theorem 7.1.1]{WINART}

For the inductive step, we assume we have a partial projective resolution of length $d-1$ for all $\nu$ with $\atyp (\nu) = \atyp (\mu)$ and of length $d$ for all $\nu$ with $\atyp (\nu) < \atyp (\mu)$.  In particular, we assume $\atyp (\mu) \geq 1$ and we have a partial projective resolution of $\Delta(\mu)$ of length $d-1$.  We give a procedure for constructing a partial projective resolution of length $d$ from this data.

%\footnote{For any given $\mu$, there will be a finite sequence of Kac modules, beginning with a typical highest weight and ending with $\mu$, for which translation functors must be applied to the $d$th term of their partial projective resolutions to obtain $P_{d}$. Along the way, a finite number of modules with two-step thick Kac filtrations will be encountered, and we will need $P_{d-1}$ for their Kac submodules. These in turn require finite sequences of translation functor applications, along with a finite number of Kac submodules for which we need $P_{d-2}$. Eventually this process ends at degree 0. Ultimately only a finite amount of data is needed---there is no ``infinite descent!''}

\noindent \textbf{Step 1:} Choose the smallest $i$ so the weight diagram for $\mu$ is of the form 
\[ \mu =
\begin{tikzpicture}[scale=.75, color=\clr, baseline=0]
	\draw [thick ] (-3,0) to  (2,0);
	\draw (-1,0)\tick;
	\draw (0,0)\bdot;
	\draw (1,0)\bdot;
        \node at (-1,-0.5) {\scriptsize{$i-1$}};
        \node at (0,-0.5) {\scriptsize{$i$}};
        \node at (1,-0.5) {\scriptsize{$i+1$}};
	\node at (2.5,0) {$\cdots$};
	\node at (-3.5,0) {$\cdots$};
\end{tikzpicture}.
\]  Such an $i$ exists since $\atyp (\mu) \geq 1$. There are now two cases.

\smallskip
\noindent \textbf{Step 2a:} Suppose $\mu$ has no dot at $i-2$. Then set $j=i$. We have

\[ \mu =
\begin{tikzpicture}[scale=.75, color=\clr, baseline=0]
	\draw [thick ] (-3,0) to  (2,0);
	\draw (-2,0)\tick;
	\draw (-1,0)\tick;
	\draw (0,0)\bdot;
	\draw (1,0)\bdot;
        \node at (-2,-0.5) {\scriptsize{$j-2$}};
        \node at (-1,-0.5) {\scriptsize{$j-1$}};
        \node at (0,-0.5) {\scriptsize{$j$}};
        \node at (1,-0.5) {\scriptsize{$j+1$}};
	\node at (2.5,0) {$\cdots$};
	\node at (-3.5,0) {$\cdots$};
\end{tikzpicture},
\] and set 
\[ \nu = 
\begin{tikzpicture}[scale=.75, color=\clr, baseline=0]
	\draw [thick ] (-3,0) to  (2,0);
	\draw (-2,0)\tick;
	\draw (-1,0)\bdot;
	\draw (0,0)\tick;
	\draw (1,0)\bdot;
        \node at (-2,-0.5) {\scriptsize{$j-2$}};
        \node at (-1,-0.5) {\scriptsize{$j-1$}};
        \node at (0,-0.5) {\scriptsize{$j$}};
        \node at (1,-0.5) {\scriptsize{$j+1$}};
	\node at (2.5,0) {$\cdots$};
	\node at (-3.5,0) {$\cdots$};
\end{tikzpicture}.
\]  Since $\atyp (\nu) < \atyp (\mu)$, $\Delta(\nu)$ has a partial projective resolution of length $d$, say 
\[
Q_{d} \to \dotsb \to Q_{1} \to Q_{0} \to \Delta(\nu) \to 0.
\]  Applying the exact functor $\Theta_{j+1}$  to this sequence yields the exact sequence 
\[
\Theta_{j+1}Q_{d} \to \dotsb \to \Theta_{j+1}Q_{1} \to \Theta_{j+1}Q_{0} \to \Theta_{j+1}\Delta (\nu) \to 0.
\] Furthermore, by  \cref{T:TranslationonStandards} there is a short exact sequence 
\[
0 \to \Delta (\mu') \to \Theta_{j+1}\Delta(\nu) \to \Delta(\mu) \to 0
\] where 
\begin{equation*} \mu' =
\begin{tikzpicture}[scale=.75, color=\clr, baseline=0]
	\draw [thick ] (-3,0) to  (2,0);
	\draw (-2,0)\tick;
	\draw (-1,0)\bdot;
	\draw (0,0)\bdot;
	\draw (1,0)\tick;
        \node at (-2,-0.5) {\scriptsize{$j-2$}};
        \node at (-1,-0.5) {\scriptsize{$j-1$}};
        \node at (0,-0.5) {\scriptsize{$j$}};
        \node at (1,-0.5) {\scriptsize{$j+1$}};
	\node at (2.5,0) {$\cdots$};
	\node at (-3.5,0) {$\cdots$};
\end{tikzpicture}.
\end{equation*}  Since $\atyp (\mu ') \leq \atyp (\mu)$, $\Delta (\mu')$ has a partial projective resolution of length $d-1$, say 
\[
U_{d-1}  \to \dotsb \to U_{1} \to U_{0} \to \Delta (\mu') \to 0.
\]  Applying the Comparison Theorem \cite[Theorem 2.2.6]{Weibel} to the inclusion $i:\Delta (\mu') \hookrightarrow \Theta_{j+1}\Delta(\nu)$ yields a double complex 

\begin{equation*}
\begin{tikzpicture}[scale=.75, color=black, baseline=0]
%Top sequence
	\node at (-6,1) {$\cdots$};
	\node at (-5,1) {$\longrightarrow$};
	\node at (-3.5,1) {$U_{1}$};
	\node at (-2,1) {$\longrightarrow$};
	\node at (-0.5,1) {$U_{0}$};
	\node at (1,1) {$\longrightarrow$};
	\node at (2.75,1) {$\Delta(\mu')$};
	\node at (4.5,1) {$\longrightarrow$};
	\node at (5.25,1) {$0$};
%Bottom sequence
	\node at (-6,-1) {$\cdots$};
	\node at (-5,-1) {$\longrightarrow$};
	\node at (-3.5,-1) {$\Theta_{j+1}Q_{1}$};
	\node at (-2,-1) {$\longrightarrow$};
	\node at (-0.5,-1) {$\Theta_{j+1}Q_{0}$};
	\node at (1,-1) {$\longrightarrow$};
	\node at (2.75,-1) {$\Theta_{j+1}\Delta(\nu)$};
	\node at (4.5,-1) {$\longrightarrow$};
	\node at (5.25,-1) {$0$};
%Vertical arrows
      \draw [ thick, -> ]  (-3.5,.5) to (-3.5,-.5);
      \draw [ thick, -> ]  (-0.5,.5) to (-0.5,-.5);
      \draw [ thick, -> ]  (2.75,.5) to (2.75,-.5);
      \node at (3.1,0.1) {$i$};
\end{tikzpicture}
\end{equation*}

\noindent Taking the total complex yields an exact (by the Acyclic Assembly Lemma \cite[Lemma 2.7.3]{Weibel}) sequence 
\[
\Theta_{j+1}Q_{d} \oplus U_{d-1} \to \Theta_{j+1}Q_{d-1} \oplus U_{d-2} \to \dotsb \to \Theta_{j+1}Q_{0} \oplus \Delta (\mu') \to \Theta_{j+1}\Delta (\nu) \to 0.
\]  Factoring out $\Delta (\mu')$ from the last two terms yields a partial projective resolution of length $d$ for $\Delta (\mu)$, as desired.

\smallskip

\noindent \textbf{Step 2b:} Suppose $\mu$ has a dot at $i-2$. Then 

\[ \mu =
\begin{tikzpicture}[scale=.75, color=\clr, baseline=0]
	\draw [thick ] (-3,0) to  (2,0);
	\draw (-2,0)\bdot;
	\draw (-1,0)\tick;
	\draw (0,0)\bdot;
	\draw (1,0)\bdot;
        \node at (-2,-0.5) {\scriptsize{$i-2$}};
        \node at (-1,-0.5) {\scriptsize{$i-1$}};
        \node at (0,-0.5) {\scriptsize{$i$}};
        \node at (1,-0.5) {\scriptsize{$i+1$}};
	\node at (2.5,0) {$\cdots$};
	\node at (-3.5,0) {$\cdots$};
\end{tikzpicture},
\] and by our choice of $i$ the dots at and to the left of $i-2$ are isolated.  Choose the largest $j<i$ so that the weight diagram for $\mu$ looks locally like
\[ \mu = 
\begin{tikzpicture}[scale=.75, color=\clr, baseline=0]
	\draw [thick ] (-3,0) to  (2,0);
	\draw (-2,0)\tick;
	\draw (-1,0)\tick;
	\draw (0,0)\bdot;
	\draw (1,0)\tick;
        \node at (-2,-0.5) {\scriptsize{$j-2$}};
        \node at (-1,-0.5) {\scriptsize{$j-1$}};
        \node at (0,-0.5) {\scriptsize{$j$}};
        \node at (1,-0.5) {\scriptsize{$j+1$}};
	\node at (2.5,0) {$\cdots$};
	\node at (-3.5,0) {$\cdots$};
\end{tikzpicture}.
\] In other words, $j$ is the location of the rightmost dot at or left of $i-2$, which has no dot to its immediate right nor in the two positions to its immediate left. Set
\[ \nu = 
\begin{tikzpicture}[scale=.75, color=\clr, baseline=0]
	\draw [thick ] (-3,0) to  (2,0);
	\draw (-2,0)\tick;
	\draw (-1,0)\bdot;
	\draw (0,0)\tick;
	\draw (1,0)\tick;
        \node at (-2,-0.5) {\scriptsize{$j-2$}};
        \node at (-1,-0.5) {\scriptsize{$j-1$}};
        \node at (0,-0.5) {\scriptsize{$j$}};
        \node at (1,-0.5) {\scriptsize{$j+1$}};
	\node at (2.5,0) {$\cdots$};
	\node at (-3.5,0) {$\cdots$};
\end{tikzpicture}.
\] That is, the weight diagram of $\nu$ is obtained from that of $\mu$ by moving the dot at $j$ one position left, while leaving all other dots unchanged. Since $\atyp (\nu)=\atyp (\mu)$, then by assumption there is a partial projective resolution of length $d-1$ for $\Delta(\nu)$.  If there is a partial projective resolution of length $d$ for $\Delta(\nu)$, say 
\[
Q_{d} \to \dotsb \to Q_{0} \to \Delta(\nu) \to 0,
\]  applying $\Theta_{j+1}$ yields a partial projective resolution of length $d$ for $\Delta(\mu)$:
\[
\Theta_{j+1}Q_{d} \to \dotsb \to \Theta_{j+1}Q_{0} \to \Delta(\mu) \to 0,
\] as desired. If there is not (yet) a partial projective resolution of length $d$ for $\Delta(\nu)$, then repeat Step 2b, replacing $\mu$ with $\nu$.  After finitely many applications of Step 2b, we will have a dominant integral weight with atypicality equal to $\atyp (\mu)$ and of the form shown in Step 2a.  Applying the construction in Step 2a yields a partial projective resolution of length $d$ for that dominant integral weight and, by the repeated applications of Step 2b, it follows that $\Delta (\mu)$ has a partial projective resolution of length $d$.

Replacing $d$ by $d+1$, the same procedure constructs an exact sequence $P_{d+1} \to P_{d} \to\dotsb \to P_{0} \to \Delta(\mu) \to 0$, where we can always ensure that the terms of degree $\le d$  are \emph{the same} as the ones constructed before.    Now letting $d\to\infty$ we get a projective resolution  $P_{\bullet}(\mu) \to \Delta(\mu)$.  Summarizing, we have the following result.

\begin{theorem}\label{T:BBProjResolution}  Let $\mu \in X^{+}(T)$ be a dominant integral weight.  Then the Algorithm constructs a projective resolution of $\Delta(\mu)$:
\[
\dots \to P_{3}(\mu) \to P_{2}(\mu) \to P_{1}(\mu) \to P_{0}(\mu) \to  \Delta (\mu) \to 0.
\]  
\end{theorem}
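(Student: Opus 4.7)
The plan is to verify that the recursive construction described in \cref{S:TheAlgorithm} genuinely produces, at each recursive stage, an exact complex of projective modules, and then to argue that the sequences for successive values of $d$ agree on their common part so that the direct limit is well-defined.

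First I would set up the induction. The natural inductive hypothesis is a double induction: outer induction on $\atyp(\mu)$, and for fixed atypicality an inner induction on $d$. The base cases are already handled in the text: when $\mu$ is typical, $\Delta(\mu)=P(\mu)$ by \cite[Lemma 3.4.1]{WINART}; and for any atypical $\mu$ with $d=0$ the canonical surjection $P(\mu)\twoheadrightarrow\Delta(\mu)$ gives a partial resolution of length $0$. So the inductive step must only show that, given partial resolutions of length $d-1$ for all weights of atypicality $\atyp(\mu)$ and length $d$ for all weights of strictly smaller atypicality, we can build a partial resolution of length $d$ for $\Delta(\mu)$.

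Next, I would justify the two constructions of Step 2. In Step 2a, choose $j=i$ as in the text and pass to $\nu$ with $\atyp(\nu)<\atyp(\mu)$. By the outer inductive hypothesis, $\Delta(\nu)$ has a partial projective resolution $Q_\bullet$ of length $d$, and applying the exact functor $\Theta_{j+1}$ preserves exactness; by \cite[Theorem 7.1.1]{WINART} it also preserves projectivity (each $\Theta_{j+1}Q_k$ is a direct sum of indecomposable projectives or zero). The short exact sequence $0\to\Delta(\mu')\to\Theta_{j+1}\Delta(\nu)\to\Delta(\mu)\to0$ from \cref{T:TranslationonStandards} relates this complex to $\Delta(\mu)$; since $\atyp(\mu')\le\atyp(\mu)$ and $\mu'\ne\mu$, one checks (by a separate subinduction on the position of the dots if necessary) that $\Delta(\mu')$ has a partial projective resolution $U_\bullet$ of length $d-1$. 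I would then invoke the Comparison Theorem \cite[Theorem 2.2.6]{Weibel} to lift the inclusion $\Delta(\mu')\hookrightarrow\Theta_{j+1}\Delta(\nu)$ to a map of resolutions; the Acyclic Assembly Lemma \cite[Lemma 2.7.3]{Weibel} ensures that the total complex of this double complex is exact, and quotienting out the contractible piece $\Delta(\mu')\hookrightarrow\Theta_{j+1}\Delta(\nu)$ in the right-most columns leaves a partial projective resolution of $\Delta(\mu)$ of length $d$ whose terms $\Theta_{j+1}Q_k\oplus U_{k-1}$ are projective. In Step 2b, one just observes that $\Theta_{j+1}\Delta(\nu)\cong\Delta(\mu)$ by \cref{T:TranslationonStandards}(1) (the weight $\nu$ is chosen precisely so that this simple isomorphism holds), and that $\Theta_{j+1}$ carries any partial projective resolution of $\Delta(\nu)$ of length $d$ to one of $\Delta(\mu)$. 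Termination of the chain of Step 2b reductions follows because each application strictly decreases the position of the leftmost repeated dot (or an analogous well-founded quantity), so only finitely many iterations are required before Step 2a applies.

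Finally, I would address the compatibility of the resolutions constructed for $d$ and $d+1$. The algorithm is deterministic in its choice of $i$, $j$, and $\nu$ at each stage (these depend only on the combinatorics of $\mu$, not on $d$), and all the constructions (applying $\Theta_{j+1}$, forming mapping cones, invoking Comparison) can be carried out compatibly in $d$: the length-$(d+1)$ resolution agrees with the length-$d$ resolution in degrees $\le d-1$, with possibly a new differential added in degree $d$. This lets us pass to the limit $d\to\infty$ to obtain an honest projective resolution $P_\bullet(\mu)\to\Delta(\mu)$. The main subtlety, and what I expect to be the most delicate part of the write-up, is bookkeeping the double induction so that the weights $\mu'$ and $\nu$ which appear in Step 2a really are covered by the hypothesis (so that one is not circularly invoking the very statement being proved), together with checking that Step 2b terminates — these combinatorial checks on weight diagrams, rather than any homological algebra, carry the weight of the proof.
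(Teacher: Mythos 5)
Your proposal follows essentially the same route as the paper: a double induction on $\atyp(\mu)$ and $d$, with Step 2a handled by applying the exact functor $\Theta_{j+1}$ to a resolution of $\Delta(\nu)$, lifting the inclusion $\Delta(\mu')\hookrightarrow\Theta_{j+1}\Delta(\nu)$ via the Comparison Theorem, taking the total complex via the Acyclic Assembly Lemma, and factoring out $\Delta(\mu')$; Step 2b handled by directly applying $\Theta_{j+1}$ and iterating until Step 2a applies; and finally passing to the limit $d\to\infty$ by compatibility. One minor quibble: no ``separate subinduction'' is needed for $\Delta(\mu')$ in Step 2a, since $\atyp(\mu')\le\atyp(\mu)$ already places it under the stated double-induction hypothesis (length $d-1$ if atypicality is equal, length $d\ge d-1$ if strictly smaller), but this excess caution does not affect correctness.
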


\subsection{Key Properties} The following results summarize some key properties of the projective resolution constructed in \cref{T:BBProjResolution}.

\begin{lemma}\label{L:IsolatedDots} Suppose $P(\la)$ occurs as a summand of some $P_{d}(\mu)$, where $\mu$ has an isolated dot at position $k$. Then $\la$ has a left-isolated dot at $k$.
\end{lemma}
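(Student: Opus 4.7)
The plan is to induct on the recursive structure of the algorithm in \cref{S:TheAlgorithm}. The base case ($\mu$ typical or $d = 0$) is immediate since then $P_d(\mu) = P(\mu)$, so $\la = \mu$ and every isolated dot of $\mu$ is certainly left-isolated in $\la$. The inductive step splits according to whether the algorithm applies Step 2a or Step 2b to $\mu$.

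In each case the first task is to locate where $k$ can lie relative to the position $j$ used in that step of the algorithm. In Step 2a, $\mu$ has dots at $j$ and $j+1$ (hence not isolated) and ticks at $j-2, j-1$, which forces any isolated dot at position $k$ to satisfy $k \notin \{j-2, j-1, j, j+1, j+2\}$. In Step 2b, by the choice of $j$ the weight $\mu$ has an isolated dot at $j$ and ticks at $j-2, j-1, j+1$, so either $k = j$ or else the same disjointness $\{k-1, k, k+1\} \cap \{j-1, j, j+1\} = \emptyset$ holds.

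The ``far'' cases reduce quickly to induction. In Step 2a, both $\nu$ (in the $\Theta_{j+1} Q_\bullet$ branch) and $\mu'$ (in the $U_\bullet$ branch) differ from $\mu$ only at positions inside $\{j-1, j, j+1\}$, so each inherits the isolated dot at $k$; similarly for $\nu$ in Step 2b. Induction then ensures that summands of the corresponding resolutions have left-isolated dot at $k$. The $U_{d-1}$-summands contribute directly; for the $\Theta_{j+1} Q_\bullet$-summands the key input is that $\Theta_{j+1}$ sends an indecomposable projective to zero or again to an indecomposable projective \cite[Theorem 7.1.1]{WINART}, with the induced weight change local to positions $\{j-2, j-1, j\}$ (as recorded in the cases of \cref{T:TranslationonProjectives}). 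Since $\{k-1, k\} \cap \{j-2, j-1, j\} = \emptyset$ in the far case, the left-isolated dot at $k$ is preserved.

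The main obstacle is the single exceptional subcase $k = j$ of Step 2b, where the isolated dot at $k$ itself is moved by the algorithm. Here the weight $\nu$ has a left-isolated dot at $j-1$, and by induction every summand $P(\la)$ of $Q_d(\nu)$ has a left-isolated dot at $j-1$. I then need to verify that $\Theta_{j+1} P(\la)$, if nonzero, has a left-isolated dot at $j$. Splitting into whether $\la$ has a dot at $j$ (Case 2 of \cref{T:TranslationonProjectives}) or not (Case 1) and tracing the explicit weight change in each case — in both cases the resulting $\la'$ has a dot at $j$ and a tick at $j-1$ — yields the desired left-isolated dot at $j = k$, completing the induction.
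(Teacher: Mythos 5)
Your proof is correct and matches the paper's argument closely: the same induction on $\atyp(\mu)$ and $d$, the same split into Steps 2a and 2b of the Algorithm, the same reduction of the ``far'' isolated dots to the induction hypothesis, and the same treatment of the special case $k=j$ in Step 2b by tracing \cref{T:TranslationonProjectives}. One small slip worth noting: the disjointness $\{k-1,k,k+1\}\cap\{j-1,j,j+1\}=\emptyset$ you state for the Step 2b far case can fail (take $k=j+2$, which meets $j+1$), but your subsequent reasoning correctly relies only on the weaker facts that actually hold — namely that $\nu$ and $\mu$ differ only at $\{j-1,j\}$ and that $\Theta_{j+1}$ alters at most $\{j-2,j-1,j\}$ — so the proof still goes through.
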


\begin{proof}
The proof is by induction on $\atyp(\mu)$ together with $d$. If $\mu$ is typical or $d=0$, then the only possibility is $\la=\mu$ and degree $d=0$, and the result is clear. 

So assume $\atyp(\mu) \ge 1$ and $d \ge 1$, and that the result is true for all weights of smaller atypicality in degree $d$, and for all weights of the same atypicality in degree $d-1$. In particular, the result is true for the projective indecomposables $P(\tilde\la)$ occurring in the projective resolutions of the Kac modules of highest weights $\nu$ and $\mu'$ arising in Steps 2a and 2b in the algorithm used to form $P_{d}(\mu)$. 

In Step 2a, $\mu$ does not have an isolated dot at any coordinate $j-2, \dots, j+2$, and $\mu$ differs from $\nu$ only in coordinates $j-1$ and $j$. Thus if $\mu$ has an isolated dot at $k$, so does $\nu$, and either $k<j-2$ or $k>j+2$. By induction, for every projective indecomposable summand $P(\tilde\la)$ of $P_{d}(\nu)=Q_{d}$, $\tilde\la$ has a left-isolated dot at $k$, and also at $j-1$ since $\nu$ has an isolated dot at $j-1$. So by \cref{T:TranslationonProjectives}, $\Theta_{j+1} P(\tilde\la) = P(\la)$ where $\la$ differs from $\tilde\la$ at most in positions $j-2, j-1$, and $j$. Thus $\la$ still has a left-isolated dot at $k$.

Similarly, $\mu$ differs from $\mu'$ only in coordinates $j-1, j$, and $j+1$. Thus if $\mu$ has an isolated dot at $k$, then so does $\mu'$ (with the same restrictions on $k$ as in the previous paragraph). By induction, for each indecomposable summand $P(\la)$ of $P_{d-1}(\mu')=U_{d-1}$ (which become summands of $P_{d}(\mu)$), $\la$ has a left-isolated dot at $k$.

In Step 2b, a similar analysis as for Step 2a applies to any isolated dots of $\mu$ at $k>j+1$ or $k<j-2$. We need only consider the isolated dot at $j$ in $\mu$. Note that $\nu$ has an isolated dot at $j-1$, so by induction any $\tilde\la$ for which $P(\tilde\la)$ is a summand of $P_{d}(\nu)$ has a left-isolated dot at $j-1$. By \cref{T:TranslationonProjectives}, $\Theta_{j+1}P(\tilde\la)=P(\la)$ where $\la$ has a dot at $j$ and no dot at $j-1$, as required.
\end{proof}

\begin{lemma}\label{L:IndependenceOfOrder} The result of the Algorithm is independent of the order in which the steps are applied, provided at each step the diagrams of dots and ticks are locally as specified. In other words, the phrases ``the smallest $i$'' in Step 1 and ``the largest $j$'' in Step 2b can be replaced by ``any $i$'' and ``any $j$,'' respectively.
\end{lemma}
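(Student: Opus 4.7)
The plan is to proceed by induction on the pair $(\atyp(\mu), d)$ ordered lexicographically, with the base cases ($\mu$ typical, or $d=0$) immediate since the algorithm terminates without invoking Step 1 or Step 2b. For the inductive step, it suffices to show that two valid choices of $i$ in Step 1 (respectively, of $j$ in Step 2b) yield the same partial projective resolution in degrees $\leq d$; a general reordering can then be realized by a sequence of such elementary swaps between consecutive non-interfering choices.

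The key structural fact driving the argument is that translation functors at sufficiently separated positions commute on $\FF$: if $|k_{1}-k_{2}| \geq 3$, then $\Theta_{k_{1}+1} \circ \Theta_{k_{2}+1} \cong \Theta_{k_{2}+1} \circ \Theta_{k_{1}+1}$. This holds because each $\Theta_{k+1}$ is the composition of $V \otimes (-)$ with projection onto a generalized Casimir eigenspace, and when the positions are far apart the relevant eigenvalues are independent, so the projections commute past $V\otimes V$. I would verify that any two valid choices of $i$ in Step 1, or of $j$ in Step 2b, for a fixed $\mu$ are forced to lie at distance at least $3$ apart by the local dot-and-tick patterns required in those steps (in Step 2b the required three consecutive empty slots around $j-2, j-1, j+1$ force the separation directly; in Step 1 a similar spacing follows from the mandated configuration at $i-1, i, i+1$). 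Consequently, the translation functors invoked by the two candidate positions commute, and the weight-diagram modifications they perform affect disjoint regions of the diagram.

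Given the commutation, swapping two consecutive non-interfering steps produces the same complex up to canonical isomorphism, which then propagates through the recursion via the induction hypothesis. The main anticipated obstacle is verifying compatibility of this swap with the double complex construction in Step 2a: one must check that a distant translation $\Theta_{k+1}$ commutes, up to isomorphism of complexes, with the formation of the total complex built from $\Theta_{j+1}Q_{\bullet}$ and $U_{\bullet}$ via the Comparison Theorem \cite[Theorem 2.2.6]{Weibel} and the Acyclic Assembly Lemma. This reduces to the exactness of $\Theta_{k+1}$, its preservation of direct sums and kernels/cokernels, and the naturality of the Comparison Theorem applied to the inclusion $\Delta(\mu') \hookrightarrow \Theta_{j+1}\Delta(\nu)$. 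With these routine but slightly tedious verifications in hand, the induction hypothesis supplies the required invariance at each recursive call and completes the argument.
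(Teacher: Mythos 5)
Your high-level strategy matches the paper's: induct on $(\atyp(\mu),d)$, reduce to swapping two consecutive non-interfering algorithm steps, and invoke commutation of translation functors $\Theta_{i+1}\Theta_{j+1}\cong\Theta_{j+1}\Theta_{i+1}$ when the positions are well separated. (One small point: the paper cites \cite[Theorem 4.51]{WINART} for this commutation, and the relevant separation in the two-Step-2a case is $j-i\geq 4$, not $\geq 3$; your heuristic justification about Casimir eigenvalues is not a proof.) However, your treatment of the hard case --- two simultaneous Step 2a's --- characterizes the crux as a ``routine but slightly tedious'' naturality verification, and this genuinely understates what is needed.

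The difficulty is that Step 2a is not merely an application of a translation functor; it also splices in the resolution of the auxiliary weight $\mu'$ via the double complex. When both admissible steps are of type 2a, you cannot simply apply $\Theta_{i+1}$ to the whole $j$-construction and appeal to naturality: after applying one Step 2a the recursion lands on \emph{two} new weights ($\nu$ and $\mu'$), each of which then receives its own Step 2a at the other position, producing \emph{four} base weights and several spliced terms. The actual proof runs a ``diamond chase'' through $\mu$, $\nu_1$, $\nu_2$, $\tau$, $\mu_1'$, $\mu_2'$, $\nu_1'$, $\nu_2'$, $\sigma$, using six short exact sequences to show that each path yields
\[
P_{d}^{k}(\mu)\oplus P_{d-2}(\sigma)\ \cong\ \Theta_{i+1}\Theta_{j+1}P_{d}(\tau)\oplus P_{d-1}(\mu_{1}')\oplus P_{d-1}(\mu_{2}') ,\qquad k=1,2,
\]
so that the two candidate degree-$d$ terms agree after cancelling a common $P_{d-2}(\sigma)$. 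This cancellation is the content of the lemma; it does not fall out of exactness of $\Theta_{k+1}$ and naturality of the Comparison Theorem alone, since those give you compatibility \emph{within} a fixed path but not the identification \emph{across} paths. Your proposal, as written, leaves exactly this gap open: you would need to produce the extra short exact sequences relating $\mu_{1}'$, $\mu_{2}'$, $\nu_{1}'$, $\nu_{2}'$, and $\sigma$, and carry out the bookkeeping, before the induction hypothesis can be applied.
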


\begin{proof} The proof is again by induction on $\atyp(\mu)$ and $d$, with induction hypothesis similar to before. Assume $\atyp(\mu) \ge 1$ and there are two allowable steps in the algorithm, either of which can be applied to $\mu$. The most complicated situation is where both steps are of type 2a, so we will treat that case, and leave to the reader the easier cases where at least one of the moves is of type 2b. So we have the following picture:

\begin{equation*}
\begin{tikzpicture}[scale=.75, color=\clr, baseline=0]
% Top row
	\node at (-7.5,2) {$\mu=$};
	% Left half
	\draw [thick ] (-6,2) to  (-1,2);
	\node at (-6.5,2) {$\cdots$};
	\draw (-5,2)\tick;
	\draw (-4,2)\tick;
	\draw (-3,2)\bdot;
	\draw (-2,2)\bdot;
	\node at (-0.5,2) {$\cdots$};
        \node at (-5,1.5) {\scriptsize{$i-2$}};
        \node at (-4,1.5) {\scriptsize{$i-1$}};
        \node at (-3,1.5) {\scriptsize{$i$}};
        \node at (-2,1.5) {\scriptsize{$i+1$}};
	% Right half
	\draw [thick ] (0,2) to  (5,2);
	\draw (1,2)\tick;
	\draw (2,2)\tick;
	\draw (3,2)\bdot;
	\draw (4,2)\bdot;
	\node at (5.5,2) {$\cdots$};
        \node at (1,1.5) {\scriptsize{$j-2$}};
        \node at (2,1.5) {\scriptsize{$j-1$}};
        \node at (3,1.5) {\scriptsize{$j$}};
        \node at (4,1.5) {\scriptsize{$j+1$}};
% Middle row
	% Left half of \nu_{1}
	\node at (-11,0) {$\nu_{1}=$};
	\draw [thick ] (-10,0) to  (-6,0);
	\draw (-9,0)\bdot;
	\draw (-8,0)\tick;
	\draw (-7,0)\bdot;
        \node at (-9,-0.5) {\scriptsize{$i-1$}};
        \node at (-8,-0.5) {\scriptsize{$i$}};
        \node at (-7,-0.5) {\scriptsize{$i+1$}};
	\node at (-5.5,0) {$\cdots$};
	% Right half of \nu_{1}
	\draw [thick ] (-5,0) to  (-2,0);
	\draw (-4,0)\bdot;
	\draw (-3,0)\bdot;
        \node at (-4,-0.5) {\scriptsize{$j$}};
        \node at (-3,-0.5) {\scriptsize{$j+1$}};
	% Left half of \nu_{2}
	\node at (1,0) {$\nu_{2}=$};
	\draw [thick ] (2,0) to  (5,0);
	\draw (3,0)\bdot;
	\draw (4,0)\bdot;
        \node at (3,-0.5) {\scriptsize{$i$}};
        \node at (4,-0.5) {\scriptsize{$i+1$}};
	\node at (5.5,0) {$\cdots$};
	% Right half of \nu_{2}
	\draw [thick ] (6,0) to  (10,0);
	\draw (7,0)\bdot;
	\draw (8,0)\tick;
	\draw (9,0)\bdot;
        \node at (7,-0.5) {\scriptsize{$j-1$}};
        \node at (8,-0.5) {\scriptsize{$j$}};
        \node at (9,-0.5) {\scriptsize{$j+1$}};
% Bottom row
	\node at (-7.5,-2) {$\tau=$};
	% Left half
	\draw [thick ] (-6,-2) to  (-1,-2);
	\node at (-6.5,-2) {$\cdots$};
	\draw (-5,-2)\tick;
	\draw (-4,-2)\bdot;
	\draw (-3,-2)\tick;
	\draw (-2,-2)\bdot;
	\node at (-0.5,-2) {$\cdots$};
        \node at (-5,-2.5) {\scriptsize{$i-2$}};
        \node at (-4,-2.5) {\scriptsize{$i-1$}};
        \node at (-3,-2.5) {\scriptsize{$i$}};
        \node at (-2,-2.5) {\scriptsize{$i+1$}};
	% Right half
	\draw [thick ] (0,-2) to  (5,-2);
	\draw (1,-2)\tick;
	\draw (2,-2)\bdot;
	\draw (3,-2)\tick;
	\draw (4,-2)\bdot;
	\node at (5.5,-2) {$\cdots$};
        \node at (1,-2.5) {\scriptsize{$j-2$}};
        \node at (2,-2.5) {\scriptsize{$j-1$}};
        \node at (3,-2.5) {\scriptsize{$j$}};
        \node at (4,-2.5) {\scriptsize{$j+1$}};
\end{tikzpicture}
\end{equation*}
with $i \le j-4$. There are short exact sequences
\begin{gather*}
0 \to \Delta (\mu_{1}') \to \Theta_{i+1}\Delta(\nu_{1}) \to \Delta(\mu) \to 0\\
0 \to \Delta (\mu_{2}') \to \Theta_{j+1}\Delta(\nu_{2}) \to \Delta(\mu) \to 0\\
0 \to \Delta (\nu_{1}') \to \Theta_{j+1}\Delta(\tau) \to \Delta(\nu_{1}) \to 0\\
0 \to \Delta (\nu_{2}') \to \Theta_{i+1}\Delta(\tau) \to \Delta(\nu_{2}) \to 0\\
0 \to \Delta (\sigma) \to \Theta_{i+1}\Delta(\nu_{1}') \to \Delta(\mu_{2}') \to 0\\
0 \to \Delta (\sigma) \to \Theta_{j+1}\Delta(\nu_{2}') \to \Delta(\mu_{1}') \to 0
\end{gather*}
where $\mu_{1}'$ (resp.\ $\nu_{2}'$) is obtained from $\mu$ (resp.\ $\nu_{2}$) by shifting the dots at $i, i+1$ to $i-1, i$; similarly for $\mu_{2}'$ and $\nu_{1}'$ using $j$ in place of $i$; and $\sigma$ is obtained from $\mu$ by shifting both pairs of dots $i, i+1$ and $j, j+1$ one position left. For $k=1, 2$, let us denote by $P_{d}^{k}(\mu)$ the degree $d$ term of the projective resolution for $\Delta(\mu)$ obtained via the $k^{\text{th}}$ short exact sequence above, involving $\Delta(\nu_{k})$.

Following the left path from $\tau$ via $\nu_{1}$, and using the first and third short exact sequences, we have
\begin{gather*}
P_{d}^{1}(\mu) \cong \Theta_{i+1} P_{d}(\nu_{1}) \oplus P_{d-1}(\mu_{1}')\\
P_{d}(\nu_{1}) \cong \Theta_{j+1} P_{d}(\tau) \oplus P_{d-1}(\nu_{1}').
\end{gather*}
From the second of these, and exactness of the functors $\Theta_{s}$, % (\cite[Lemma 4.18]{WINART}),
$$
\Theta_{i+1} P_{d}(\nu_{1}) \cong \Theta_{i+1} \Theta_{j+1} P_{d}(\tau) \oplus \Theta_{i+1} P_{d-1}(\nu_{1}').
$$
But using the fifth short exact sequence,
$$
P_{d-1}(\mu_{2}') \cong \Theta_{i+1} P_{d-1}(\nu_{1}') \oplus P_{d-2}(\sigma),
$$
whence
$$
\Theta_{i+1} P_{d}(\nu_{1}) \oplus P_{d-2}(\sigma) \cong \Theta_{i+1} \Theta_{j+1} P_{d}(\tau) \oplus P_{d-1}(\mu_{2}') .
$$
Therefore
$$
P_{d}^{1}(\mu) \oplus P_{d-2}(\sigma) \cong \Theta_{i+1} \Theta_{j+1} P_{d}(\tau) \oplus P_{d-1}(\mu_{2}') \oplus P_{d-1}(\mu_{1}').
$$

Similarly, following the right path from $\tau$ via $\nu_{2}$, we obtain
$$
P_{d}^{2}(\mu) \oplus P_{d-2}(\sigma) \cong \Theta_{j+1} \Theta_{i+1} P_{d}(\tau) \oplus P_{d-1}(\mu_{1}') \oplus P_{d-1}(\mu_{2}') .
$$

Since $j-i \ge 4$, \cite[Theorem 4.51]{WINART} gives that $ \Theta_{i+1} \Theta_{j+1} \cong \Theta_{j+1} \Theta_{i+1}$. Hence
$
P_{d}^{1}(\mu) \cong P_{d}^{2}(\mu)
$
and this case of the independence of path is proved.
\end{proof}

\begin{remark}
Not only is the resolution constructed by the Algorithm unique, there is a ``canonical'' typical weight $\mu_{0}$ to which $\mu$ will be reduced by the Algorithm, independent of the order in which the steps are applied. Roughly speaking, $\mu_{0}$ is obtained by shifting adjacent dots left so there is one tick between them, and, recursively, moving left any other dot when a dot needs to move into the spot to its immediate right, so as never to create any new pairs of adjacent dots.
\end{remark}

\subsection{Allowable Functions}\label{SS:KeyFunctions}

Given $\mu, \la \in X^{+}(T)$ with $[\mu]=[a_{1}, \dotsc , a_{n}]$ and $[\la]=[b_{1}, \dotsc , b_{n}]$, we say a function is of \emph{type} $\mu \to \lambda$ and write $f: \mu \to \lambda$ if $f: \left\{a_{1}, \dotsc , a_{n} \right\} \to \left\{b_{1}, \dotsc , b_{n} \right\}$ is a bijection.  It is convenient to draw a function of type $\mu \to \lambda$ using weight diagrams as follows:

\pagebreak

\begin{equation*}
\begin{tikzpicture}[scale=.75, color=\clr, baseline=0]
%Top weight diagram
	\node at (-6.5,1) {$\mu=$};
	\draw [thick ] (-5,1) to  (7,1);
	\draw (-4,1)\tick;
	\draw (-3,1)\tick;
	\draw (-2,1)\tick;
	\draw (-1,1)\tick;
	\draw (0,1)\bdot;
	\draw (1,1)\bdot;
	\draw (2,1)\bdot;
	\draw (3,1)\bdot;
        \node at (0,1.5) {$0$};
	\node at (7.5,1) {$\cdots$};
	\node at (-5.5,1) {$\cdots$};
%Bottom weight diagram
	\node at (-6.5,-1) {$\lambda=$};
	\draw [thick ] (-5,-1) to  (7,-1);
	\draw (-4,-1)\bdot;
	\draw (-3,-1)\bdot;
	\draw (-2,-1)\tick;
	\draw (-1,-1)\tick;
	\draw (0,-1)\bdot;
	\draw (1,-1)\bdot;
	\draw (2,-1)\tick;
	\draw (3,-1)\tick;
        \node at (0,-1.5) {$0$};
	\node at (7.5,-1) {$\cdots$};
	\node at (-5.5,-1) {$\cdots$};
%Draw arrows given by the function
      \draw [ thick, -> ]  (0,.8) to (0,-.8);
      \draw [ thick,-> ]  (1,.8) to (1,-.8);
      \draw [ thick, ->, ]  (1.85,0.85) to  (-3.85,-.85); 
      \draw [ thick, ->, ]  (2.85,0.85) to  (-2.85,-.85); 
\ .
\end{tikzpicture}
\end{equation*}  As the reader may have guessed, this picture depicts the function $f(0)=0$, $f(1)=1$, $f(2)=-4$, and $f(3)=-3$.

Given a function $\tilde{f}: \tilde{\mu} \to \tilde{\la}$ there are three distinguished ``moves'' which construct a new function.  These moves are local in the sense that there may be dots and arrows other than those depicted, but they are assumed to be left unchanged by the move.

\smallskip
\noindent \textbf{Move 1 (Sliding Isolated Dots):} Say  $\tilde{f}: \tilde{\mu} \to \tilde{\lambda}$ is as follows:
\begin{equation*}
\begin{tikzpicture}[scale=.75, color=\clr, baseline=0]
%Top weight diagram
	\draw [thick ] (-3,1) to  (3,1);
	\node at (-4.5,1) {$\tilde{\mu}=$};
	\draw (-1,1)\tick;
	\draw (0,1)\bdot;
	\draw (1,1)\tick;
        \node at (-1,1.5) {\scriptsize{$j-2$}};
        \node at (0,1.5) {\scriptsize{$j-1$}};
        \node at (1,1.5) {\scriptsize{$j$}};
	\node at (3.5,1) {\scriptsize{$\cdots$}};
	\node at (-3.5,1) {$\cdots$};
%Bottom weight diagram
	\draw [thick ] (-3,-1) to  (3,-1);
	\node at (-4.5,-1) {$\tilde{\lambda}=$};
	\draw (-1,-1)\tick;
	\draw (0,-1)\bdot;
	\draw (1,-1)\tick;
        \node at (-1,-1.5) {\scriptsize{$j-2$}};
        \node at (0,-1.5) {\scriptsize{$j-1$}};
        \node at (1,-1.5) {\scriptsize{$j$}};
	\node at (3.5,-1) {$\cdots$};
	\node at (-3.5,-1) {$\cdots$};
%Draw arrows given by the function
      \draw [ thick, -> ]  (0,.8) to (0,-.8);
\end{tikzpicture}.
\end{equation*} Then by definition Move 1 yields the function $f: \mu \to \lambda$:
\begin{equation*}
\begin{tikzpicture}[scale=.75, color=\clr, baseline=0]
%Top weight diagram
	\draw [thick ] (-3,1) to  (3,1);
	\node at (-4.5,1) {$\mu=$};
	\draw (-1,1)\tick;
	\draw (0,1)\tick;
	\draw (1,1)\bdot;
        \node at (-1,1.5) {\scriptsize{$j-2$}};
        \node at (0,1.5) {\scriptsize{$j-1$}};
        \node at (1,1.5) {\scriptsize{$j$}};
	\node at (3.5,1) {$\cdots$};
	\node at (-3.5,1) {$\cdots$};
%Bottom weight diagram
	\draw [thick ] (-3,-1) to  (3,-1);
	\node at (-4.5,-1) {$\lambda=$};
	\draw (-1,-1)\tick;
	\draw (0,-1)\tick;
	\draw (1,-1)\bdot;
        \node at (-1,-1.5) {\scriptsize{$j-2$}};
        \node at (0,-1.5) {\scriptsize{$j-1$}};
        \node at (1,-1.5) {\scriptsize{$j$}};
	\node at (3.5,-1) {$\cdots$};
	\node at (-3.5,-1) {$\cdots$};
%Draw arrows given by the function
      \draw [ thick, -> ]  (1,.8) to (1,-.8);
\end{tikzpicture}.
\end{equation*}

\smallskip
\noindent \textbf{Move 2 (Leapfrogging):} Say  $\tilde{f}: \tilde{\mu} \to \tilde{\lambda}$ is as follows:
\begin{equation*}
\begin{tikzpicture}[scale=.75, color=\clr, baseline=0]
%Top weight diagram
	\draw [thick ] (-3,1) to  (2,1);
	\node at (-4.5,1) {$\tilde{\mu}=$};
	\draw (-2,1)\tick;
	\draw (-1,1)\bdot;
	\draw (0,1)\tick;
        \node at (-2,1.5) {\scriptsize{$j-2$}};
        \node at (-1,1.5) {\scriptsize{$j-1$}};
        \node at (0,1.5) {\scriptsize{$j$}};
	\node at (2.5,1) {$\cdots$};
	\node at (-3.5,1) {$\cdots$};
	\draw [thick ] (3,1) to  (5,1);
	\draw (4,1)\bdot;
        \node at (4,.5) {\scriptsize{$k$}};
%Bottom weight diagram
	\draw [thick ] (-3,-1) to  (2,-1);
	\node at (-4.5,-1) {$\tilde{\lambda}=$};
	\draw (-2,-1)\tick;
	\draw (-1,-1)\bdot;
	\draw (0,-1)\bdot;
        \node at (-2,-1.5) {\scriptsize{$j-2$}};
        \node at (-1,-1.5) {\scriptsize{$j-1$}};
        \node at (0,-1.5) {\scriptsize{$j$}};
	\node at (2.5,-1) {$\cdots$};
	\node at (-3.5,-1) {$\cdots$};
%Draw arrows given by the function
      \draw [ thick, -> ]  (-1,.8) to (-1,-.8);
      \draw [ thick, -> ]  (3.85,0.85) to  (0.15,-.85); 
\end{tikzpicture}.
\end{equation*} Then by definition Move 2 yields the function $f: \mu \to \lambda$:
\begin{equation*}
\begin{tikzpicture}[scale=.75, color=\clr, baseline=0]
%Top weight diagram
	\draw [thick ] (-3,1) to  (2,1);
	\node at (-4.5,1) {$\mu=$};
	\draw (-2,1)\tick;
	\draw (-1,1)\tick;
	\draw (0,1)\bdot;
        \node at (-2,1.5) {\scriptsize{$j-2$}};
        \node at (-1,1.5) {\scriptsize{$j-1$}};
        \node at (0,1.5) {\scriptsize{$j$}};
	\node at (2.5,1) {$\cdots$};
	\node at (-3.5,1) {$\cdots$};
	\draw [thick ] (3,1) to  (5,1);
	\draw (4,1)\bdot;
        \node at (4,.5) {\scriptsize{$k$}};
%Bottom weight diagram
	\draw [thick ] (-3,-1) to  (2,-1);
	\node at (-4.5,-1) {$\lambda=$};
	\draw (-2,-1)\bdot;
	\draw (-1,-1)\tick;
	\draw (0,-1)\bdot;
        \node at (-2,-1.5) {\scriptsize{$j-2$}};
        \node at (-1,-1.5) {\scriptsize{$j-1$}};
        \node at (0,-1.5) {\scriptsize{$j$}};
	\node at (2.5,-1) {$\cdots$};
	\node at (-3.5,-1) {$\cdots$};
%Draw arrows given by the function
      \draw [ thick, -> ]  (0,.8) to (0,-.8);
      \draw [ thick, -> ]  (3.85,0.85) to  (-1.85,-.85); 
\end{tikzpicture}.
\end{equation*}

\smallskip
\pagebreak
\noindent \textbf{Move 3 (Sliding an Isolated Pair):} Say  $\tilde{f}: \tilde{\mu} \to \tilde{\lambda}$ is as follows:

\begin{equation*}
\begin{tikzpicture}[scale=.75, color=\clr, baseline=0]
%Top weight diagram
	\draw [thick ] (3,1) to  (8,1);
	\node at (-5.5,1) {$\tilde{\mu}=$};
	\draw (4,1)\tick;
	\draw (5,1)\bdot;
	\draw (6,1)\bdot;
	\draw (7,1)\tick;
        \node at (5,1.5) {\scriptsize{$j-1$}};
        \node at (6,1.5) {\scriptsize{$j$}};
        \node at (7,1.5) {\scriptsize{$j+1$}};
	\node at (2.5,1) {$\cdots$};
	\node at (8.5,1) {$\cdots$};
%Bottom weight diagram
	\draw [thick ] (-4,-1) to  (-2,-1);
	\draw [thick ] (-1,-1) to  (1,-1);
	\node at (-5.5,-1) {$\tilde{\lambda}=$};
	\draw (-3,-1)\bdot;
	\draw (0,-1)\bdot;
        \node at (-3,-1.5) {\scriptsize{$k$}};
        \node at (0,-1.5) {\scriptsize{$\ell$}};
	\node at (1.5,-1) {$\cdots$};
	\node at (-1.5,-1) {$\cdots$};
	\node at (-4.5,-1) {$\cdots$};
%Draw arrows given by the function
      \draw [ thick, -> ]  (4.85,0.85) to  (-2.85,-.85); 
      \draw [ thick, -> ]  (5.85,0.85) to (0.15,-.85); 
\end{tikzpicture}.
\end{equation*}
Then by definition Move 3 yields the function $f: \mu \to \lambda$:
\begin{equation*}
\begin{tikzpicture}[scale=.75, color=\clr, baseline=0]
%Top weight diagram
	\draw [thick ] (3,1) to  (8,1);
	\node at (-5.5,1) {$\mu=$};
	\draw (4,1)\tick;
	\draw (5,1)\tick;
	\draw (6,1)\bdot;
	\draw (7,1)\bdot;
        \node at (5,1.5) {\scriptsize{$j-1$}};
        \node at (6,1.5) {\scriptsize{$j$}};
        \node at (7,1.5) {\scriptsize{$j+1$}};
	\node at (2.5,1) {$\cdots$};
	\node at (8.5,1) {$\cdots$};
%Bottom weight diagram
	\draw [thick ] (-4,-1) to  (-2,-1);
	\draw [thick ] (-1,-1) to  (1,-1);
	\node at (-5.5,-1) {$\lambda=$};
	\draw (-3,-1)\bdot;
	\draw (0,-1)\bdot;
        \node at (-3,-1.5) {\scriptsize{$k$}};
        \node at (0,-1.5) {\scriptsize{$\ell$}};
	\node at (1.5,-1) {$\cdots$};
	\node at (-1.5,-1) {$\cdots$};
	\node at (-4.5,-1) {$\cdots$};
%Draw arrows given by the function
      \draw [ thick, -> ]  (5.85,0.85) to (-2.85,-.85); 
      \draw [ thick, -> ]  (6.85,0.85) to  (0.15,-.85); 
\end{tikzpicture}.
\end{equation*}

Given dominant integral weights $\mu, \lambda \in X^{+}(T)$, we call a function $f: \mu \to \lambda$ an \emph{allowable function of type $\mu \to \lambda$} if $f$ can be obtained from the identity function $\operatorname{Id}_{\gamma}: \gamma \to \gamma$ for some typical dominant integral weight $\gamma \in X^{+}(T)$ using a finite sequence of Moves 1, 2, and 3. It is clear inductively from the description of the Moves that an allowable function is nonincreasing; in other words, if $f: \mu \to \lambda$ is an allowable function, then $\mu \le \la$ in the partial order of \cref{SS:WeightsandWeightDiagrams}.

\subsection{Leapfrogging}  We now explain how the combinatorics of allowable functions describes which projective indecomposables appear in the resolution of $\Delta(\mu)$ and in which degrees they appear.

\begin{theorem}\label{T:Projectivesandfunctions}  Let $\mu \in X^{+}(T)$ be a dominant integral weight.  Then $P(\lambda)$ appears in the projective resolution for $\Delta(\mu)$ constructed in \cref{T:BBProjResolution} if and only if there is an allowable function $f: \mu \to \lambda$. In particular, $\mu\le\la$ in the partial order of \cref{SS:WeightsandWeightDiagrams}.
\end{theorem}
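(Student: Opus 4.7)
The plan is to prove both implications simultaneously by induction following the well-founded recursion of the algorithm in \cref{T:BBProjResolution}, ordered lexicographically by $(\atyp(\mu), d)$ and refined within equal atypicality by the position of the leftmost atypical pair in $\mu$ (so that each application of Step 2b reduces to a ``strictly earlier'' weight). The base case is $\mu$ typical: then $P_{0}(\mu) = P(\mu)$ and $P_{d}(\mu) = 0$ for $d \ge 1$, while the only allowable function of source $\mu$ is $\mathrm{Id}_{\mu}$ itself, giving $\la = \mu$ on both sides.

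For the forward direction, I trace through whichever step of the algorithm produced $P(\la)$ as a summand of $P_{d}(\mu)$. If Step 2b was used, then $P_{d}(\mu) \cong \Theta_{j+1} P_{d}(\nu)$, so $P(\la) = \Theta_{j+1} P(\tilde\la)$ for some summand $P(\tilde\la)$ of $P_{d}(\nu)$; by induction there is an allowable $\tilde{f}: \nu \to \tilde\la$. Since $\nu$ has an isolated dot at $j-1$, \cref{L:IsolatedDots} forces $\tilde\la$ to have a left-isolated dot at $j-1$, which supplies the local diagram needed for Move 1; applying Move 1 to $\tilde{f}$ yields the desired $f: \mu \to \la$. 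Step 2a produces two kinds of summands: those from $\Theta_{j+1} P_{d}(\nu)$ (with $\atyp(\nu) < \atyp(\mu)$) correspond to Move 2 (leapfrogging), while those from $P_{d-1}(\mu')$ correspond to Move 3 applied to an allowable function $f': \mu' \to \la$ provided by induction on $d$. In each case \cref{T:TranslationonProjectives} prescribes the local change effected by $\Theta_{j+1}$, and this change matches the right-hand side picture of the corresponding move.

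For the reverse direction, I induct on the number of moves used to build $f: \mu \to \la$ from some identity $\mathrm{Id}_{\gamma}$. Undoing the last move produces a simpler allowable function $\tilde{f}: \tilde\mu \to \tilde\la$ for which $P(\tilde\la) \mid P_{\tilde d}(\tilde\mu)$ by induction. The algorithmic counterpart of the last move (Step 2b for Move 1; Step 2a for Moves 2 and 3) then inserts $P(\la)$ as a summand of the appropriate term of the resolution of $\Delta(\mu)$. The ``in particular $\mu \le \la$'' clause is independent of the algorithm: each of Moves 1, 2, 3 weakly shifts source dots rightward relative to target dots (as can be read off directly from the three local pictures), so by induction on the number of moves every allowable function is nonincreasing in the partial order of \cref{SS:WeightsandWeightDiagrams}.

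The chief technical obstacle is the rigidity needed to apply the moves in the forward direction. Move 1, for example, requires not only that $\tilde\mu$ and $\tilde\la$ both have dots at $j-1$ (supplied by \cref{L:IsolatedDots}) but that the bijection $\tilde{f}$ actually maps the $\tilde\mu$-dot at $j-1$ to the $\tilde\la$-dot at $j-1$ (the vertical arrow in Move 1's template). This rigidity does not follow directly from \cref{L:IsolatedDots}, and requires strengthening the inductive hypothesis to control how allowable functions act on isolated source dots. Analogous, more intricate rigidity statements are needed to justify invocations of Moves 2 and 3, and one must also verify that the local configurations appearing in each case of the algorithm fit the template of exactly one of the three moves, rather than leaving ambiguity. \cref{L:IndependenceOfOrder} will likely be used to resolve residual ambiguity and to ensure that the correspondence between moves and algorithmic summands is well-defined.
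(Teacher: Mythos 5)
Your overall architecture matches the paper's: induct along the algorithm's recursion, use \cref{L:IsolatedDots} to control the local picture around isolated dots, read off the effect of $\Theta_{j+1}$ from \cref{T:TranslationonProjectives}, and invoke \cref{L:IndependenceOfOrder} to tame order-of-moves ambiguity in the converse. The closing observation about the partial order also matches what the paper does.

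However, there is a genuine error in how you pair algorithm steps with Moves. You assert that Step 2b corresponds to Move 1 and that the $\Theta_{j+1}P_{d}(\nu)$ summands in Step 2a correspond to Move 2. That dichotomy is wrong: whether a summand of $\Theta_{j+1}Q_{d}$ arises via Move 1 or Move 2 is governed not by which step (2a or 2b) produced $\nu$ but by the \emph{target} weight $\tilde\la$. In both Steps 2a and 2b, $\nu$ has an isolated dot at $j-1$, \cref{L:IsolatedDots} gives $\tilde\la$ a left-isolated dot at $j-1$ (so no dot at $j-2$), and then one must split into two cases: if $\tilde\la$ has no dot at $j$ you are in the Move 1 template and \cref{T:TranslationonProjectives}(1) moves the dot at $j-1$ to $j$; if $\tilde\la$ does have a dot at $j$ you are in the Move 2 template and \cref{T:TranslationonProjectives}(2) moves the dot at $j-1$ to $j-2$. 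Under your pairing, a Step 2b summand with $\tilde\la$ having a dot at $j$ would be processed as Move 1, but $\Theta_{j+1}$ actually shifts that dot left, not right — so the $f$ you construct would not correspond to the actual $P(\la)$ produced. This would make the forward direction fail in those cases. Only the $U_{d-1}$ summands of Step 2a are tied to a single Move (Move 3), and that part of your pairing is correct.

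On the ``rigidity'' concern you flag in the last paragraph: the requirement that $\tilde f(j-1)=j-1$ is handled in the paper by combining \cref{L:IsolatedDots} with the observation that isolated source dots stay fixed pointwise (only Move 1 touches an isolated source dot, and it preserves the vertical arrow). This is not an additional strengthening of the induction you would need to invent; it follows from a short inspection of the three Moves, so that worry is ultimately unfounded, but flagging it shows you were reading the Moves carefully — the actual problem is elsewhere, in the step-to-move dichotomy described above.
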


\begin{proof}  This is a combinatorial reformulation of the algorithm. The result is clearly true when $\mu$ is typical. So assume $\atyp (\mu) \geq 1$, and that the result is true inductively as in \cref{S:TheAlgorithm}. Let us first prove the ``only if'' assertion.

First, suppose that $P_{d}(\mu)$ is obtained as in Step 2b of the algorithm. Then $P_{d} = \Theta_{j+1} Q_{d}$. So $P(\la) = \Theta_{j+1} P(\tilde\la)$ for some summand $P(\tilde\la)$ of $Q_{d}$ for $\Delta(\nu)$. By induction, there is an allowable function $\tilde f : \nu \to \tilde\la$. Because $j-1$ is an isolated dot in $\nu$, it follows from the description of the Moves, and by \cref{L:IsolatedDots}, that $\tilde f(j-1) = j-1$ and $\tilde\la$ has no dot at $j-2$. There are two cases, according to whether or not $\tilde\la$ has a dot at $j$. 

If $\tilde\la$ does not have a dot at $j$, then we are in the setting of Move 1. According to \cref{T:TranslationonProjectives} (1), $\Theta_{j+1} P(\tilde\la) = P(\la)$, where $\la$ is obtained from $\tilde\la$ by moving the dot from $j-1$ to $j$. Note that $\mu$ is obtained from $\nu$ by exactly the same procedure. Thus we construct $f$ from $\tilde f$ via Move 1.

On the other hand, if $\tilde\la$ does have a dot at $j$, then we are in the setting of Move 2. (Notice that since $\nu$ does not have a dot at $j$, and since every allowable function is nonincreasing, there must exist $k>j$ with $\tilde f(k)=j$ as in the Move 2 diagram.) By \cref{T:TranslationonProjectives} (2), $\Theta_{j+1} P(\tilde\la) = P(\la)$, where $\la$ is obtained from $\tilde\la$ by moving the dot from $j-1$ to $j-2$. On the other hand, $\mu$ is still obtained from $\nu$ by moving the dot from $j-1$ to $j$. Thus we construct $f$ from $\tilde f$ via Move 2.

Second, suppose that $P_{d}(\mu)$ is obtained as in Step 2a of the algorithm. Then $P_{d} = \Theta_{j+1} Q_{d} \oplus U_{d-1}$. The analysis of the summands coming from $\Theta_{j+1} Q_{d}$ is exactly the same as in the first case, since $\nu$ again has an isolated dot at $j-1$. The summands coming from $U_{d-1}$ are as pictured in Move 3, since $\mu$ is obtained from $\mu'$ by sliding an isolated pair of dots at $j-1$ and $j$ one step right (to $j$ and $j+1$), whereas the indecomposable projective $P(\la)$ does not change. Thus we obtain $f$ from $\tilde f$ via Move 3.

The converse follows largely by the same line of argument, using induction, the fact that it is trivially true when $\mu$ is typical, and the fact that Moves 1, 2, and 3 correspond precisely to the way that summands $P(\la)$ of $P_{d}(\mu)$ arise in the algorithm. There is one subtle point, however, and that is that the algorithm steps are to be carried out in a very specific order, whereas an allowable function could be constructed via a sequence of Moves 1, 2, and 3 in \emph{any} order. The fact that the order does not matter follows from \cref{L:IndependenceOfOrder}.

The last statement of the theorem follows from the observation at the end of \cref{SS:KeyFunctions}.
\end{proof}

\begin{defn}\label{D:leapfrogging} Let $\mu, \lambda \in X^{+}(T)$ with $[\mu]=[a_{1},\dotsc ,a_{n}]$.  Let $f:\mu \to \lambda$ be a function of type $\mu \to \lambda$.  If $1 \leq i < j \leq n$, but $f(a_{i}) > f(a_{j})$, then we call the pair $(i,j)$ a \emph{leapfrogging pair for $f$}. 

Given $f: \mu \to \lambda$, let $\mathtt{L}(f: \mu \to \lambda)$
be the total number of leapfrogging pairs for $f: \mu \to \lambda$.  That is, 
\begin{equation*}
\mathtt{L}(f) =\mathtt{L}(f: \mu \to \lambda)=\sum_{j=2}^{n} \# \left\{ i < j \mid f(a_{i}) > f(a_{j}) \right\}.
\end{equation*}
\end{defn}
\noindent In terms of the pictorial representation of $f:\mu \to \lambda$, if the arrows are drawn as straight lines so that at most two lines intersect at each point, then $\mathtt{L}(f)$ is simply the number of crossings.  For example, if $f$ is the function drawn at the beginning of \cref{SS:KeyFunctions}, then $\mathtt{L}(f)=4$.

Given $\mu,\lambda \in X^{+}(T)$ and an allowable $f: \mu \to \lambda$, the previous result shows $P(\lambda)$ appears as a direct summand of some term in the projective resolution of $\Delta(\mu)$ constructed in \cref{T:BBProjResolution}.  The next theorem sharpens this result.

\begin{theorem}\label{T:degree}  Given $\mu,\lambda \in X^{+}(T)$, then $P(\lambda)$ occurs as a direct summand of $P_{d}(\mu)$ if and only if there exists an allowable function $f: \mu \to \lambda$ such that 
\[
d=  \tfrac{1}{2}\ell(\lambda, \mu) - \mathtt{L}(f),
\]  where $\ell$ is the relative length function from \cref{E:relativelengthdef}. Moreover, the number of times $P(\lambda)$ appears as a direct summand of $P_{d}(\mu)$ equals the number of such allowable functions.
\end{theorem}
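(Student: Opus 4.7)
The plan is to prove this by induction on $\atyp(\mu)$ together with $d$, refining the argument of \cref{T:Projectivesandfunctions}. That theorem already establishes the bijection between summands $P(\la)$ (with multiplicity) and allowable functions $f:\mu \to \la$, so the new content here is tracking the degree. The base case $\mu$ typical is immediate: the unique allowable function is $f = \operatorname{Id}_\mu$, for which $\ell(\mu,\mu) = 0$ and $\mathtt{L}(f) = 0$, matching $d=0$; no other summand appears in the resolution of $\Delta(\mu)$.

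The key technical step is a direct local calculation of how $\tfrac{1}{2}\ell(\la,\mu) - \mathtt{L}(f)$ transforms under each Move. For Move 1, both $\tilde\mu \to \mu$ and $\tilde\la \to \la$ shift a dot from $j-1$ to $j$ in parallel; each $\ell_t$ is unchanged, and the sorted positions of all dots stay the same, so the induced permutation and hence $\mathtt{L}$ are unchanged. For Move 2, $\tilde\mu \to \mu$ shifts a dot $j-1 \to j$ while $\tilde\la \to \la$ shifts a dot $j-1 \to j-2$: summing over $t$, one finds $\ell(\la,\mu) - \ell(\tilde\la,\tilde\mu) = 2$ (with $+1$ contributions at each of $t=j-2$ and $t=j-1$), and the two altered arrows $(j-1 \to j-1)$ and $(k \to j)$ become $(j \to j)$ and $(k \to j-2)$. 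Because the source positions $i_1 < i_k$ of the affected dots and the target positions $t_1 < t_1+1$ stay the same in the sorted lists, the only combinatorial change among these two arrows is that their target assignments are swapped, producing exactly one new inversion; interactions with every other arrow cancel, since every other target position lies strictly below $t_1$ or strictly above $t_1+1$ in both $\tilde\la$ and $\la$. Hence $\mathtt{L}(f) = \mathtt{L}(\tilde f) + 1$ and $\tfrac{1}{2}\ell - \mathtt{L}$ is preserved. For Move 3, only $\mu$ changes (the pair shifts $(j-1,j) \to (j, j+1)$) while $\la$ is fixed: $\ell$ increases by $2$, $\mathtt{L}$ is unchanged (the sorted positions of the pair and all other dots are the same), so $\tfrac{1}{2}\ell - \mathtt{L}$ increases by $1$.

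Given these Move invariants, the theorem follows by induction mirroring the algorithm. In Step 2b, $P_d(\mu) = \Theta_{j+1} P_d(\nu)$ preserves the homological degree, and by \cref{T:TranslationonProjectives} each summand $P(\tilde\la)$ of $P_d(\nu)$ yields $P(\la) = \Theta_{j+1} P(\tilde\la)$ via Move 1 or Move 2 (distinguished by whether $\tilde\la$ has a dot at $j$), with the formula preserved. In Step 2a, the extra summands from $P_{d-1}(\mu')$ correspond to Move 3 applications, which shift both the resolution degree and the formula $\tfrac{1}{2}\ell - \mathtt{L}$ in tandem by $+1$. Multiplicities match by the same induction together with \cref{L:IndependenceOfOrder}, which ensures the bijection between summands and allowable functions is independent of the order in which the algorithm's choices are made, exactly as in the proof of \cref{T:Projectivesandfunctions}.

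The main obstacle I anticipate is the inversion count for Move 2: it is tempting to conclude that $\mathtt{L}$ is unchanged, since in the local picture the two ``swapped'' arrows do not visibly cross. The correct observation is that the swap is a purely combinatorial effect on the matching of sorted source and target positions---not a topological crossing---and produces exactly one new inversion. Without this subtlety the formula would fail for Move 2 by exactly $+1$, and the overall degree bookkeeping would collapse.
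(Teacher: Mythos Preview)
Your proposal is correct and follows essentially the same approach as the paper: both argue by induction on $\atyp(\mu)$ and $d$, analyzing how $\tfrac{1}{2}\ell(\la,\mu)-\mathtt{L}(f)$ transforms under each of Moves~1,~2,~3 and matching this to the degree shifts in Steps~2a and~2b of the Algorithm. Your treatment of the Move~2 inversion count is in fact more detailed than the paper's (which simply asserts $\mathtt{L}(f)=\mathtt{L}(\tilde f)+1$), and your justification via sorted-index swapping is sound.
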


\begin{proof} \cref{T:Projectivesandfunctions} shows there is a bijection between functions $f$ of type $\mu\to\la$ and summands $P(\la)$ (counted with multiplicities) in $P_{\bullet}(\mu)$. It remains to check the degree formula. This is proved by induction much as in other proofs, with the base case $f=\operatorname{Id}_{\mu}:\mu\to\mu$ correctly giving degree $d=0$.

For the inductive step, we analyze projective indecomposable summands $P(\la)$ arising as in the proof of \cref{T:Projectivesandfunctions} via each of Moves 1, 2, and 3 in turn,  assuming inductively that $P(\tilde\la)$ occurs in degree $\tilde d$ of $P_{\bullet}(\tilde\mu)$, where $\tilde d$ is given by the claimed formula using $\tilde f$. 

In the case of Move 1, we have $\sum_{t}\mu_{t}=1+\sum_{t}\tilde\mu_{t}$ and $\sum_{t}\la_{t}=1+\sum_{t}\tilde\la_{t}$, so $\ell(\la,\mu)=\ell(\tilde\la,\tilde\mu)$. Also $\mathtt{L}(f)=\mathtt{L}(\tilde f)$. So the claimed formula gives $d=\tilde d$ as desired: recall from the proof of \cref{T:Projectivesandfunctions} that Move 1 corresponds to summands coming from $\Theta_{j+1}Q_{d}\hookrightarrow P_{d}$ in the Algorithm.

In the case of Move 2, $\sum_{t}\mu_{t}=1+\sum_{t}\tilde\mu_{t}$ whereas $\sum_{t}\la_{t}=-1+\sum_{t}\tilde\la_{t}$, so $\ell(\la,\mu)=\ell(\tilde\la,\tilde\mu)+2$. On the other hand $\mathtt{L}(f)=1+\mathtt{L}(\tilde f)$. So the claimed formula gives $d=\tilde d$ as desired: Move 2 also corresponds to  summands coming from $\Theta_{j+1}Q_{d}\hookrightarrow P_{d}$ in the Algorithm.

Finally for Move 3, $\sum_{t}\mu_{t}=2+\sum_{t}\tilde\mu_{t}$ and $\sum_{t}\la_{t}=\sum_{t}\tilde\la_{t}$, so $\ell(\la,\mu)=\ell(\tilde\la,\tilde\mu)+2$. And $\mathtt{L}(f)=\mathtt{L}(\tilde f)$. So the claimed formula gives $d=\tilde d + 1$ as desired: Move 3  corresponds to summands coming from $U_{d-1}\hookrightarrow P_{d}$ in Step 2a of the Algorithm.
\end{proof}

\begin{example}\label{R:examples}
(1)  It can happen that the same indecomposable projective $P(\la)$ occurs in two successive degrees $P_{d}$ and $P_{d+1}$. Thus the resolution $P_{\bullet}(\mu)$ does not satisfy the ``parity vanishing'' condition expected if it were a minimal resolution and if the highest weight category $\F(\fg,\fg_{\0})$ were to have a Kazhdan-Lusztig Theory in the sense of Cline-Parshall-Scott \cite{CPS:92,CPS:93}. A small example is $\mu=[3,4,5,7,8],\ \la=[0,1,3,5,6]$. Two allowable functions $f:\mu\to\la$ are pictured here:
\begin{equation*}
\begin{tikzpicture}[scale=.75, color=\clr, baseline=0]
%Top weight diagram
	\node at (-6.5,1) {$\mu=$};
	\draw [thick ] (-5,1) to  (7,1);
	\draw (-4,1)\tick;
	\draw (-3,1)\tick;
	\draw (-2,1)\tick;
	\draw (-1,1)\bdot;
	\draw (0,1)\bdot;
	\draw (1,1)\bdot;
	\draw (2,1)\tick;
	\draw (3,1)\bdot;
	\draw (4,1)\bdot;
        \node at (-1,1.5) {$3$};
        \node at (4,1.5) {$8$};
	\node at (7.5,1) {$\cdots$};
	\node at (-5.5,1) {$\cdots$};
%Bottom weight diagram
	\node at (-6.5,-1) {$\lambda=$};
	\draw [thick ] (-5,-1) to  (7,-1);
	\draw (-4,-1)\bdot;
	\draw (-3,-1)\bdot;
	\draw (-2,-1)\tick;
	\draw (-1,-1)\bdot;
	\draw (0,-1)\tick;
	\draw (1,-1)\bdot;
	\draw (2,-1)\bdot;
	\draw (3,-1)\tick;
	\draw (4,-1)\tick;
        \node at (-4,-1.5) {$0$};
        \node at (2,-1.5) {$6$};
	\node at (7.5,-1) {$\cdots$};
	\node at (-5.5,-1) {$\cdots$};
%Draw arrows given by the function
      \draw [ thick, -> ]  (-1,.8) to (-1,-.8);
      \draw [ thick, -> ]  (-0.15,0.85) to (-3.85,-.85); 
      \draw [ thick, -> ]  (0.85,0.85) to  (-2.85,-.85); 
      \draw [ thick, -> ]  (2.9,0.8) to  (1.1,-.8);
      \draw [ thick, -> ]  (3.9,0.8) to  (2.1,-.8);
\end{tikzpicture}
\end{equation*}
\begin{equation*}
\begin{tikzpicture}[scale=.75, color=\clr, baseline=0]
%Top weight diagram
	\node at (-6.5,1) {$\mu=$};
	\draw [thick ] (-5,1) to  (7,1);
	\draw (-4,1)\tick;
	\draw (-3,1)\tick;
	\draw (-2,1)\tick;
	\draw (-1,1)\bdot;
	\draw (0,1)\bdot;
	\draw (1,1)\bdot;
	\draw (2,1)\tick;
	\draw (3,1)\bdot;
	\draw (4,1)\bdot;
        \node at (-1,1.5) {$3$};
        \node at (4,1.5) {$8$};
	\node at (7.5,1) {$\cdots$};
	\node at (-5.5,1) {$\cdots$};
%Bottom weight diagram
	\node at (-6.5,-1) {$\lambda=$};
	\draw [thick ] (-5,-1) to  (7,-1);
	\draw (-4,-1)\bdot;
	\draw (-3,-1)\bdot;
	\draw (-2,-1)\tick;
	\draw (-1,-1)\bdot;
	\draw (0,-1)\tick;
	\draw (1,-1)\bdot;
	\draw (2,-1)\bdot;
	\draw (3,-1)\tick;
	\draw (4,-1)\tick;
        \node at (-4,-1.5) {$0$};
        \node at (2,-1.5) {$6$};
	\node at (7.5,-1) {$\cdots$};
	\node at (-5.5,-1) {$\cdots$};
%Draw arrows given by the function
      \draw [ thick, -> ]  (-1.2,0.85) to  (-3.8,-.85);
      \draw [ thick, -> ]  (-0.2,0.85) to (-2.8,-.85);
      \draw [ thick, -> ]  (1,.8) to (1,-.8);
      \draw [ thick, -> ]  (2.85,0.85) to  (-0.85,-.85); 
      \draw [ thick, -> ]  (3.9,0.8) to  (2.1,-.8);
\end{tikzpicture}
\end{equation*}
We have $\ell(\la,\mu)=12$. The first function has $\mathtt{L}(f)=2$ so $d=4$, whereas the second function has $\mathtt{L}(f)=1$ so $d=5$.

(2)  It is also possible to have some $P(\la)$ appear more than once as a summand in a given $P_{d}(\mu)$. For example, with $\mu=[0,1,2,3,8,9,10,11],\ \la=[-4,-3,0,1,4,5,8,9]$ we have two allowable functions $f:\mu\to\la$ pictured here: 
\begin{equation*}
\begin{tikzpicture}[scale=.75, color=\clr, baseline=0]
%Top weight diagram
	\node at (-6.5,1) {$\mu=$};
	\draw [thick ] (-5,1) to  (12,1);
	\draw (0,1)\bdot;
	\draw (1,1)\bdot;
	\draw (2,1)\bdot;
	\draw (3,1)\bdot;
	\draw (4,1)\tick;
	\draw (5,1)\tick;
	\draw (6,1)\tick;
	\draw (7,1)\tick;
	\draw (8,1)\bdot;
	\draw (9,1)\bdot;
	\draw (10,1)\bdot;
	\draw (11,1)\bdot;
        \node at (0,1.5) {$0$};
        \node at (8,1.5) {$8$};
	\node at (12.5,1) {$\cdots$};
	\node at (-5.5,1) {$\cdots$};
%Bottom weight diagram
	\node at (-6.5,-1) {$\lambda=$};
	\draw [thick ] (-5,-1) to  (12,-1);
	\draw (-4,-1)\bdot;
	\draw (-3,-1)\bdot;
	\draw (-2,-1)\tick;
	\draw (-1,-1)\tick;
	\draw (0,-1)\bdot;
	\draw (1,-1)\bdot;
	\draw (2,-1)\tick;
	\draw (3,-1)\tick;
	\draw (4,-1)\bdot;
	\draw (5,-1)\bdot;
	\draw (6,-1)\tick;
	\draw (7,-1)\tick;
	\draw (8,-1)\bdot;
	\draw (9,-1)\bdot;
        \node at (-4,-1.5) {$-4$};
        \node at (0,-1.5) {$0$};
        \node at (4,-1.5) {$4$};
        \node at (8,-1.5) {$8$};
	\node at (12.5,-1) {$\cdots$};
	\node at (-5.5,-1) {$\cdots$};
%Draw arrows given by the function
      \draw [ thick, -> ]  (0,0.8) to (0,-.8);
      \draw [ thick, -> ]  (1,0.8) to (1,-.8);
      \draw [ thick, -> ]  (1.8,.8) to (-3.8,-.8);
      \draw [ thick, -> ]  (2.8,0.8) to (-2.8,-.8);
      \draw [ thick, -> ]  (7.8,0.8) to (4.2,-.8);
      \draw [ thick, -> ]  (8.8,0.8) to (5.2,-.8);
      \draw [thick, -> ] (9.85,.85) to (8.15,-.85);
      \draw [thick, -> ] (10.85,.85) to (9.15,-.85);
\end{tikzpicture}
\end{equation*}
\begin{equation*}
\begin{tikzpicture}[scale=.75, color=\clr, baseline=0]
%Top weight diagram
	\node at (-6.5,1) {$\mu=$};
	\draw [thick ] (-5,1) to  (12,1);
	\draw (0,1)\bdot;
	\draw (1,1)\bdot;
	\draw (2,1)\bdot;
	\draw (3,1)\bdot;
	\draw (4,1)\tick;
	\draw (5,1)\tick;
	\draw (6,1)\tick;
	\draw (7,1)\tick;
	\draw (8,1)\bdot;
	\draw (9,1)\bdot;
	\draw (10,1)\bdot;
	\draw (11,1)\bdot;
        \node at (0,1.5) {$0$};
        \node at (8,1.5) {$8$};
	\node at (12.5,1) {$\cdots$};
	\node at (-5.5,1) {$\cdots$};
%Bottom weight diagram
	\node at (-6.5,-1) {$\lambda=$};
	\draw [thick ] (-5,-1) to  (12,-1);
	\draw (-4,-1)\bdot;
	\draw (-3,-1)\bdot;
	\draw (-2,-1)\tick;
	\draw (-1,-1)\tick;
	\draw (0,-1)\bdot;
	\draw (1,-1)\bdot;
	\draw (2,-1)\tick;
	\draw (3,-1)\tick;
	\draw (4,-1)\bdot;
	\draw (5,-1)\bdot;
	\draw (6,-1)\tick;
	\draw (7,-1)\tick;
	\draw (8,-1)\bdot;
	\draw (9,-1)\bdot;
        \node at (-4,-1.5) {$-4$};
        \node at (0,-1.5) {$0$};
        \node at (4,-1.5) {$4$};
        \node at (8,-1.5) {$8$};
	\node at (12.5,-1) {$\cdots$};
	\node at (-5.5,-1) {$\cdots$};
%Draw arrows given by the function
      \draw [ thick, -> ]  (-0.2,0.85) to  (-3.8,-.85);
      \draw [ thick, -> ]  (0.8,0.85) to (-2.8,-.85);
      \draw [ thick, -> ]  (1.85,.8) to (0.15,-.8);
      \draw [ thick, -> ]  (2.85,0.85) to  (1.15,-.85);
      \draw [ thick, -> ]  (8,0.8) to  (8,-.8);
      \draw [ thick, -> ] (9,0.8) to (9,-.8);
      \draw [thick, -> ] (9.8,.8) to (4.2,-.8);
      \draw [thick, -> ] (10.8,.8) to (5.2,-.8);
\end{tikzpicture}
\end{equation*}
We have $\ell(\la,\mu)=24$, and both functions have $\mathtt{L}(f)=4$. So these functions correspond to two occurrences of $P(\la)$ in degree $d=8$.
\end{example}

\section{Complexity}\label{S:growth}

\subsection{Rates of Growth}\label{SS:ratesofgrowth}

Given a sequence of nonnegative integers $(r_{d})_{d \geq 0}$, the \emph{rate of growth} of the sequence is the smallest nonnegative integer $c$ for which there exists a fixed real number $K>0$ such that  $r_{d} \leq Kd^{c-1}$ for all $d \geq 0$.  If no such $c$ exists, then we declare the rate of growth of the sequence to be infinite.

We define the \emph{$z$-complexity} of any sequence $\{U_{d}\}_{d\ge 0}$ of finite-dimensional $\fg$-modules, $c_{z}(U_{\bullet})$, as the rate of growth of the number  of indecomposable summands of $U_{d}$.  We define the \emph{$z$-complexity} of a $\fg$-module $M\in\F$, $c_{z}(M)$, to be the $z$-complexity of the minimal projective resolution of $M$ in $\FF$.    Clearly $c_{z}(\Delta(\mu))\leq c_{z}(P_{\bullet}(\mu))$.

Similarly, define the \emph{complexity} of any sequence $\{U_{d}\}_{d\ge 0}$ of modules, $c(U_{\bullet})$, as the rate of growth of the dimensions (as vector spaces) of the $U_{d}$.  For $\mu\in X^{+}(T)$, write $c(\mu)$ for the complexity of the projective resolution $P_{\bullet}(\mu)$ of $\Delta(\mu)$ constructed in \cref{S:TheAlgorithm}. We define the \emph{complexity} of a $\fg$-module $M\in\F$,  $c_{\F}(M)$, to be the complexity of the minimal projective resolution of $M$ in $\FF$.  Clearly $c_{\F}(\Delta(\mu)) \le c(\mu)$. 

\subsection{z-Complexity}

  For $\mu\in X^{+}(T)$, let $s_{d}(\mu)$ denote the number of indecomposable summands in $P_{d}(\mu)$, where 
$P_{\bullet}(\mu) \to \Delta (\mu)$ is the projective resolution constructed in \cref{T:BBProjResolution}.   In this section we compute the $z$-complexity of this projective resolution; that is, the rate of growth of  the sequence $\left(s_{d}(\mu) \right)_{d \geq 0}$.

For $\mu\in X^{+}(T)$, define a \emph{run} of $\mu$ to be a maximal sequence of (one or more) adjacent dots in the weight diagram $[\mu]$. The \emph{size} of a run is the number of dots it contains (a run of size one is the same as an isolated dot). Let $\pi=\pi(\mu)=(\pi_{1},\dots,\pi_{t})$ be the sequence of sizes of the runs of $\mu$, ordered from \emph{right to left} in the weight diagram. Then $\pi$ is a composition of $n$. Let $o$ be the number of odd parts in $\pi$.

\begin{lemma} \label{L:separationindependent}
Suppose $\mu$ and $\tilde\mu$ are two dominant weights which differ only in the number of integers separating their runs. Then 
\[
s_{d}(\mu)=s_{d}(\tilde\mu)
\]
for all $d\ge 0$.
\end{lemma}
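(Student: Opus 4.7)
The plan is to use \cref{T:degree,T:Projectivesandfunctions} to reinterpret
\[
s_d(\mu) = \#\{(\la, f) : f\colon \mu\to\la \text{ allowable}, \ \tfrac12\ell(\la,\mu)-\mathtt{L}(f) = d\},
\]
and then to construct a degree-preserving bijection between such pairs for $\mu$ and for $\tilde\mu$. By iteration it suffices to handle a single \emph{unit gap expansion}: fix an integer $p$ lying strictly in some gap of $\mu$ and let $\tilde\mu$ be the weight whose dot set is $\{a\in[\mu]\mid a<p\}\cup\{a+1\mid a\in[\mu],\ a>p\}$. This gives an evident run-preserving bijection $a\mapsto\tilde a$ from $[\mu]$ to $[\tilde\mu]$.

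Given an allowable $f\colon\mu\to\la$, realized by definition as a sequence of Moves 1--3 applied to the identity on some typical weight $\gamma$, I construct a parallel $\tilde f\colon\tilde\mu\to\tilde\la$ by induction on the length of the Move sequence. The base case applies the analogous tick insertion to $\gamma$, which stays typical since inserting a tick cannot create adjacent dots. For the inductive step, if $f$ arises from a previously built $g\colon\mu_0\to\la_0$ by applying a Move at a window $W$ of at most four consecutive integers, I apply the same type of Move to $\tilde g\colon\tilde\mu_0\to\tilde\la_0$ at the corresponding window $\tilde W$: the integers of $W$ strictly below $p$ are unchanged, the integers of $W$ strictly above $p$ are each shifted by $+1$, and when $W$ straddles $p$ the extra tick is absorbed, widening $\tilde W$ by one position. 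The parallel Move is available because Moves 1--3 each demand only a ``dot or tick'' pattern at specified positions, and gap expansion replaces no dots by ticks or vice versa: the extra tick at $p$ can only land on a position where a tick is required (or at least permitted) by the Move's local pattern. An inverse bijection is constructed symmetrically via tick removal.

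Degree preservation follows from the identity $\ell(\la,\mu) = \sum_j (a_j - b_j)$ with $[\mu]=[a_1,\dots,a_n]$ and $[\la]=[b_1,\dots,b_n]$, an immediate consequence of \cref{E:relativelengthdef} by double counting (using $b_j\le a_j$, which follows from $\mu\le\la$). Under the parallel Move construction the dots of $\mu$ shifted in passing to $\tilde\mu$ are precisely those matched by $f$ to the dots of $\la$ shifted in passing to $\tilde\la$; both families have cardinality equal to the number of dots of $\mu$ above $p$, so the $+1$'s cancel term by term and $\ell(\tilde\la,\tilde\mu) = \ell(\la,\mu)$. Similarly $\mathtt{L}(\tilde f) = \mathtt{L}(f)$, because the crossing count depends only on the relative order of arrow endpoints, which monotone shifts preserve. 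The main technical obstacle is the case analysis in the inductive step when a Move's window straddles $p$: one must verify, for each of Moves 1, 2, and 3 in turn, that the extra tick fits cleanly into the widened window without disrupting the required dot/tick pattern. A direct inspection of the three Move diagrams, in which the moving dots are always flanked by required ticks, confirms this.
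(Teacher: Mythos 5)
Your proposal takes a genuinely different route from the paper. The paper proves the lemma by induction on atypicality and degree, exploiting the recursive structure of the Algorithm directly: a preliminary observation handles the Step 2b-type dot slides, and for Step 2a the identity $s_{d}(\mu)=s_{d}(\nu)+s_{d-1}(\mu')$ reduces to weights of smaller atypicality or smaller degree. You instead reinterpret $s_{d}(\mu)$ via \cref{T:degree,T:Projectivesandfunctions} as a count of allowable functions and try to build a degree-preserving bijection with allowable functions for $\tilde\mu$ by ``gap-expanding'' the whole Move sequence. This is an attractive, more bijective strategy, and the degree-preservation bookkeeping via $\ell(\la,\mu)=\sum_i(a_i-b_i)$ is correct as far as it goes.

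However, there is a genuine gap in the parallel-Move construction. The insertion point $p$ is a tick of $\mu$ (the \emph{final} weight), but it need not be a tick of the intermediate weights $\mu_i$ in the Move sequence from $\operatorname{Id}_\gamma$ to $f$. Concretely, if the last Move applied was Move~1 at window $\{j-2,j-1,j\}$, then $j-1$ is a tick of $\mu$ but a \emph{dot} of $\tilde\mu_0$, so taking $p=j-1$ makes the gap-expansion formula $\{a<p\}\cup\{a+1:a>p\}$ ill-defined on $\tilde\mu_0$ (it simply deletes the dot at $p$). For the same reason the ``analogous tick insertion to $\gamma$'' in your base case is not actually defined: since $\mu\le\gamma$ means the dots of $\gamma$ sit weakly left of those of $\mu$, a gap of $\mu$ can land squarely on a dot of $\gamma$. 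A correct version of your argument would need to track the insertion point \emph{backward} through the Move sequence, decrementing it whenever a dot slides from $p$ to $p+1$; once this is done, the ``widening $\tilde W$ by one position'' becomes a genuine case analysis on each Move (and the position tracked for $\la$ may differ from that tracked for $\mu$, which affects the $\ell$ and $\mathtt{L}$ bookkeeping you wave through). Your appeal to ``a direct inspection of the three Move diagrams'' is precisely the place where this bookkeeping lives, and as written it does not survive the case $p=j-1$ for Move~1. The paper's recursive argument sidesteps all of this by never tracking an allowable function across the expansion; you would need to supply the backward-tracking and the resulting case analysis to make your approach sound.
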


\begin{proof} Self evidently, $\atyp (\mu) = \atyp (\tilde{\mu})$. 

If $\mu$ and $\tilde\mu$ are typical, then the result is clear. It is also clear for $d=0$ for any weights $\mu$, $ \tilde{\mu}$. We proceed inductively by assuming the result is true up to degree $d-1$ for weights of the same atypicality as $\mu$ and $\tilde{\mu}$, and up to degree $d$ for weights of smaller atypicality. In particular, we assume that $\atyp(\mu)=\atyp(\tilde\mu) \ge 1$ and $d>0$. Let us say that the $k$th black dots in the weight diagrams of $\mu$ and of $\mu'$, scanning from the left, \emph{correspond}, for each $1\le k \le n$.

As a preliminary observation, suppose a weight diagram $\nu$ is obtained from $\tilde\nu$ by moving a single isolated dot from $k-2$ to $k-1$ where it remains isolated (i.e., there is no dot at $k-3$ or $k$ in either $\nu$ or $\tilde\nu$). Then $\Theta_{k} \Delta(\tilde\nu) = \Delta(\nu)$, $\Theta_{k} P_{d}(\tilde\nu) = P_{d}(\nu)$, and, since (by \cref{L:IsolatedDots}) any $P(\tilde\lambda)$ which appears in $P_{d}(\tilde\nu)$ will have a left-isolated dot at $k-2$, by \cref{T:TranslationonProjectives} $\Theta_{k}P(\tilde\lambda)$ will be an indecomposable projective summand of $P_{d}(\nu)$. Thus $s_{d}(\nu)=s_{d}(\tilde\nu)$, for all $d\ge 0$.

Now suppose $P_{d}(\mu)$ is obtained via Step 2a of the Algorithm. Then there are dominant weights $\nu,\mu'$ with $\atyp(\nu)<\atyp(\mu)$ and $\atyp(\mu')\le\atyp(\mu)$ and an index $j$ such that
$$
P_{d}(\mu) = \Theta_{j+1}P_{d}(\nu) \oplus P_{d-1}(\mu').
$$
Moreover, for the same reasoning as in the previous paragraph, $\Theta_{j+1}$ sends each indecomposable projective summand of $P_{d}(\nu)$ to an indecomposable projective (i.e., not zero). Thus 
\begin{equation}\label{E:sformula}
s_{d}(\mu)=s_{d}(\nu) + s_{d-1}(\mu').
\end{equation}

With one exception, $P_{d}(\tilde\mu)$ is also obtained via Step 2a of the Algorithm, by moving the dot at $\tilde j$, corresponding to the dot at $j$ in $\mu$, one position left to obtain the weight $\tilde\nu$, and subsequently moving the dot at $\tilde j+1$ one position left to obtain $\tilde\mu'$. As above, $s_{d}(\tilde\mu)=s_{d}(\tilde\nu) + s_{d-1}(\tilde\mu')$. Moreover $\tilde\nu$ (resp.\ $\tilde\mu'$) differs from $\nu$ (resp.\ $\mu'$) only in the number of integers separating their runs. Thus by our induction hypothesis, $s_{d}(\tilde\nu)=s_{d}(\nu)$ and $s_{d-1}(\tilde\mu')=s_{d-1}(\mu')$. Combined with the previous equations, this gives $s_{d}(\tilde\mu)=s_{d}(\mu)$. 

The exception occurs when $\tilde\mu$ has a dot at $\tilde j -2$. In this case a sequence of isolated dots at and possibly left of position $\tilde j-2$ in $\tilde\mu$ must be moved one position left, while remaining isolated, before Step 2a can be applied. However, according to the preliminary observation above, each weight in this sequence will have all the same values of $s_{d}$ as does $\tilde\mu$. Thus when we are finally able to apply Step 2a, we get the same conclusion as in the previous paragraph.

Suppose $P_{d}(\mu)$ (and/or $P_{d}(\tilde\mu)$) is obtained via Step 2b of the Algorithm. This step only involves moving isolated dots so that they remain isolated. But by the preliminary observation, this does not change $s_{d}$. Thus we may without loss assume that all necessary applications of Step 2b have been performed already, thereby reverting to the situation of Step 2a, where the result has been proved.
\end{proof}

\begin{lemma}\label{C:IndependentofWeight}  If $\mu$ and $\tilde{\mu}$ are dominant weights with run sizes $\pi = (\pi_{1}, \dotsc , \pi_{t})$ and $\tilde{\pi}=(\tilde{\pi}_{1}, \dotsc , \tilde{\pi}_{t})$ where $\pi$ and $\tilde{\pi}$ are equal as unordered multisets, then 
\[
s_{d}(\mu) = s_{d}(\tilde{\mu})
\] for all $d\geq 0$. 

\end{lemma}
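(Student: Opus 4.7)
The plan is to prove this by double induction on $(\atyp(\mu), d)$ ordered lexicographically, parallel to the argument of \cref{L:separationindependent} but using the additional flexibility afforded by \cref{L:IndependenceOfOrder}. The base cases are immediate: if $\mu$ is typical then $s_{0}(\mu) = 1$ and $s_{d}(\mu) = 0$ for $d \geq 1$; and $s_{0}(\mu) = 1$ in general.

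For the inductive step, assume $\atyp(\mu) \geq 1$ and $d \geq 1$. First apply \cref{L:separationindependent} to replace $\mu$ and $\tilde{\mu}$ by weights whose runs are arbitrarily far apart; this does not change any $s_{d}$. Pick any run of size $k \geq 2$ in $\mu$; by hypothesis $\tilde{\mu}$ contains a corresponding run of size $k$. Invoking \cref{L:IndependenceOfOrder}, apply Step 2a of the Algorithm at the leftmost position of this run in each of $\mu$ and $\tilde{\mu}$, producing weights $\nu, \mu'$ and $\tilde{\nu}, \tilde{\mu}'$. The wide spacing guarantees the local hypotheses of Step 2a, and \cref{L:IsolatedDots} ensures that the translation functor sends each indecomposable summand of $P_{d}(\nu)$ to an indecomposable projective. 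Hence the same argument as in \cref{L:separationindependent} yields the recursions
\[
s_{d}(\mu) = s_{d}(\nu) + s_{d-1}(\mu'), \qquad s_{d}(\tilde{\mu}) = s_{d}(\tilde{\nu}) + s_{d-1}(\tilde{\mu}').
\]

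The key observation is that the effect of Step 2a on the multiset of run sizes depends only on $k$: passing from $\mu$ to $\nu$ replaces one part of size $k$ by two parts of sizes $k-1$ and $1$, while passing from $\mu$ to $\mu'$ replaces one part of size $k$ by parts of sizes $2$ and $k-2$ (omitting the $k-2$ when $k = 2$, which simply leaves the $k$ in place). The same replacements relate $\pi(\tilde{\mu})$ to $\pi(\tilde{\nu})$ and to $\pi(\tilde{\mu}')$, so $\pi(\nu) = \pi(\tilde{\nu})$ and $\pi(\mu') = \pi(\tilde{\mu}')$ as multisets. Moreover $\atyp(\nu) = \atyp(\mu) - 1$, while $\atyp(\mu') = \atyp(\mu)$ when $k = 2$ and $\atyp(\mu') = \atyp(\mu) - 1$ when $k \geq 3$. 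The inductive hypothesis therefore gives $s_{d}(\nu) = s_{d}(\tilde{\nu})$ (smaller atypicality) and $s_{d-1}(\mu') = s_{d-1}(\tilde{\mu}')$ (smaller atypicality if $k \geq 3$; same atypicality but smaller degree if $k = 2$). Summing the two recursions yields $s_{d}(\mu) = s_{d}(\tilde{\mu})$.

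The main subtlety I anticipate is the legitimacy of applying Step 2a at a run that is not the leftmost one, which is precisely why \cref{L:separationindependent} must be used first to force wide spacing and \cref{L:IndependenceOfOrder} is then needed to justify processing the analogous runs in $\mu$ and $\tilde{\mu}$. Once that is in place, the combinatorics of tracking run-size multisets is routine, and the induction closes cleanly by handling $k = 2$ (where one appeals to the degree drop) and $k \geq 3$ (where one appeals to the atypicality drop) separately.
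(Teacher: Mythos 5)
Your proof is correct and follows essentially the same approach as the paper's: reduce to widely-separated runs via \cref{L:separationindependent}, use \cref{L:IndependenceOfOrder} to apply Step~2a at the corresponding runs in $\mu$ and $\tilde\mu$, derive the recursion $s_d(\mu)=s_d(\nu)+s_{d-1}(\mu')$, and close by the inductive hypothesis. You are somewhat more explicit than the paper about the bookkeeping — spelling out the lexicographic order on $(\atyp,d)$, the effect of Step~2a on the multiset of run sizes (replacing $k$ by $\{k-1,1\}$ for $\nu$ and by $\{2,k-2\}$ for $\mu'$), and the $k=2$ versus $k\ge 3$ case split — but these are details the paper leaves implicit rather than a different argument.
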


\begin{proof}  Self evidently, $\atyp (\mu) = \atyp (\tilde{\mu})$.  We induct on atypicality  and $d$.  If $\mu$ and $\tilde{\mu}$ are typical, then the result is obvious for all $d \geq 0$; in general it is obvious for $d=0$.  Now consider the case when $\mu$ and $\tilde{\mu}$ are atypical and $d >0$. We assume as usual that the result holds in degree $d$ for all dominant weights which have strictly smaller atypicality, and in degree $d-1$ for all weights whose atypicality equals that of $\mu$ and $\tilde{\mu}$.  Since we are assuming $t < n$, it follows $\mu$ contains a run consisting of two or more dots, say the one indexed by $a$, of size $\pi_{a}$.  By assumption $\mu$ and $\tilde{\mu}$ have the same run sizes, so there is a $b$ such that $\tilde\pi_{b}=\pi_{a}$.  By \cref{L:separationindependent} we may also assume for both $\mu$ and $\tilde{\mu}$ that they have a large number of integer positions separating their runs.   By \cref{L:IndependenceOfOrder} we may also assume the construction of $P_{d}(\mu)$ (resp.\ $P_{d}(\tilde{\mu})$) was by applying Step 2a of the Algorithm to the leftmost dot of run $a$ (resp.\ $b$).   

As argued in the proof of \cref{L:separationindependent} to obtain \cref{E:sformula},  there are dominant weights $\nu$, $\mu'$ with $\atyp(\nu)<\atyp(\mu)$ and $\atyp(\mu')\le\atyp(\mu)$ such that
\[
s_{d}(\mu)=s_{d}(\nu) + s_{d-1}(\mu').
\] There are also dominant weights $\tilde{\nu}$, $\tilde{\mu}'$ with $\atyp(\tilde{\nu})<\atyp(\tilde{\mu})$ and $\atyp(\tilde{\mu}')\le\atyp(\tilde{\mu})$ such that
\[
s_{d}(\tilde{\mu})=s_{d}(\tilde{\nu}) + s_{d-1}(\tilde{\mu}').
\]  By the inductive assumption $s_{d}(\nu)=s_{d}(\tilde{\nu})$ and $s_{d-1}(\mu')= s_{d-1}(\tilde{\mu}')$, therefore $s_{d}(\mu)=s_{d}(\tilde{\mu})$ as desired.
\end{proof}

\subsection{Hilbert-Poincar\'e  Series for z-Complexity}\label{SS:Hilbertseries}
Given a dominant integral weight $\mu\in X^{+}(T)$ with run sizes $\pi = (\pi_{1}, \dotsc , \pi_{t})$, define a generating function 
\begin{equation}\label{E:generatingfunctiondef}
S_{\mu}(u) = S_{\pi}(u) = \sum_{d\geq 0} s_{d}(\mu)u^{d}.
\end{equation}
By the previous lemma this depends only on the composition $\pi$ and, indeed, only on the multiset of run sizes.  In what follows it will be convenient to write $S_{\pi}(u)$ for any tuple of positive integers $\pi =(\pi_{1}, \dotsc , \pi_{t})$, where the series is understood to be defined as in \cref{E:generatingfunctiondef} using a dominant weight $\mu$ with tuple of run sizes equal to $\pi$.  To avoid clutter we sometimes write $r$ for the composition $(r)$. 

For $r \geq 0$, let $f_{r}(u)$ be the polynomial in the variable $u$ determined by $f_{0}(u)=1$, $f_{1}(u)=1$, and the recursions:
\begin{align*}
f_{2k}(u)  &=(1-u)f_{2k-1}(u)+uf_{2k-2}(u), \\
f_{2k+1}(u) &= f_{2k}(u) + uf_{2k-1}(u),
\end{align*} for $k\ge 1$. In particular, $f_{2}(u)=1$. If $\pi = (\pi_{1}, \dotsc , \pi_{t})$ is a composition, then set 
\[
f_{\pi}(u) = f_{\pi_{1}}(u)\dotsb f_{\pi_{t}}(u).
\]  We write $o=o(\pi)$ for the number of odd runs (i.e.\  parts) in the composition $\pi$.

\begin{theorem}\label{T:zcomplexity}  The following statements hold true.
\begin{enumerate}
\item  If $\pi = (\pi_{1}, \dotsc , \pi_{t})$ is a composition of $n$, then
\[
S_{\pi}(u) = \prod_{j=1}^{t} S_{\pi_{j}}(u).
\]
\item For all $r \geq 0$,
\[
S_{r}(u) = \frac{f_{r}(u)}{\left(1-u \right)^{\lfloor r/2 \rfloor}}.
\]
\item  If $\pi = (\pi_{1}, \dotsc , \pi_{t})$ is a composition of $n$, then
\[
S_{\pi}(u) =  \frac{f_{\pi}(u)}{\left(1-u \right)^{\frac{n-o(\pi)}{2} }}.
\]
\item The $z$-complexity of the projective resolution $P_{\bullet}(\mu) \to \Delta (\mu)$ constructed in \cref{T:BBProjResolution} is given by 
\[
c_{z}\left(P_{\bullet}(\mu) \right)=\frac{n-o(\mu)}{2}.
\]

\end{enumerate}
\end{theorem}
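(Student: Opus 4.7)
The plan is to prove (1), (2), and (3) by a combined induction based on the single-run recursion obtained from Step 2a of the Algorithm, and then to read off (4) from the resulting Hilbert series. Throughout, write $\pi = (\pi_1, \ldots, \pi_t)$ for the (multiset of) run sizes of $\mu$.

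For (1), I induct on $\atyp(\mu)$ as primary parameter and on $d$ as secondary. The base case --- $\mu$ typical, so every $\pi_j = 1$ --- is immediate because then $S_\mu(u) = 1$. For the inductive step, \cref{L:separationindependent} lets me assume the runs of $\mu$ are arbitrarily far apart, and \cref{L:IndependenceOfOrder} lets me apply Step 2a to the leftmost pair of any chosen run of size $\pi_a \ge 2$. This yields
\begin{equation*}
s_d(\pi) = s_d(\ldots, 1, \pi_a - 1, \ldots) + s_{d-1}(\ldots, 2, \pi_a - 2, \ldots),
\end{equation*}
where a ``$0$'' part is omitted when $\pi_a = 2$. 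Both right-hand terms have strictly smaller atypicality than $\pi$, except that when $\pi_a = 2$ the second term has the same atypicality but degree $d-1$. In either case the inductive hypothesis expresses each term as a convolution of single-run counts. Grouping the combined sum by the total contribution $d_a$ at the $\pi_a$-slot, the resulting inner bracket is $s_{d_a}(\pi_a - 1) + \sum_{e_1 + e_2 = d_a - 1} s_{e_1}(2) s_{e_2}(\pi_a - 2)$, which is precisely $s_{d_a}(\pi_a)$ by the same recursion applied to a single run of size $\pi_a$ (together with a further appeal to the inductive hypothesis for the multi-run terms it produces). This gives the coefficient-wise form $s_d(\pi) = \sum_{d_1 + \cdots + d_t = d} \prod_{j} s_{d_j}(\pi_j)$ of (1).

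For (2), I specialize the same recursion to $t = 1$ and use (1) to simplify, obtaining $S_r(u) = S_{r-1}(u) + \frac{u}{1-u} S_{r-2}(u)$ for $r \ge 2$, with base cases $S_0 = S_1 = 1$. Substituting the ansatz $S_r = f_r(u)/(1-u)^{\lfloor r/2 \rfloor}$ and clearing denominators in the two parity cases reproduces exactly the defining recursions $f_{2k} = (1-u) f_{2k-1} + u f_{2k-2}$ and $f_{2k+1} = f_{2k} + u f_{2k-1}$, so induction on $r$ establishes (2). Part (3) is then immediate from (1) and (2) via $\sum_j \lfloor \pi_j/2 \rfloor = (n - o)/2$.

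For (4), the rate of growth of $s_d(\mu)$ equals the order of the pole of $S_\mu$ at $u = 1$. A short induction on $r$ gives $f_{2k}(1) = 1$ and $f_{2k+1}(1) = k + 1$, so $f_\mu(1) = \prod_j f_{\pi_j}(1) > 0$. Hence $S_\mu(u) = f_\mu(u)/(1-u)^{(n-o)/2}$ has a pole at $u = 1$ of exact order $(n-o)/2$, whence $s_d(\mu) \sim C d^{(n-o)/2 - 1}$ for a positive constant $C$ and $c_z(P_\bullet(\mu)) = (n - o)/2$. The real work lies in (1): although Step 2a is spatially local, it introduces new runs of sizes $1$ and $2$ at the boundary, and one must verify that under induction the internal sum telescopes exactly to $s_{d_a}(\pi_a)$, so that the Algorithm is ``local at the level of run multisets'' even though individual instances of Move 2 can couple distant regions of the weight diagram.
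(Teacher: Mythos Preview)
Your proof is correct and follows essentially the same approach as the paper: both derive the recursion $S_{\pi}(u)=S_{(\dotsc,1,\pi_a-1,\dotsc)}(u)+uS_{(\dotsc,2,\pi_a-2,\dotsc)}(u)$ from Step~2a (using \cref{L:separationindependent,L:IndependenceOfOrder}), compare it to the same recursion for a single run to extract the product formula, and then read off (2)--(4) in the same way. The only difference is organizational: the paper works at the generating-function level and disposes of the runs-of-size-$\le 2$ case first (solving $S_\pi=1+uS_\pi$ directly rather than via your secondary induction on $d$), whereas you work coefficientwise with a single combined double induction---this is a matter of presentation, not substance.
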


\begin{proof}  We first prove (1) under the assumption that all runs for $\pi$ have size one or two.  In this case we prove the statement by inducting on the number of runs of size two.  The base case is when all runs have size one, in which case the $\mu$ for $\pi$ is typical and, hence, $S_{\pi}(u)=1$, $S_{1}(u)=1$, and the result follows trivially.  Now assume $\mu$ has at least one run of size two.  By \cref{L:IndependenceOfOrder} we may assume without loss that the leftmost run has size two; that is, $\pi_{t}=2$ (recall that our convention is to list the run sizes from right to left). By applying the Algorithm to the leftmost dot on this run we may apply \cref{E:sformula} and \cref{L:separationindependent} to deduce 
\[
S_{\pi}(u) = S_{(\pi_{1}, \dotsc , \pi_{t-1}, 1,1)}(u) +uS_{\pi}(u).
\]  Thus by the inductive assumption and the base case it follows that
\[
S_{\pi}(u) = \frac{1}{1-u}\prod_{i=1}^{t-1} S_{\pi_{i}}(u).
\]  But the same argument applied in the special case when $\pi = 2$ shows $S_{2}(u)=\frac{1}{1-u}$ and, hence, 
\[
S_{\pi}(u) = \prod_{i=1}^{t} S_{\pi_{i}}(u).
\]

We now prove (1) in general by inducting on the atypicality of $\mu$.  If $\mu$ is typical or consists of only runs of size one and two, it is handled by the previous paragraph.  Therefore we may assume $\pi$ has a run of size strictly greater than $2$.  Again by \cref{L:IndependenceOfOrder} we may assume this run is $\pi_{t}$.  As above, we have  
\[
S_{\pi}(u) = S_{(\pi_{1}, \dotsc , \pi_{t-1}, \pi_{t}-1,1)}(u) +uS_{(\pi_{1}, \dotsc , \pi_{t-1}, \pi_{t}-2, 2)}(u).
\] By the inductive assumption we have
\begin{equation*}
S_{\pi}(u) = \prod_{i=1}^{t-1} S_{\pi_{i}}(u)\left[S_{(\pi_{t}-1,1)}(u)+uS_{(\pi_{t}-2, 2)}(u) \right].
\end{equation*}
However, applying \cref{E:sformula} to the leftmost dot in the special case of a single run of size greater than two proves
\[
S_{\pi_{t}}(u)=S_{(\pi_{t}-1,1)}(u)+uS_{(\pi_{t}-2, 2)}(u).
\]  Substituting this into the previous equation proves (1).

To prove (2) we induct on $r$ with $r=1,2$ already handled earlier in the proof.  Therefore we may assume $r \geq 3$.  Arguing as in the previous paragraph we have the first equality below and the subsequent equalities follow by the induction assumption and the definition of $f_{r}(u)$:
\begin{align*}
S_{r}(u) & =S_{(r-1,1)}(u)+uS_{(r-2, 2)}(u)\\
      & = \frac{f_{r-1}(u)}{(1-u)^{\lfloor (r-1)/2 \rfloor}} + u\frac{f_{r-2}(u)}{(1-u)^{\lfloor (r-2)/2 \rfloor + 1}} \\
      & = \begin{cases} \frac{f_{r-1}(u)+uf_{r-2}(u)}{(1-u)^{\lfloor r/2 \rfloor}}, &\text{$r$ odd}; \\
                        \frac{(1-u)f_{r-1}(u)+uf_{r-2}(u)}{(1-u)^{\lfloor r/2 \rfloor}}, &\text{$r$ even};
\end{cases}\\
       & = \frac{f_{r}(u)}{(1-u)^{\lfloor r/2 \rfloor}}.
\end{align*}

Now (3) follows immediately from (1) and (2) and the observation that
\[
\sum_{i=1}^{t} \lfloor \pi_{i}/2 \rfloor = \frac{n-o(\pi)}{2},
\]  for any composition $\pi = (\pi_{1}, \dotsc , \pi_{t})$ of $n$.

Finally, the recursion formulas easily imply $f_{r}(1) > 0$ for all $r \geq 0$ and this along with (3) immediately implies (4) (e.g.\ see \cite[Lemma 4.1.7]{BrunsHerzog}). 
\end{proof}

\begin{remark} Using standard techniques one can determine the polynomials $f_{r}(u)$ defined recursively above.   Set $p(u)=-3u^{2}+2u+1$.  Then:

\begin{align*}
2^{k}f_{2k}(u)&=   \sum_{\substack{0\le i\le k\\ i \text{ even}}} \binom{k}{i} (u+1)^{k-i}\, p(u)^{i/2} +  \sum_{\substack{0\le i\le k\\ i \text{ odd}}} \binom{k}{i} (u+1)^{k-i}(1-u)\, p(u)^{(i-1)/2}, \\
2^{k}f_{2k+1}(u)&=  \sum_{\substack{0\le i\le k\\ i \text{ even}}} \binom{k}{i} (u+1)^{k-i}\, p(u)^{i/2} +  \sum_{\substack{0\le i\le k\\ i \text{ odd}}} \binom{k}{i} (u+1)^{k-i+1}\, p(u)^{(i-1)/2}.
\end{align*}
Alternatively, they can be written as single sums after reindexing:
\begin{align*}
2^{k}f_{2k}(u) &= \sum_{\substack{0 \leq i \leq k \\ i \;  \text{even}}} \left[\binom{k}{i}(u+1)+\binom{k}{i+1}(1-u) \right]\left(u+1 \right)^{k-i-1}p(u)^{i/2},   \\
2^{k}f_{2k+1}(u) &=  \sum_{\substack{0 \leq i \leq k \\ i \; \text{even}}} \binom{k+1}{i+1}(u+1)^{k-i}p(u)^{i/2}.
\end{align*}
As we do not need the precise form of the polynomials for the paper, we omit the derivation.
\end{remark}

\subsection{Complexity}\label{SS:complexity}  We next consider the complexity $c(\mu)$ of the resolution
$P_{\bullet}(\mu) \to \Delta (\mu)$  constructed in \cref{T:BBProjResolution} for any $\mu \in X^{+}(T)$.  

Using the matrix realization given in \cref{E:matrixform} we identify $\fg_{-1}$ as the space of skew-symmetric $n\times n$ matrices.   Let $(\fg_{-1})_{k}$ be the subset of $\fg_{-1}$ consisting all matrices of rank $k$.  Its Zariski closure $\overline{(\fg_{-1})_{k}}$ then consists of the elements of $\fg_{-1}$ of rank at most $k$. It is an easy exercise to verify when $k=2\ell$ is even, $\dim \overline{(\fg_{-1})_{2\ell}} = \ell(2n-2\ell-1)$. 

\begin{theorem}\label{T:complexityupperbound}
For $\mu\in X^{+}(T)$ let $o$ denote the number of odd runs of $\mu$.   We then have
$$
c(\mu) \le  \binom{n}{2} - \binom{o}{2} = \dim \overline{(\fg_{-1})_{n-o}}.
$$
\end{theorem}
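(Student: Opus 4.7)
The plan is to bound $\dim P_d(\mu)$ using the combinatorial description of $P_\bullet(\mu)$ given by \cref{T:degree}, together with the Weyl dimension formula and \cref{L:IsolatedDots}. By \cref{T:degree}, $P_d(\mu)$ is a direct sum of indecomposable projectives $P(\lambda)$ indexed by allowable functions $f:\mu\to\lambda$ of degree $d=\tfrac{1}{2}\ell(\lambda,\mu)-\mathtt{L}(f)$. Since $\mathtt{L}(f)\le\binom{n}{2}$, each $\lambda$ appearing in $P_d(\mu)$ satisfies $\ell(\lambda,\mu)\le 2d+n(n-1)$, so the entries of $\lambda$ differ from those of $\mu$ by at most $O(d)$.

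Next, I would bound $\dim P(\lambda)$ for each such $\lambda$. Since $P(\lambda)$ admits a Kac filtration whose number of factors is bounded uniformly in $d$, and $\dim\Delta(\sigma)=2^{n(n+1)/2}\dim L_0(\sigma)$, it suffices to bound $\dim L_0(\sigma)$ via Weyl's dimension formula for $\mathfrak{gl}(n)$. The Weyl product has $\binom{n}{2}$ factors of the form $\bar\sigma_i-\bar\sigma_j$. By \cref{L:IsolatedDots}, the $o$ isolated dots of $\mu$ persist at the same positions in $\lambda$, and tracing through the algorithm one checks the same holds for every $\sigma$ arising in the Kac filtration of $P(\lambda)$. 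Consequently the $\binom{o}{2}$ Weyl factors indexed by pairs of such persistent isolated-dot positions are bounded by constants depending only on $\mu$, while the remaining $\binom{n}{2}-\binom{o}{2}$ factors grow at most linearly in $d$, yielding $\dim P(\lambda)=O(d^{\binom{n}{2}-\binom{o}{2}})$.

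The final step, and the main obstacle, is to sum these contributions sharply. A crude multiplication by the count $s_d(\mu)=O(d^{(n-o)/2-1})$ from \cref{T:zcomplexity} overshoots the target by a factor of $d^{(n-o)/2}$, because most $\lambda$ in $P_d(\mu)$ have dimension much smaller than the maximum. The cleanest route is to introduce the dimension generating function
\[
T_\mu(u)\;:=\;\sum_{d\ge 0}\dim P_d(\mu)\,u^d
\]
and to derive a recursion for $T_\mu(u)$ from Step 2a of the Algorithm together with the behavior of $\Theta_{j+1}$ on indecomposable projectives given by \cref{T:TranslationonProjectives}. Inducting on $\atyp(\mu)$ (and $d$, as in \cref{L:separationindependent}), one then computes the pole order of $T_\mu(u)$ at $u=1$ and verifies it is at most $\binom{n}{2}-\binom{o}{2}$, paralleling the generating function computation in \cref{T:zcomplexity}. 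The key difficulty is that, unlike $s_d(\mu)$, the individual dimensions $\dim P(\lambda)$ genuinely depend on the positions of $\lambda$'s dots rather than only on its run structure, so the elegant shift-invariance of \cref{L:separationindependent,C:IndependentofWeight} must be replaced by careful tracking of how dimensions evolve under translation. Finally, the equality $\binom{n}{2}-\binom{o}{2}=\dim\overline{(\fg_{-1})_{n-o}}$ is immediate from the formula $\dim\overline{(\fg_{-1})_{2\ell}}=\ell(2n-2\ell-1)$ stated just before the theorem, applied with $2\ell=n-o$.
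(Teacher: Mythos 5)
You have correctly set up the problem and identified the crucial obstruction: with only the $\binom{o}{2}$ bounded Weyl factors coming from \cref{L:IsolatedDots}, the per-summand bound $\dim P(\la) = O(d^{\binom{n}{2}-\binom{o}{2}})$ multiplied by $s_d(\mu)=O(d^{(n-o)/2-1})$ overshoots the target by $d^{(n-o)/2}$. However, your proposed remedy --- a dimension generating function $T_\mu(u)$ with a recursion inducted on atypicality --- is not what the paper does, and you yourself flag the reason it would be painful: $\dim P(\la)$ depends on the actual dot positions of $\la$, not just its run structure, so the shift-invariance underlying \cref{L:separationindependent,C:IndependentofWeight} does not transport.

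The paper instead closes the gap by \emph{sharpening the per-summand bound itself}. The key observation, which your argument misses, is that an allowable function $f:\mu\to\la$ exhibits not only $o$ ``persistent'' fixed points but in fact structures all $n$ dots into fixed points and linked ``adjacent pairs'' (the pairs created by Move 3, which remain adjacent in $\mu$ throughout the algorithm). Taking the rightmost fixed point of each odd run gives $o$ indices; the remaining $n-o$ indices partition into $(n-o)/2$ pairs, each either an adjacent pair or a pair of leftover fixed points matched up within a run. Each such pair $(r,s)$ contributes a single positive root $\ve_r-\ve_s$ whose Weyl factor $\bar\la_{\varphi(r)}-\bar\la_{\varphi(s)}$ is bounded independently of $d$: for fixed-point pairs because $\bar\la_{\varphi(i)}=\bar\mu_i$, and for adjacent pairs because the two image dots can be separated by at most $2(n-2)$ via applications of Move 2. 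This produces $\binom{o}{2} + (n-o)/2$ bounded Weyl factors rather than just $\binom{o}{2}$, yielding $\dim P(\la)\le C\,d^{\binom{n}{2}-\binom{o}{2}-(n-o)/2}$, which multiplied by $s_d(\mu)=O(d^{(n-o)/2-1})$ gives exactly $O(d^{\binom{n}{2}-\binom{o}{2}-1})$. So the missing idea is a finer partition of $\Phi_0^+$ into three sets (in the paper's notation $A(\mu)$, $B(\mu)$, $C(\mu)$) rather than two; your current argument has only $A(\mu)$ versus everything else. One further small point: the paper bounds $\dim P(\la)$ directly via $\dim L_0(\la)\le\dim P(\la)\le 2^{\dim\fg_{\1}}\dim L_0(\la)$, so there is no need to go through the Kac filtration factors $\sigma$ of $P(\la)$; if you did, you would need to argue separately that the isolated-dot positions and the adjacency structure persist for every such $\sigma$, which \cref{L:IsolatedDots} does not give you directly.
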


\begin{proof}
The equality is a simple consequence of the dimension formula immediately preceding the theorem, so we focus on the inequality. We need an upper bound on the dimensions of the possible indecomposable projective summands $P(\la)$ of $P_{d}=P_{d}(\mu)$. The argument in \cite[Section 5.1]{BKN4} carries over {\it mutatis mutandis} to show that
$$
\dim L_{0}(\la) \le \dim P(\la) \le 2^{\dim \fg_{\1}} \dim L_{0}(\la).
$$
As in \cite[Section 5.2]{BKN4}, it suffices to obtain an upper bound (as a monomial in $d$) for $\dim L_{0}(\la)$ for the possible direct summands $P(\la)$ of $P_{d}$. Fix such a summand and let $f:\mu\to\la$ be the associated allowable function as in \cref{T:degree}. Since $f$ is a bijection, we have a permutation $\varphi\in S_{n}$ such that $f(\bar\mu_{i}) = \bar\la_{\varphi(i)}$ for all $i$; recall that $\bar{\mu}=\mu+\rho$. 

First, by tensoring by sufficiently many copies of the supertrace representation we may assume that $\bar{\mu}_{i}\le 0$ for $1\le i\le n$. Also $\la\ge\mu$, whence $\bar\la_{i}\le\bar\mu_{i}\le 0$ for all $i$.

Next, it is an immediate consequence of the Moves in \cref{SS:KeyFunctions} that each dot of $\mu$ is either fixed by $f$, or is part of an ``adjacent pair'' of dots to which Move 3 was applied at some step of the algorithm. In fact, it is not hard to see that the first time Move 3 is applied to a particular pair of adjacent dots, they must both have been fixed, and after the move, they each map one  position left via the allowable function. All subsequent moves keep this pair of dots adjacent (although their images under the function may become separated by applications of Move 2). We view such a pair as permanently ``linked'' through the remainder of the algorithm, and call them an ``adjacent pair.''

In particular, each of the $o$ odd runs of $\mu$ has at least one fixed point $\bar\mu_{i}=f(\bar\mu_{i})=\bar\la_{\varphi(i)}$. Set
\begin{gather*}
F_{0}(\mu,f) = \{\,1\le i\le n  \mid  \bar\mu_{i} \text{ is the rightmost fixed point of an odd run of } \mu\,\},\\
F'_{0}(\la,f) = \{\,\varphi(i)  \mid  i\in F_{0}(\mu,f)\,\},
\end{gather*}
Note that these are each sets of size $o$.
Evidently the remaining dots of each (even or odd) run of $\mu$ (other than those indexed by $F_{0}(\mu,f)$) consist of adjacent pairs and (an even number of) fixed points. Let's pair these (remaining) fixed points beginning at the left of each run. Set
\begin{gather*}
P(\mu,f)=\{\,(i,i+1)  \mid  \bar\mu_{i} \text{ and } \bar\mu_{i+1} \text{ are an adjacent pair in some run of } \mu\,\},\\
F(\mu,f)=\{\,(i,j)  \mid  i<j,\ \bar\mu_{i} \text{ and } \bar\mu_{j} \text{ are paired fixed points in some run of } \mu\,\}.
\end{gather*}
Define $P'(\la,f)$ (resp.\ $F'(\la,f)$) by applying $\varphi$ to each pair in $P(\mu,f)$ (resp.\ $F(\mu,f)$). Then each index $1\le i\le n$ appears exactly once in one of $F_{0}(\mu,f),\ P(\mu,f)$, or $F(\mu,f)$ (resp.\ $F'_{0}(\la,f), P'(\la,f)$, or $F'(\la,f)$).

Partition the positive even roots $\Phi^{+}_{0} = \{\,\ve_{r}-\ve_{s}  \mid  r < s \,\}$ into three subsets:
\begin{gather*}
A(\mu) = \{\,\ve_{r}-\ve_{s}  \mid  r, s \in F_{0}(\mu,f) \,\},\\
B(\mu) = \{\,\ve_{r}-\ve_{s}  \mid  (r,s)\in P(\mu,f) \cup F(\mu,f) \,\}, \\
C(\mu) = \Phi^{+}_{0} \smallsetminus (A(\mu) \cup B(\mu)).
\end{gather*}
Define $A'(\la), B'(\la), C'(\la)$ similarly using $F'_{0}(\la,f), P'(\la,f), F'(\la,f)$. Notice that $\# A(\mu)=\# A'(\la)\linebreak[0]=\binom{o}{2}$ and $\# B(\mu)=\# B'(\la)= (n-o)/2$.

To bound the dimension of $L_{0}(\lambda)$ we use the Weyl dimension formula for $\gln$:
\begin{equation}\label{E:WCF}
\dim L_{0}(\la) = \prod_{\a\in\Phi^{+}_{0}}\frac{(\la+\rho,\a)}{(\rho,\a)} = \prod_{\a\in A'(\la)}\frac{(\la+\rho,\a)}{(\rho,\a)} \prod_{\a\in B'(\la)}\frac{(\la+\rho,\a)}{(\rho,\a)} \prod_{\a\in C'(\la)}\frac{(\la+\rho,\a)}{(\rho,\a)}.
\end{equation}
Since the denominators in \cref{E:WCF} are at least one, we can and will henceforth ignore them in determining an upper bound.

\iffalse
Introduce positive constants $z=\min\{(\rho,\a)  \mid  \a\in \Phi^{+}_{0}\}$ and $Z=\max\{(\rho,\a)  \mid  \a\in \Phi^{+}_{0}\}$ depending only on $n$; indeed, for $\gln$, $z=1$ and $Z=n-1$. Since we only need an upper bound for $\dim L_{0}(\la)$ of the form $C d^{c}$ where $C$ is a positive constant depending perhaps on $n$ and $\mu$, we may ignore the denominators in \cref{E:WCF}. \todo[inline]{This paragraph could be simplified. We don't really need $z$ and $Z$. We probably just need to say that the denominators are $\ge 1$ so can be replaced by $1$ (i.e., ignored) for the purposes of obtaining an upper bound.}
\fi

We now analyze each of the three factors on the right hand side of \cref{E:WCF}. The numerator in the first factor is a product over pairs $r'<s'$ in 
$F'_{0}(\la,f)$ of
$$
(\bar\la,\ve_{r'}-\ve_{s'})=\bar\la_{r'}-\bar\la_{s'}=\bar\mu_{r} - \bar\mu_{s},
$$
where for simplicity of notation we are denoting $\varphi^{-1}(r')$ by $r$, and similarly for $s$.
Thus the first factor in \cref{E:WCF} is bounded above by a positive constant $C_{1}$ depending only on $n$ and $\mu$.

The numerator of the second factor is a product of two types of factors $\bar\la_{r'}-\bar\la_{s'}$. The first type, coming from pairs of fixed points $(r,s)\in F(\mu,f)$, can be handled exactly as in the previous paragraph. The second type comes from pairs $(r,s)\in P(\mu,f)$. In this case $r$ and $s=r+1$ index an adjacent pair in $\mu$. Recall that these are created via an application of Move 3 in the algorithm, with the image dots also initially adjacent. Subsequently the image dots under the function can be moved apart by 2 for each instance of Move 2: an isolated dot moving from left to right past the leftmost of the two image dots. Since there are at most $n-2$ dots to the left which can effect such a move, it follows that 
$$
\bar\la_{r'}-\bar\la_{s'} \le 2(n-2)+(\bar\mu_{r}-\bar\mu_{s}).
$$
In particular, the second factor in \cref{E:WCF} is bounded above by a positive constant $C_{2}$ depending only on $n$ and $\mu$.

Next, we consider the third factor. Recall the identity 
$$
d= \tfrac12 \ell(\lambda, \mu) - \mathtt{L}(f).
$$
from \cref{T:degree}. Here $\mathtt{L}(f) = \#\{\, i<j  \mid  f(\bar\mu_{i}) > f(\bar\mu_{j}) \,\} \le \binom{n}{2}$. Thus
$$
\sum_{i}(\bar\mu_{i}-\bar\la_{i}) =  \ell(\la,\mu) \le 2d + 2\binom{n}{2}.
$$
Since we normalized so that, for each $1\le i\le n$, $\bar\la_{i}\le\bar\mu_{i}\le 0$, we deduce
$$
|\bar\la_{i}| = -\bar\la_{i} \le -\bar\mu_{i} + 2d + n(n-1) = 2d + |\bar\mu_{i}| + n(n-1).
$$
Hence a typical factor in the numerator of the third factor of \cref{E:WCF} is
$$
\bar\la_{r'}-\bar\la_{s'}\le |\bar\la_{r'}| + |\bar\la_{s'}| \le 4d + D
$$
where $D$ is a positive constant depending only on $n$ and $\mu$. Since $\# C'(\la)=\binom{n}{2} - \binom{o}{2} - \frac{n-o}{2}$, we deduce that the third factor of \cref{E:WCF} is bounded above by 
$$
C_{3} d^{\binom{n}{2} - \binom{o}{2} - \frac{n-o}{2}},
$$
where $C_{3}$ is a positive constant depending only on $n$ and $\mu$.

Putting this all together, we have that $\dim P(\la)\le C_{4} d^{\binom{n}{2} - \binom{o}{2} - \frac{n-o}{2}}$ where $C_{4}=C_{1}C_{2}C_{3}\cdot 2^{\dim \fg_{\1}} $ is a positive constant depending only on $n$ and $\mu$. Combining this with the computation of the $z$-complexity of this resolution in \cref{T:zcomplexity} we have
$$
\dim P_{d} = \sum_{P(\la) \mid P_{d}} \dim P(\la) \le C d^{\binom{n}{2} - \binom{o}{2} - \frac{n-o}{2}} d^{\frac{n-o}{2} - 1} = C d^{\binom{n}{2} - \binom{o}{2} - 1},
$$
where $C$ is a positive constant depending only on $n$ and $\mu$. This gives the inequality in the statement of the theorem, and completes the proof.
\end{proof}

\section{Support and Associated Varieties} \label{S:supportvarieties}

\subsection{Support Varieties}\label{SS:introtosupports}

Let $\mathfrak{g}$ be a classical Lie superalgebra and let $M$
be in ${\mathcal F}:={\mathcal F}(\g,\g_{\0})$. According to \cite{BKN1}, 
$R:=\operatorname{H}^{\bullet}(\mathfrak{g}, \mathfrak{g}_{\0};{\mathbb C})= \oplus_{d \geq 0} \Ext_{\FF}^{d}(\C , \C)$ is a finitely generated commutative ring and 
$\Ext_{\mathcal{F}}^{\bullet}(M,M) = \oplus_{d \geq 0} \Ext_{\FF}^{d}(M , M)$ is a finitely generated $R$-module.
Set $J_{({\mathfrak g},{\mathfrak g}_{\0})}(M) :=
\operatorname{Ann}_{R}(\Ext_{\mathcal{F}}^{\bullet}(M,M))$
(i.e., the annihilator ideal of this module).  The \emph{support variety of $M$} is
\begin{equation}
\mathcal{V}_{(\mathfrak{g},\mathfrak{g}_{\0})}(M) :=
\operatorname{MaxSpec}(R/J_{({\mathfrak g},{\mathfrak g}_{\0})}(M)).
\end{equation}

Since $\fg = \fp (n)$ is $\Z$-graded and concentrated in degrees $-1$, $0$, and $1$, both $\fg_{1}$ and $\fg_{-1}$ are abelian Lie superalgebras.  Consequently, 
\[
R_{\pm} :=\HH^{\bullet} (\fg_{\pm 1}, \C ) = \HH^{\bullet}(\fg_{\pm 1}, \{0 \};\C) \cong S(\fg_{\pm 1}^{*})
\] as graded algebras.  Let $\mathcal{F}(\fg_{\pm 1})$ be the category of finite-dimensional $\fg_{\pm 1}$-modules.  If $M$ is an object in $\mathcal{F}(\fg_{\pm 1})$, then one can define the $\fg_{\pm 1}$ \emph{support variety} of $M$, 
\[
\V_{\fg_{\pm 1}}(M) := \V_{(\fg_{\pm 1},0)}(M),
\] as above.  Since $\fg_{\pm 1}$ is abelian the arguments given in \cite[Section 5]{BKN1} for detecting subalgebras apply here as well and one has that $\V_{\fg_{\pm 1}}(M)$ is canonically isomorphic to the following rank variety: 
\[
\V_{\fg_{\pm 1}}^{\text{rank}}(M) :=\left\{ x \in \fg_{\pm 1}  \mid  M \text{ is not projective as a $U(\langle x \rangle)$-module} \right\} \cup \{ 0 \},
\] where $U(\langle x \rangle)$ denotes the enveloping algebra of the Lie subsuperalgebra generated by $x \in \fg_{\pm 1}$.  We will identify $\V_{\fg_{\pm 1}}(M)$ and $\V^{\text{rank}}_{\fg_{\pm 1}}(M)$ via this canonical isomorphism.

Let $\fp^{\pm} = \fg_{0} \oplus \fg_{\pm 1}$ and let $M$ be a $\fp^{\pm}$-module in $\F_{(\fp^{\pm},\fg_{0})}$.  By \cite[Theorem 3.3.1]{BKN4}, we have

\begin{equation}\label{E:complexityequalssupportsforp}
c_{\F(\fp^{\pm},\fg_{0})}(M)=\dim {\mathcal V}_{{\mathfrak g}_{\pm 1}}(M)=\dim \V_{\fg_{\pm 1}}^{\rm{rank}}(M).
\end{equation}

\subsection{A Rank Variety Calculation}\label{SS:rankvarietycalc} Before continuing we establish the existence of certain explicit elements in $\VV _{\fg_{-1}}(\Delta(\mu))$ for $\mu\in X^{+}(T)$.  Fix $k$ with $1 \leq k \leq  n$.  Let $\fg'=\fg'(k)$ be the subspace of $\fg = \fp (n)$ spanned by the following sets:
\begin{gather*}
\left\{h_{i}  \mid 1 \leq i \leq  n, i \neq k \right\}, \\
\left\{a_{i,j}  \mid  1 \leq i \neq j \leq  n, i \neq k, j \neq k \right\}, \\
\left\{b_{i,j}   \mid  1 \leq i \leq j \leq  n, i \neq k, j \neq k \right\}, \\
\left\{c_{i,j}   \mid 1 \leq i < j \leq  n, i \neq k, j \neq k \right\}.
\end{gather*}
That is, $\fg'$ is the subspace of $\fg$ consisting of matrices which have zeros in the $k$th and $(n+k)$th rows and columns.  Using the commutator formulas given in \cref{SS:conventions} we see $\fg'$ is a Lie subsuperalgebra of $\fg$ and, moreover, $\fg' \cong \fp (n-1)$.   

For this section, we write $X_{n}^{+}(T)$ for the dominant integral weights for $\fp (n)$. Similarly, for $\mu \in X_{n}^{+}(T)$ we write $L_{0,n}(\mu)$ for the simple $\fp (n)_{0} \cong \gl (n)$ module of highest weight $\mu$ and 
\begin{equation*}
\Delta_{n}(\mu) = U(\fp (n)) \otimes_{U(\fp(n)_{0}\oplus \fp(n)_{-1})} L_{0, n}(\mu )
\end{equation*} for the Kac module. 
Given $\mu = \sum_{i=1}^{n} \mu_{i}\varepsilon_{i} \in X_{n}^{+}(T)$, let $\mu ' = \sum_{i \neq k} \mu_{i}\varepsilon_{i}$ be identified in the obvious way with an element of $X_{n-1}^{+}(T)$.

\begin{lemma}\label{L:RestrictingKacModules} Fix $k$ with $1 \leq k \leq n$ and let $\fg = \fp (n)$ and $\fg' =\fg'(k) \subseteq \fg$.  For any $\mu \in X_{n}^{+}(T)$ let $\mu ' \in X_{n-1}^{+}(T)$ be as above. Then there is a direct sum decomposition of $\fg '$-modules,
\[
\Delta_{n}(\mu) \cong \Delta_{n-1}(\mu') \oplus U,
\] for some $\fg'$-module $U$.
\end{lemma}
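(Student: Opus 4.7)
The plan is to realize $\Delta_{n-1}(\mu')$ as a $\fg'$-submodule of $\Delta_n(\mu)$ via a highest-weight argument, and then construct a $\fg'$-stable complement using the natural element $h_k$ together with a degree filtration.

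First, I would observe that the $\gl(n)$-highest weight vector $v_\mu \in L_{0,n}(\mu) \subset \Delta_n(\mu)$ is annihilated by all positive root vectors of $\fg'_0 \cong \gl(n-1)$ and has weight $\mu'$ under the Cartan of $\fg'_0$; hence $v_\mu$ generates a copy of $L_{0,n-1}(\mu')$ inside $L_{0,n}(\mu)$. Because $\fg_{-1}$ annihilates the generating subspace $1 \otimes L_{0,n}(\mu) \subseteq \Delta_n(\mu)$, so does $\fg'_{-1}$, making this copy of $L_{0,n-1}(\mu')$ into a $\fp'_- := \fg'_0 \oplus \fg'_{-1}$-submodule with trivial $\fg'_{-1}$-action. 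The universal property of induction then yields a $\fg'$-homomorphism $\iota \colon \Delta_{n-1}(\mu') \to \Delta_n(\mu)$, which is injective by PBW applied to the $\fg'_0$-invariant decomposition $\fg_1 = \fg'_1 \oplus W$, where $W$ is spanned by the root vectors $b_{p,q}$ with $p=k$ or $q=k$.

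Next, I would use the element $h_k \in \fh$ to begin the complement. The commutator relations \cref{E:commutator1,E:commutator2,E:commutator3} show that every basis vector of $\fg'$ has $h_k$-weight zero, so $h_k$ centralizes $\fg'$ in $\fg$. Consequently the $h_k$-eigenspace decomposition $\Delta_n(\mu) = \bigoplus_\alpha N_\alpha$ is a decomposition of $\fg'$-modules, and $\iota(\Delta_{n-1}(\mu'))$ lies entirely inside the single eigenspace $N_{\mu_k}$ (since $v_\mu$ has $h_k$-eigenvalue $\mu_k$ and $\fg'$ has $h_k$-weight zero).

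The main obstacle is then to split $\iota(\Delta_{n-1}(\mu'))$ off within $N_{\mu_k}$, which small examples already show can contain extra summands. For this I would introduce the ascending $W$-degree filtration $F_0 \subseteq F_1 \subseteq \cdots$ on $\Delta_n(\mu)$, with $F_p$ spanned by PBW monomials using at most $p$ factors from $W$. A direct computation shows each $F_p$ is $\fg'$-stable; the delicate case is $\fg'_{-1}$, which acts through commutators $[c_{u,v}, b_{k,\ast}] \in \fg_0$, and PBW normalization introduces corrections whose $W$-degree is at most $p$. The bottom piece $F_0$ is canonically identified with $\operatorname{Ind}_{\fp'_-}^{\fg'} L_{0,n}(\mu)$; by reductivity of $\fg'_0$ applied to the splitting $L_{0,n}(\mu) = L_{0,n-1}(\mu') \oplus K$ as $\fp'_-$-modules, induction gives $F_0 \cong \Delta_{n-1}(\mu') \oplus \operatorname{Ind}_{\fp'_-}^{\fg'} K$ as $\fg'$-modules. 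To finish, I would show $F_0$ is a $\fg'$-direct summand of $\Delta_n(\mu)$ by inductively lifting each $F_p/F_{p-1}$ to a $\fg'$-stable complement inside $F_p$, carefully bookkeeping the cross terms arising from components of $[\fg'_{-1},W]$ lying outside $\fg'_0$ in the PBW normalization; combined with the $h_k$-eigenspace argument above, this yields the desired decomposition $\Delta_n(\mu)|_{\fg'} = \Delta_{n-1}(\mu') \oplus U$.
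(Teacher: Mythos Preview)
Your steps 1--4 are sound: the embedding $\iota$ exists by the universal property, the element $h_k$ does centralize $\fg'$ and so gives a $\fg'$-module eigenspace decomposition, the $W$-degree filtration $F_\bullet$ is $\fg'$-stable, and $F_0$ is indeed isomorphic to $\operatorname{Ind}_{\fp'_-}^{\fg'} L_{0,n}(\mu)$, which splits as $\Delta_{n-1}(\mu')\oplus \operatorname{Ind}_{\fp'_-}^{\fg'} K$ by exactness of induction and semisimplicity of $\fg'_0$.

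The genuine gap is step 5. You assert that each $F_p/F_{p-1}$ can be ``inductively lifted to a $\fg'$-stable complement inside $F_p$,'' but you give no mechanism for this. The category of finite-dimensional $\fg'$-modules is \emph{not} semisimple, so short exact sequences $0\to F_{p-1}\to F_p\to F_p/F_{p-1}\to 0$ need not split. The ``cross terms'' you mention are real: for example with $n=3$, $k=2$, one computes $c_{1,3}\cdot(b_{1,2}\otimes m)=1\otimes a_{2,3}m$, which drops from $W$-degree $1$ to $W$-degree $0$, and $a_{2,3}m$ can land on the highest weight vector $v_\mu$ itself. Saying you will ``carefully bookkeep'' these terms is not a proof that they assemble into a $\fg'$-equivariant splitting; you would need to exhibit the section explicitly or show the relevant $\operatorname{Ext}^1$ vanishes, and you do neither. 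The $h_k$-eigenspace reduction does not rescue this, since (as you yourself note) $N_{\mu_k}$ already contains more than $\iota(\Delta_{n-1}(\mu'))$ and the filtration problem persists inside it.

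By contrast, the paper does not use a filtration at all. It writes down the vector-space decomposition
\[
\Delta_n(\mu)=\bigl(\Lambda'\otimes L_{0,n-1}(\mu')\bigr)\oplus\bigl(\Lambda'\otimes U'\oplus \Lambda''\otimes L_{0,n}(\mu)\bigr)
\]
directly from the PBW isomorphism $\Delta_n(\mu)\cong\Lambda^\bullet(\fg_1)\otimes L_{0,n}(\mu)$ together with $\Lambda^\bullet(\fg_1)=\Lambda'\oplus\Lambda''$ and $L_{0,n}(\mu)=L_{0,n-1}(\mu')\oplus U'$, and then asserts that the commutator formulas show both pieces are $\fg'$-stable. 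This is a one-step verification rather than an inductive splitting argument, and it identifies the complement $U$ explicitly. If you want to complete your approach, the cleanest route is to abandon the filtration and instead check directly, as the paper does, that the explicit PBW complement $\Lambda'\otimes U'\oplus\Lambda''\otimes L_{0,n}(\mu)$ is preserved by $\fg'$; the point is that the cross terms produced by $\fg'_{-1}$ acting on $\Lambda''$-monomials land in $\Lambda'\otimes U'$ rather than in $\Lambda'\otimes L_{0,n-1}(\mu')$, which is exactly what your filtration argument cannot see.
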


\begin{proof}  First, consider $L_{0,n}(\mu)$ as a $\fg'_{0}$-module.  By complete reducibility and the fact that the highest weight vector in $L_{0,n}(\mu)$ is a highest weight vector of weight $\mu'$ for $\fg'$, it follows that 
\[
L_{0,n}(\mu) \cong L_{0,n-1}(\mu ') \oplus U'
\] for some $\fg'_{0}$-submodule $U'$.

The superspace $\Lambda^{\bullet}\left(\fg_{1} \right)$ has a basis consisting of monomials in the elements $b_{i,j}$ for $1 \leq i \leq j \leq n$. Let $\Lambda'$ denote the subspace spanned by the monomials in the elements $\left\{b_{i,j}  \mid  1 \leq i \leq  j \leq n, i, j \neq k \right\}$ and let $\Lambda''$ be the span of all monomials which contain at least one $b_{i,k}$ or $b_{k,i}$ for $1 \leq i \leq n$.  As a superspace, we have the following decomposition:
\[
\Lambda^{\bullet}\left(\fg_{1} \right) = \Lambda' \oplus \Lambda''.
\] From the PBW theorem we have 
\begin{equation}\label{E:gradedsum}
\Delta_{n}(\mu) = \Lambda^{\bullet}(\fg_{1}) \otimes L_{0, n}(\mu )
\end{equation}
as a superspace. Combining this with the decompositions given above yields the following decomposition as superspaces: 
\[
\Delta_{n}(\mu) = \Big(\Lambda' \otimes L_{0, n-1}(\mu')  \Big) \oplus \Big(  \Lambda' \otimes U' \oplus \Lambda'' \otimes L_{0,n}(\mu) \Big).
\]  Using the commutator formulas given in \cref{SS:conventions} it follows this is a decomposition as $\fg'$-modules.

It remains to verify $\Lambda' \otimes L_{0, n-1}(\mu')$ is isomorphic to $\Delta_{n-1}(\mu')$ as a $\fg'$-module.  Since $1 \otimes L_{0, n-1}(\mu')$ is isomorphic to  $ L_{0,n-1}(\mu') $ as a $\fg'_{0}\oplus \fg'_{-1}$-module, the universal property of $\Delta_{n-1}(\mu')$ implies there is a surjective $\fg '$-module homomorphism $\Delta_{n-1}(\mu') \to \Lambda' \otimes L_{0, n-1}(\mu')$.  As the dimensions of these superspaces coincide, it is an isomorphism.
\end{proof}

The module $\Delta(\mu)$ admits a $\Z$-grading determined by \cref{E:gradedsum}.  Namely, the $\Z$-grading is given by
 \[
\Delta (\mu) = \Delta (\mu)_{0} \oplus \Delta (\mu)_{1} \oplus \Delta (\mu)_{2} \oplus \dotsb, 
\] where 
\[
\Delta (\mu)_{d} = \Lambda^{d}(\fg_{1}) \otimes L_{0}(\mu).
\]  Moreover this $\Z$-grading is compatible with the $\Z$-grading of $\fg$ introduced in \cref{SS:LSAoftypeP} in the sense that $\fg_{r}.\Delta(\mu)_{s} \subseteq \Delta(\mu)_{r+s}$ for all $r,s \in \Z$.  

Given a homogeneous $x \in \fg$, we write $\langle x \rangle$ for the Lie subsuperalgebra generated by $x$.  In particular, if $x \in \fg_{-1}$, then $\langle x \rangle$ is a one-dimensional Lie superalgebra concentrated in odd parity and the enveloping superalgebra $U(\langle x \rangle)$ is isomorphic to an exterior algebra on one generator.  In this case it is well known that the only indecomposable modules (up to parity shift) are the trivial module and its projective cover, $U(\langle x \rangle )$.

In what follows, for a partition $\la$ write $\ell (\lambda)$ for the number of nonzero parts of $\lambda$, $2\la$ for the partition obtained by doubling all entries of $\la$, and $\la'$ for the conjugate (or transpose) of $\la$.  We write $c_{\la_{1}, \la_{2}}^{\la_{3}}$ for the Littlewood-Richardson coefficient defined by 
\[
c_{\la_{1}, \la_{2}}^{\la_{3}} = \left[ L_{0}(\la_{1}) \otimes L_{0}(\la_{2}): L_{0}(\la_{3}) \right].
\]

\begin{theorem}\label{T:rankvariety}  Let $\mu\in X^{+}(T)$ be fixed and set $o$  to be the number of odd runs in $\mu$.  Then there is a rank $n-o$ matrix $x \in \fg_{-1}$ such that $\Delta(\mu)$ is not free as a $U\left( \langle x \rangle\right)$-module.  In particular, $x \in \VV_{\fg_{-1}}(\Delta(\mu))$.
\end{theorem}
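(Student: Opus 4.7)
I would argue by induction on $n$, using \cref{L:RestrictingKacModules} as the main reduction tool. The base case $n=1$ is trivial: $\fg_{-1}=0$, $o=1$, and $x=0$ suffices.

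For the inductive step, first suppose $\mu$ has at least one odd run. Pick a position $k$ so that $\bar\mu_k$ is either an isolated dot in the weight diagram of $\mu$ or an endpoint of an odd run of size at least $3$. Inspecting the description of $\mu'$ preceding \cref{L:RestrictingKacModules}, one checks that the number of odd runs strictly decreases by one in passing from $\mu$ to $\mu'$: either one has deleted a singleton run, or one has shrunk an odd run of size $\ge 3$ to an even run, and no runs merge or split because the gap to the neighbouring dots is preserved after the shift. Hence $o'=o-1$ and $(n-1)-o'=n-o$. By induction, $\fg'(k)\cong\fp(n-1)$ contains an element $x'\in\fg'(k)\cap\fg_{-1}$ of rank $n-o$ with $\Delta_{n-1}(\mu')$ not free over $U(\langle x'\rangle)$; viewing $x'$ inside $\fg_{-1}$ preserves its rank. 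The $\fg'(k)$-module direct summand $\Delta_{n-1}(\mu')\subseteq\Delta_n(\mu)$ from \cref{L:RestrictingKacModules} is also a $\langle x'\rangle$-module summand, forcing $\Delta_n(\mu)$ to fail to be free over $U(\langle x'\rangle)$.

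The remaining case is that every run of $\mu$ is even, so $o=0$ and we need $x\in\fg_{-1}$ of full rank $n$. I would take
\[
x=\sum_{k}c_{p_k,\,p_k+1},
\]
where the sum ranges over $n/2$ disjoint pairs of adjacent matrix indices $(p_k,p_k+1)$ lying in a single even run (so $\mu_{p_k}=\mu_{p_k+1}$); the pairs exhaust $\{1,\dots,n\}$, and $x$ has rank $n$. To see $\Delta_n(\mu)$ is not free over $U(\langle x\rangle)$, I would exhibit $1\otimes v_+\in\ker(x)\setminus\Im(x)$, where $v_+\in L_{0,n}(\mu)$ is a highest weight vector. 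The kernel statement is automatic: $x$ has $\Z$-degree $-1$ in the natural $\Z$-grading on $\Delta_n(\mu)$ and $1\otimes v_+$ lies in the bottom piece $\Delta_n(\mu)_0=L_{0,n}(\mu)$. The non-image statement reduces to $v_+\notin [x,\fg_1]\cdot L_{0,n}(\mu)$, because the identity $x(y\otimes v)=1\otimes [x,y]v$ (derived from $xy=[x,y]-yx$ and $x\cdot v=0$) identifies $\Im(x)\cap\Delta_n(\mu)_0$ with $1\otimes[x,\fg_1]\cdot L_{0,n}(\mu)$. Using the commutator formulas \cref{E:commutator1,E:commutator2,E:commutator3}, the diagonal part of $[x,\fg_1]$ is spanned by the $-h_{p_k}+h_{p_k+1}$, each of which annihilates $v_+$ because $\mu_{p_k}=\mu_{p_k+1}$.

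The main obstacle is the weight-by-weight analysis of the off-diagonal contributions in this all-even case: individual raising root vectors $a_{r,s}$ occurring in $[x,\fg_1]$ can themselves have $v_+$ in their image on $L_{0,n}(\mu)$, and what must be verified is that the pairing structure within runs forces every such $v_+$-contribution in $[x,\fg_1]\cdot L_{0,n}(\mu)$ to appear coupled with a nonzero ``companion'' contribution in a different weight space (the small example $n=4$, $\mu=(1,1,0,0)$, $x=c_{1,2}+c_{3,4}$, $L_{0,n}(\mu)=\Lambda^2\C^4$ already illustrates this, where every $v_+=e_1\wedge e_2$ term is paired with $\pm e_3\wedge e_4$). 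One then needs to show these companions cannot be canceled across the various summands, so that $v_+$ never appears alone; I expect this is cleanest via an $\fg_0$-equivariance argument identifying $[x,\fg_1]\cdot L_{0,n}(\mu)$ as a proper $\fg_0$-submodule of $L_{0,n}(\mu)$ avoiding the highest-weight line.
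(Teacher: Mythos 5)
Your inductive step (reducing by one odd run via \cref{L:RestrictingKacModules}, verifying $o'=o-1$ and that rank is preserved) is correct and is exactly what the paper does. The gap is in the base case $o=0$, where your intended closing argument cannot work.

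You reduce to showing $1\otimes v_{+}\notin\Im(x)$, equivalently $v_{+}\notin[x,\fg_{1}]\cdot L_{0,n}(\mu)$, and propose to finish by ``an $\fg_{0}$-equivariance argument identifying $[x,\fg_{1}]\cdot L_{0,n}(\mu)$ as a proper $\fg_{0}$-submodule of $L_{0,n}(\mu)$ avoiding the highest-weight line.'' This fails on two counts. First, $[x,\fg_{1}]\cdot L_{0,n}(\mu)$ is not a $\fg_{0}$-submodule: for $a\in\fg_{0}$, $a\cdot[x,y]v = [[a,x],y]v + [x,[a,y]]v + [x,y]av$, and $[[a,x],y]$ need not lie in $[x,\fg_{1}]$ unless $[a,x]=0$. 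The correct equivariance is with respect to the centralizer $\mathfrak{c}_{x}=\{a\in\fg_{\0}\mid [a,x]=0\}$, not all of $\fg_{0}$. Second, even ignoring this, $L_{0,n}(\mu)$ is an \emph{irreducible} $\fg_{0}$-module, so it has no nonzero proper $\fg_{0}$-submodules at all; the conclusion you are aiming for is impossible as stated. (In your $n=4$, $\mu=(1,1,0,0)$ example the ``companion'' pairing you observe is precisely $\mathfrak{sp}(4)$-structure, not $\gl(4)$-structure.)

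The paper's base case takes a different and more robust route: it chooses $x$ so that $\mathfrak{c}_{x}\cong\mathfrak{sp}(2n)$ sits inside $\fg_{\0}\cong\gl(2n)$ in the standard way, then uses the $\gl(2n)\downarrow\mathfrak{sp}(2n)$ branching rule of Howe--Tan--Willenbring (relying crucially on the all-even-runs hypothesis to force all column lengths of $\mu^{+},\mu^{-}$ to be even) to prove two facts: $\Delta(\mu)_{0}=L_{0}(\mu)$ \emph{does} contain a trivial $\mathfrak{c}_{x}$-summand, and $\Delta(\mu)_{1}=S^{2}(V)\otimes L_{0}(\mu)$ does \emph{not}. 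It then picks $v$ spanning such a trivial summand (not necessarily $v_{+}$ --- indeed $v_{+}$ need not generate a trivial $\mathfrak{c}_{x}$-summand), and since the $x$-action $\Delta(\mu)_{1}\to\Delta(\mu)_{0}$ is a $\mathfrak{c}_{x}$-map, Schur's lemma rules out any preimage $w$. This replacement of a specific vector computation by a representation-theoretic obstruction for the centralizer is the essential idea your proposal is missing.
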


\begin{proof}  The proof is by induction on the number of odd runs in $\mu$. If $\mu$ has an odd run, let $\mu_{k}$ be an entry in such a run. Let $\fg'=\fg'(k)$ and $\mu'$ be as in \cref{L:RestrictingKacModules}. By the induction hypothesis there is an element $x\in\fg'_{-1}$ of rank $(n-1)-(o-1)$ such that $\Delta_{n-1}(\mu') \vert_{U(\langle x \rangle)}$ is not free. By \cref{L:RestrictingKacModules}, $x\in\fg_{-1}$ is an element of rank $n-o$ such that $\Delta_{n}(\mu) \vert_{U(\langle x \rangle)}$ is not free.

Thus it remains to check the base case $o=0$.
 With this assumption,  $n$ is even and we choose to reindex and write it as $2n$ for convenience.  Furthermore, by tensoring with the one-dimensional supertrace representation if needed, we may assume without loss that the $n$th coordinate of $\mu$ is equal to zero.  

Set 
\[
x = \left(\begin{matrix} 0 & 0 & 0 & 0 \\
                             0 & 0 & 0 & 0 \\
                             0 & I_{n} & 0 & 0 \\
                             -I_{n} & 0 & 0 & 0 
\end{matrix} \right) \in \fg_{-1},
\] where $I_{n}$ denotes the $n \times n$ identity matrix and $0$ denotes the $n \times n$ zero matrix.  Set 
\[
\mathfrak{c}_{x} = \left\{ a \in \fg_{\0 } \mid [a, x]=0 \right\}.
\] This is the subalgebra of $\fg_{\0}$ consisting of elements which centralize $x$.  By comparing with \cite[Section~1.2]{humphreysLieAlg} we see $\mathfrak{c}_{x} \cong \mathfrak{sp}(2n)$.  Explicitly, 
\[
\mathfrak{c}_{x} = \left\{ \left(\begin{matrix} m & q & 0 & 0 \\
                             p & -m^{t} & 0 & 0 \\
                             0 & 0 & -m^{t} & -p \\
                             0 & 0 & -q & m 
\end{matrix} \right) \right\},
\]
where $m, p, q$ are $n \times n$ matrices with $p$ and $q$ symmetric.

\medskip
\noindent  \textbf{Claim 1:} $\Delta(\mu)_{0} \cong L_{0}(\mu)$ contains a trivial direct summand as a $\mathfrak{c}_{x}$-module.

\smallskip
Since  $\mu \in X^{+}(T)$ is assumed to have $n$th coordinate equal to zero, there are unique partitions $\mu^{+}$ and $\mu^{-}$ with at most $n$ parts such that
\[
\mu = \sum_{i=1}^{\ell \left(\mu^{+} \right)} \mu^{+}_{i}\varepsilon_{i} - \sum_{i = 1}^{\ell \left(\mu^{-} \right)} \mu^{-}_{\ell \left(\mu^{-} \right)+1-i} \varepsilon_{2n+1-i}.
\]   Furthermore, since by assumption $\mu$ consisted of only even runs, the column lengths of $\mu^{+}$ and $\mu^{-}$ are necessarily even.
By \cite[Theorem~2.4.2]{HTW:05}, if $\nu$ is a partition with at most $n$ parts, then  the multiplicity of the simple $\mathfrak{c}_{x} \cong \mathfrak{sp}(2n)$-module of highest weight $\nu$ in the $\fg_{\0} \cong \mathfrak{gl}(2n)$-module $L_{0}(\mu)$ is given by 
\begin{equation}\label{E:LRcoefficients}
\sum_{\alpha, \beta, \gamma, \delta} c_{\alpha, \beta}^{\nu}\, c_{\alpha, (2\gamma)'}^{\mu^{+}}\, c_{\beta, (2\delta)'}^{\mu^{-}} \, ,
\end{equation} where the sum is over all partitions $\alpha$, $\beta$, $\gamma$, $\delta$.

Set $\nu = \alpha = \beta = \emptyset$.   For these choices for $\alpha$, $\beta$, $\gamma, \delta$, and $\nu$ it is easy to verify the Littlewood-Richardson coefficients appearing as factors of the corresponding term in \cref{E:LRcoefficients} are positive.  Consequently the trivial $\mathfrak{c}_{x}$-module appears as a direct summand of $L_{0}(\mu)$, as claimed.

\medskip
\noindent \textbf{Claim 2:} $\Delta(\mu)_{1} \cong \fg_{1}\otimes L_{0}(\mu)$ does not contain a trivial direct summand as a $\mathfrak{c}_{x}$-module.  

\smallskip
We first observe that as a $\fg_{\0}$-module, $\fg_{1} \cong S^{2}(V)$, where $V$ is the natural $\fg_{\0}\cong \gl (2n)$-module and $\Delta(\mu)_{1} \cong S^{2}(V) \otimes L_{0}(\mu)$ as a $\fg_{\0}$-module.  We then have 

\begin{multline}
\Hom_{\mathfrak{c}_{x}}\left( \Delta(\mu)_{1}, \C \right) \cong \Hom_{\mathfrak{c}_{x}}\left( S^{2}(V)\otimes L_{0}(\mu) , \C \right) \\ \cong \Hom_{\mathfrak{c}_{x}}\left( L_{0}(\mu), S^{2}(V)^{*} \right) \cong \Hom_{\mathfrak{c}_{x}}\left( L_{0}(\mu), S^{2}(V) \right),
\end{multline}
where the last isomorphism follows from the fact that $S^{2}(V) \cong S^{2}(V)^{*}$  as $\mathfrak{c}_{x}$-modules.  Therefore, $\Delta(\mu)_{1}$ contains a trivial direct summand as a $\mathfrak{c}_{x}$-module if and only if $S^{2}(V)$ is a direct summand of $L_{0}(\mu)$.  

Since $S^{2}(V)$ is a simple $\mathfrak{c}_{x}$-module we can again use \cref{E:LRcoefficients}, but now with $\nu = (2)$.  For $c_{\alpha, \beta}^{(2)}$ to be nonzero it must be that we are in one of three possible cases: (i) $\alpha = \emptyset$, $\beta = (2)$; (ii) $\beta = \emptyset$, $\alpha = (2)$; or (iii) $\alpha = \beta = (1)$.  We handle each case in turn.  For (i) we have $\alpha=\emptyset$ and $\beta = (2)$.  However, since $\mu^{-}$ and $(2\delta)'$ will both be partitions whose columns all have even length, the Littlewood-Richardson rule easily shows $c_{(2), (2\delta)'}^{\mu^{-}} =0$ regardless of $\delta$.  For (ii) one argues similarly and shows $c_{(2), (2\gamma)'}^{\mu^{+}} =0$ for all $\gamma$.  Finally, for (iii) an even easier application of the Littlewood-Richardson rule verifies $c_{(1), (2\gamma)'}^{\mu^{+}}=c_{(2), (2\delta)'}^{\mu^{-}}=0$ regardless of the choices of $\gamma$ or $\delta$.  Therefore, in every case $S^{2}(V)$ does not appear as a summand of $L_{0}(\mu)$ as a $\mathfrak{c}_{x}$-module and, hence, $\Delta (\mu)_{1}$ does not contain a trivial direct summand for $\mathfrak{c}_{x}$.

\medskip
We can now prove the statement of the theorem.  Since $U\left( \langle x \rangle  \right)$ is isomorphic to an exterior algebra on one generator, $\Delta(\mu)$ is isomorphic to a direct sum of modules isomorphic to $\C$ and $U(\langle x \rangle)$.  Consequently it suffices to exhibit a nonzero vector $v \in \Delta (\mu)$ such that $xv=0$ and to prove there does not exist a vector $w \in \Delta(\mu)$ for which $xw = v$.  Such a vector $v$ necessarily spans a trivial direct summand which, in turn, implies $x \in \VV_{\fg_{-1}}\left(\Delta(\mu) \right)$, as desired.

By Claim 1 we may choose a $v \in \Delta(\mu)_{0}$ which spans a trivial direct summand for $\mathfrak{c}_{x}$ and the $\Z$-grading on $\Delta (\mu)$ implies $xv=0$.  If there existed a $w \in \Delta(\mu)$ for which $xw =v$, then without loss we could assume $w \in \Delta(\mu)_{1}$ (again thanks to $\Z$-grading considerations).  Furthermore, since $\fc_{x}$ is semisimple and the action of $x$ defines a $\mathfrak{c}_{x}$-module homomorphism, Schur's Lemma would then imply $w$ must span a trivial $\fc_{x}$-module in $\Delta(\mu)_{1}$.  However, by Claim 2 no such vector can exist. 
\end{proof}

\subsection{Complexity and Support Varieties}\label{SS:complexityandsupports}

Recall we write $(\fg_{-1})_{k}$ for the rank $k$ matrices of $\fg_{-1}$; its closure $\overline{(\fg_{-1})_{k}}$ consists of the matrices of rank at most $k$. Let $G_{0} \cong \operatorname{GL}(n)$ with the adjoint action on $\fg_{\pm 1}$ and $\fg_{\1}$.

\begin{theorem}\label{T:complexityandsupportequalities}
Let $\mu$ be a dominant weight for $\fg =\fp (n)$ and let $o$ be the number of odd runs in $\mu$. Then 
\[
\V_{\fg_{-1}}(\Delta(\mu)) = \overline{(\fg_{-1})_{n-o}},
\]
and 
\[
c_{\F}(\Delta(\mu)) = \dim \overline{(\fg_{-1})_{n-o}} = \binom{n}{2} - \binom{o}{2}.
\]
Moreover the projective resolution of $\Delta(\mu)$ constructed in \cref{S:TheAlgorithm} has the same rate of growth as the minimal projective resolution.
\end{theorem}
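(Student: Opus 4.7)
The plan is to sandwich three distinct quantities between the same pair of bounds and then read off all three conclusions at once.

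For the lower bound, \cref{T:rankvariety} supplies a matrix $x \in \fg_{-1}$ of rank $n-o$ belonging to $\VV_{\fg_{-1}}(\Delta(\mu))$. The support variety is Zariski closed and $G_{0}$-stable, since the $\fg_{0}$-action on $\Delta(\mu)$ integrates to the $G_{0}$-action intertwining the adjoint action $g\cdot X = g X g^{t}$ on $\fg_{-1}$. The $G_{0}$-orbit of any rank $n-o$ skew-symmetric matrix is exactly $(\fg_{-1})_{n-o}$, so taking closures yields
\[
\overline{(\fg_{-1})_{n-o}} \subseteq \VV_{\fg_{-1}}(\Delta(\mu)),
\]
and in particular $\dim \VV_{\fg_{-1}}(\Delta(\mu)) \ge \binom{n}{2} - \binom{o}{2}$. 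By \cref{E:complexityequalssupportsforp} applied to $\Delta(\mu)$ viewed as an object of $\F(\fp^{-},\fg_{0})$ via restriction, this translates to $c_{\F(\fp^{-},\fg_{0})}(\Delta(\mu)) \ge \binom{n}{2} - \binom{o}{2}$.

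For the upper bound, note that by PBW, $U(\fg)$ is free of rank $2^{\dim\fg_{1}}$ as a right $U(\fp^{-})$-module, with basis $\Lambda(\fg_{1})$. Thus restriction carries projectives in $\F(\fg,\fg_{0})$ to projectives in $\F(\fp^{-},\fg_{0})$ and scales dimensions only by this fixed constant. Restricting the minimal projective resolution of $\Delta(\mu)$ in $\F$ therefore yields $c_{\F(\fp^{-},\fg_{0})}(\Delta(\mu)) \le c_{\F}(\Delta(\mu))$. Combining with the trivial inequality $c_{\F}(\Delta(\mu)) \le c(\mu)$ (the minimal resolution has dimensions no larger than those of any other projective resolution) and with the upper bound $c(\mu) \le \binom{n}{2} - \binom{o}{2}$ from \cref{T:complexityupperbound}, we obtain the chain
\[
\binom{n}{2} - \binom{o}{2} \;\le\; c_{\F(\fp^{-},\fg_{0})}(\Delta(\mu)) \;\le\; c_{\F}(\Delta(\mu)) \;\le\; c(\mu) \;\le\; \binom{n}{2} - \binom{o}{2}.
\]

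Every inequality is consequently an equality. This immediately delivers the complexity formula $c_{\F}(\Delta(\mu)) = \binom{n}{2} - \binom{o}{2}$; the assertion $c(\mu) = c_{\F}(\Delta(\mu))$, which is precisely the statement that the algorithmic resolution of \cref{S:TheAlgorithm} has the same rate of growth as the minimal one; and the dimension equality $\dim \VV_{\fg_{-1}}(\Delta(\mu)) = \dim \overline{(\fg_{-1})_{n-o}}$. The latter, combined with the containment established above and the observation that every closed $G_{0}$-stable subset of $\fg_{-1}$ is one of the nested orbit closures $\overline{(\fg_{-1})_{2k}}$, forces $\VV_{\fg_{-1}}(\Delta(\mu)) = \overline{(\fg_{-1})_{n-o}}$.

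The main obstacle is more technical than conceptual: one must justify that restriction along $\fp^{-} \hookrightarrow \fg$ preserves projectivity with only constant-factor dimension growth (handled by the PBW decomposition of $U(\fg)$ over $U(\fp^{-})$) and that $\VV_{\fg_{-1}}(\Delta(\mu))$ is genuinely $G_{0}$-stable (handled by integrating the $\fg_{0}$-action). The substantive input has already been assembled in \cref{T:rankvariety,T:complexityupperbound,E:complexityequalssupportsforp}; this theorem is essentially the bookkeeping that combines them.
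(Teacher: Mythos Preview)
Your proof is correct and follows essentially the same sandwich argument as the paper: both establish the containment $\overline{(\fg_{-1})_{n-o}} \subseteq \V_{\fg_{-1}}(\Delta(\mu))$ via \cref{T:rankvariety} and $G_{0}$-invariance, then chain the inequalities
\[
\dim \overline{(\fg_{-1})_{n-o}} \le \dim \V_{\fg_{-1}}(\Delta(\mu)) = c_{\F(\fp^{-},\fg_{0})}(\Delta(\mu)) \le c_{\F}(\Delta(\mu)) \le c(\mu) \le \dim \overline{(\fg_{-1})_{n-o}}
\]
and collapse them. Your write-up is in fact slightly more explicit than the paper's in two places: you spell out the PBW freeness argument for why restriction to $\fp^{-}$ preserves projectives (the paper cites \cite[Section 6.1]{BKN4} for this), and you observe that the closed $G_{0}$-stable subsets of $\fg_{-1}$ form a chain of orbit closures, which cleanly justifies upgrading the dimension equality to the variety equality.
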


\begin{proof}
The formula for $\dim \overline{(\fg_{-1})_{n-o}}$ was stated in \cref{T:complexityupperbound}. From \cref{T:rankvariety} and the rank variety description given in \cref{SS:introtosupports} it follows there exists  $x \in\V_{\fg_{-1}}(\Delta(\mu))$ of rank $n-o$.  Moreover, since the support variety of a $\fg$-module restricted to $\fg_{-1}$ is a closed, $G_{0}$-invariant subvariety of $\fg_{-1}$, it follows that $\overline{(\fg_{-1})_{n-o}} \subseteq \V_{\fg_{-1}}(\Delta(\mu))$. Using \cite[Section 6.1]{BKN4} together with \cref{E:complexityequalssupportsforp,T:complexityupperbound}, we deduce
$$
\dim \overline{(\fg_{-1})_{n-o}} \le \dim \V_{\fg_{-1}}(\Delta(\mu)) = c_{\F(\fp^{-},\fg_{0})}(\Delta(\mu)) \le c_{\F(\fg,\fg_{0})}(\Delta(\mu)) \le c(\mu)   \leq \dim \overline{(\fg_{-1})_{n-o}}.
$$
Hence all the inequalities are equalities, and the theorem follows.
\end{proof}

Let 
\[
\XX_{\C} = \left\{x \in \fg_{\1} \mid  [x,x]=0 \right\}
\] be the cone of odd self-commuting elements in $\fg$. For $M \in \F$,  Duflo and Serganova \cite{dufloserganova} introduced the \emph{associated variety} for $M$:
\[
\XX_{M} = \left\{x \in \XX_{\C} \mid M \text{ is not projective as a $U\!\left(\langle x \rangle \right)$-module} \right\} \cup \left\{0 \right\}.
\]  

\begin{theorem}\label{T:complexityandsupports}  Let $\mu \in X^{+}(T)$ be a dominant weight for $\fg =\fp (n)$ and let $o$ be the number of odd runs in $[\mu]$.  Then
\begin{enumerate}
\item  $\XX_{\Delta(\mu)} =  \overline{(\fg_{-1})_{n-o}} $;
\item $\VV_{(\fg , \fg_{\0})}\left(\Delta(\mu) \right) = \left\{0 \right\}$;
\item  $c_{\F}\left(\Delta(\mu) \right) = \dim  \XX_{\Delta(\mu)} + \dim \VV_{(\fg , \fg_{\0})}\left(\Delta(\mu) \right) $.
\end{enumerate}
\end{theorem}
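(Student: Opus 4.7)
The plan is to prove (1) and (2) separately, with (3) following immediately by combining them with \cref{T:complexityandsupportequalities}. For (1), I will establish both inclusions. The forward inclusion $\overline{(\fg_{-1})_{n-o}} \subseteq \XX_{\Delta(\mu)}$ is straightforward: \cref{T:rankvariety} produces an element $x \in \fg_{-1}$ of rank $n-o$ for which $\Delta(\mu)$ is not projective over $U(\langle x \rangle)$; since $\fg_{-1} \subseteq \XX_{\C}$ (as $\fg_{-1}$ is abelian), $x \in \XX_{\Delta(\mu)}$, and closedness plus $G_0$-invariance of $\XX_{\Delta(\mu)}$ give $\overline{G_0 \cdot x} = \overline{(\fg_{-1})_{n-o}} \subseteq \XX_{\Delta(\mu)}$. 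For the reverse inclusion, the plan is to exploit the $\C^{*}$-action on $\fg$ coming from the $\Z$-grading automorphisms $\sigma_\xi \colon a \mapsto \xi^{i}a$ for $a \in \fg_{i}$: these act on $\fg_{\1}$ by $\sigma_{\xi}(x_{-1}+x_{1}) = \xi^{-1}x_{-1}+\xi x_{1}$ and preserve $\XX_{\C}$. Since $\Delta(\mu)$ carries a compatible $\Z$-grading $\Delta(\mu) = \bigoplus_{d} \Lambda^{d}(\fg_{1}) \otimes L_{0}(\mu)$, pulling back the $\fg$-action along $\sigma_\xi$ yields a module isomorphic to $\Delta(\mu)$ via the rescaling $v \mapsto \xi^{d}v$ for $v \in \Delta(\mu)_{d}$; hence $\XX_{\Delta(\mu)}$ is $\sigma_{\xi}$-invariant for every $\xi \in \C^{*}$. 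Given $x = x_{-1} + x_{1} \in \XX_{\Delta(\mu)}$, rescaling $\sigma_{\xi}(x)$ by $\xi^{-1}$ (using that $\XX_{\Delta(\mu)}$ is a cone) yields $\xi^{-2}x_{-1} + x_{1} \in \XX_{\Delta(\mu)}$, and letting $\xi \to \infty$ and invoking closedness forces $x_{1} \in \XX_{\Delta(\mu)} \cap \fg_{1}$. But the PBW isomorphism $\Delta(\mu) \cong \Lambda^{\bullet}(\fg_{1}) \otimes L_{0}(\mu)$ makes $\Delta(\mu)$ free as a $U(\fg_{1})$-module, hence free over $U(\langle x_{1} \rangle)$ for any nonzero $x_{1} \in \fg_{1}$, so $\XX_{\Delta(\mu)} \cap \fg_{1} = \{0\}$ and therefore $x_{1} = 0$. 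Hence $\XX_{\Delta(\mu)} \subseteq \fg_{-1}$, and \cref{T:complexityandsupportequalities} then gives $\XX_{\Delta(\mu)} = \VV_{\fg_{-1}}(\Delta(\mu)) = \overline{(\fg_{-1})_{n-o}}$.

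For (2), the plan is to prove the stronger vanishing $\Ext^{d}_{\F}(\Delta(\mu), \Delta(\mu)) = 0$ for all $d \geq 1$; this forces $R^{+} \subseteq J_{(\fg, \fg_{\0})}(\Delta(\mu))$ (since cup product by a positive-degree class lands in $\Ext^{>0} = 0$), whence $\VV_{(\fg, \fg_{\0})}(\Delta(\mu)) \subseteq \operatorname{MaxSpec}(R/R^{+}) = \{0\}$. Using the resolution $P_{\bullet}(\mu)$ from \cref{T:BBProjResolution}, $\dim \Ext^{d}_{\F}(\Delta(\mu), \Delta(\mu)) \leq \dim \Hom_{\F}(P_{d}(\mu), \Delta(\mu))$, so it suffices to show the latter vanishes for $d \geq 1$. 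A summand $P(\la)$ of $P_{d}(\mu)$ corresponds by \cref{T:Projectivesandfunctions} to an allowable function $f \colon \mu \to \la$, so $\mu \leq \la$ in the poset of \cref{SS:WeightsandWeightDiagrams}, and by \cref{T:degree} it satisfies $d = \tfrac{1}{2}\ell(\la, \mu) - \mathtt{L}(f)$. Setting $\la = \mu$ gives $\ell(\la, \mu) = 0$ and hence $d = -\mathtt{L}(f) \leq 0$, so the only summand with $\la = \mu$ appears in degree $0$ (from the identity function). Thus for $d \geq 1$ every summand $P(\la)$ of $P_{d}(\mu)$ satisfies $\mu < \la$ strictly. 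On the other hand, a nonzero element of $\Hom_{\F}(P(\la), \Delta(\mu))$ picks out $L(\la)$ (up to parity shift) as a composition factor of $\Delta(\mu)$, which by the highest weight category structure requires $\la \leq \mu$. These two inequalities are incompatible when $d \geq 1$, giving the desired vanishing.

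Part (3) then follows immediately: by (1), $\dim \XX_{\Delta(\mu)} = \dim \overline{(\fg_{-1})_{n-o}} = \binom{n}{2} - \binom{o}{2}$; by (2), $\dim \VV_{(\fg, \fg_{\0})}(\Delta(\mu)) = 0$; and by \cref{T:complexityandsupportequalities}, $c_{\F}(\Delta(\mu)) = \binom{n}{2} - \binom{o}{2}$. The main obstacle I anticipate is the $\C^{*}$-limit argument in (1): one must carefully justify that the one-parameter family $\xi^{-2}x_{-1} + x_{1}$ remains in the closed, $G_{0}$-stable cone $\XX_{\Delta(\mu)} \subseteq \XX_{\C}$ for all $\xi \in \C^{*}$, and that its limit as $\xi \to \infty$ inherits this membership. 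The combinatorial argument for (2) is mostly routine once the allowable-function machinery of \cref{SS:KeyFunctions} is available, though minor care is needed to handle parity shifts when counting composition factors contributing to $\Hom_{\F}(P(\la), \Delta(\mu))$.
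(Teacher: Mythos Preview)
Your proof is correct and follows essentially the same strategy the paper invokes. For (1), the paper defers to \cite[Theorem 6.4.1(a)]{BKN4}, whose argument is precisely the $\Z$-grading/$\C^{*}$-action degeneration you carry out explicitly: using the grading automorphisms $\sigma_{\xi}$ to force $\XX_{\Delta(\mu)}\subseteq\fg_{-1}$, then invoking freeness over $U(\fg_{1})$ and the rank-variety identification with $\VV_{\fg_{-1}}(\Delta(\mu))$ from \cref{T:complexityandsupportequalities}. For (2), the paper cites \cite[Theorem 3.3.1]{BKN2}, which establishes $\VV_{(\fg,\fg_{\0})}(\Delta(\mu))=\{0\}$ by showing $\Ext^{d}_{\F}(\Delta(\mu),\Delta(\mu))=0$ for $d>0$ via the highest weight category structure; your argument recovers this same vanishing, but routes it through the explicit resolution $P_{\bullet}(\mu)$ and \cref{T:degree} rather than the general highest weight machinery. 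Both routes are valid and give the same conclusion; yours has the mild advantage of being self-contained within the paper, at the cost of depending on the specific resolution rather than an abstract categorical fact. Part (3) is identical in both.
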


\begin{proof}   A calculation entirely analogous to the proof of \cite[Theorem 6.4.1(a)]{BKN4} using the first equality in \cref{T:complexityandsupportequalities} proves (1). The argument used in the proof of \cite[Theorem 3.3.1]{BKN2} applies  and shows $\VV_{(\fg , \fg_{\0})}\left(\Delta(\mu) \right) = \left\{0 \right\}$ as claimed in (2).  Combining these results with the complexity calculation in \cref{T:complexityandsupportequalities} proves (3).
\end{proof}

\subsection{z-Complexity and Support Varieties for the Detecting Subalgebra}\label{SS:zcomplexityandfsupports}

For $\fg =\fp (n)$ set $h = \lfloor n/2 \rfloor$ and let $\ff_{\1}$ be the subspace of $\fg_{\1}$ spanned by $\left\{b_{i,h+i}  \mid 1 \leq i \leq h \right\} \cup \left\{ c_{i,h+i}  \mid  1 \leq i \leq h \right\}$.  Set $\ff_{\0}= \left[\ff_{\1}, \ff_{\1} \right] \subseteq \fg_{\0}$ and $\ff = \ff_{\0} \oplus \ff_{\1}$.  Then $\ff$ is a Lie subsuperalgebra of $\fg$ which in \cite{BKN1} was called a \emph{detecting subalgebra}\footnote{The careful reader will note that $\ff$ is actually a $G_{\0}$-conjugate of the detecting subalgebra introduced in \cite{BKN1}.  This choice has no effect on the results considered here and will be ignored.} of $\fg$.  Since $\ff$ is classical one can consider the cohomological support variety for a finite-dimensional $\ff$-module $M$. Moreover there is again a canonical isomorphism with the rank variety:
\[
\VV_{(\ff , \ff_{\0})}\left(M \right) \cong \VV^{\text{rank}}_{(\ff , \ff_{\0})}\left(M \right) = \left\{x \in  \ff_{\1}  \mid  \text{$M$ is not projective as a $U\!\left(\langle x \rangle \right)$-module} \right\} \cup \{0\}.
\]  We will identify the rank and support varieties for $\ff$.  Using the detecting subalgebra we can give the following geometric interpretation of the $z$-complexity of $\Delta(\mu)$.

\begin{theorem}\label{T:fsupportvariety}  Let $\mu \in X^{+}(T)$ be a dominant weight for  $\fp (n)$, set $h = \lfloor n/2 \rfloor$, and let $o=o(\mu)$ be the number of odd runs in $[\mu]$.  Then 
\[
\VV_{(\ff , \ff_{\0})}\left(\Delta(\mu) \right) = \left\{ \left. \sum_{i=1}^{h} a_{i}c_{i, h+i} \right| \text{$\# \left\{ i \mid a_{i} \neq 0 \right\} \leq \frac{n-o}{2}$} \right\}.
\] 
Moreover,
\[
\dim \VV_{(\ff , \ff_{\0})}\left(\Delta(\mu) \right) = \frac{n-o}{2}= c_{z}\left(\Delta(\mu) \right).
\]

\end{theorem}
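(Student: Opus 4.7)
The plan is to use the rank variety description: $\VV_{(\ff,\ff_{\0})}(\Delta(\mu))$ consists of those $x \in \ff_{\1}$ over which $\Delta(\mu)$ fails to be projective as a $U(\langle x\rangle)$-module. Write a typical $x \in \ff_{\1}$ as $x = x_{+} + x_{-}$ with $x_{+} = \sum_{i=1}^{h} \beta_{i} b_{i,h+i} \in \fg_{1} \cap \ff_{\1}$ and $x_{-} = \sum_{i=1}^{h} \alpha_{i} c_{i,h+i} \in \fg_{-1} \cap \ff_{\1}$. Viewing $x_{-}$ as a skew-symmetric $n \times n$ matrix, its rank equals $2 \cdot \#\{i : \alpha_{i} \neq 0\}$, so the set appearing on the right-hand side of the claimed identity is precisely $\overline{(\fg_{-1})_{n-o}} \cap \ff_{\1}$. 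I will establish the equality as two inclusions.

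For $\supseteq$: any element $x = x_{-}$ in the claimed set lies in $\overline{(\fg_{-1})_{n-o}} = \VV_{\fg_{-1}}(\Delta(\mu))$ by \cref{T:complexityandsupportequalities}, hence by the rank variety description $\Delta(\mu)$ is not free as a $U(\langle x\rangle) = \Lambda(x)$-module and so $x \in \VV_{(\ff,\ff_{\0})}(\Delta(\mu))$. For the reverse inclusion, the commutator formulas of \cref{SS:conventions} (or a direct matrix computation) give $[c_{i,h+i}, b_{j,h+j}] = \delta_{i,j}(h_{h+i} - h_{i})$, so $[x,x] = 2 \sum_{i} \alpha_{i}\beta_{i}(h_{h+i} - h_{i})$ and any self-commuting $x$ satisfies $\alpha_{i}\beta_{i} = 0$ for every $i$. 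When $x_{+} = 0$ the $\supseteq$ argument run in reverse yields the claimed rank condition on $x = x_{-}$; the main obstacle, treated below, is to show that a self-commuting $x$ with $x_{+} \neq 0$ does not lie in the support variety, equivalently, that $\Delta(\mu)$ is free over $\Lambda(x)$.

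For this main case I will exploit the $\Z$-grading $\Delta(\mu) = \bigoplus_{d \geq 0} \Lambda^{d}(\fg_{1}) \otimes L_{0}(\mu)$ compatible with the $\Z$-grading of $\fg$. Under this grading $x_{+}$ raises degree by one, acting by left multiplication on the $\Lambda(\fg_{1})$-factor, while $x_{-}$ lowers degree by one. The identities $x_{+}^{2} = x_{-}^{2} = 0$ (from abelianness of $\fg_{\pm 1}$) together with $x_{+}x_{-} + x_{-}x_{+} = 0$ (a restatement of $[x,x] = 0$) make $(\Delta(\mu), x_{+}, x_{-})$ into a super-bicomplex whose total differential is $x$. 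Because $x_{+} \neq 0$, the Koszul differential given by left multiplication by $x_{+}$ on $\Lambda(\fg_{1})$ is acyclic; tensoring with $L_{0}(\mu)$ shows the $x_{+}$-cohomology of $\Delta(\mu)$ vanishes identically. The spectral sequence of this bounded bicomplex then yields $\ker x / \operatorname{Im} x = 0$, which is exactly the condition that $\Delta(\mu)$ is free over $\Lambda(x)$.

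The dimension is then immediate: the variety is a union of $\binom{h}{(n-o)/2}$ coordinate subspaces of dimension $(n-o)/2$ in the $h$-dimensional space $\fg_{-1} \cap \ff_{\1}$, so $\dim \VV_{(\ff,\ff_{\0})}(\Delta(\mu)) = (n-o)/2$. For the equality with $c_{z}(\Delta(\mu))$, the upper bound $c_{z}(\Delta(\mu)) \leq c_{z}(P_{\bullet}(\mu)) = (n-o)/2$ follows from part~(4) of \cref{T:zcomplexity}, and the matching lower bound $c_{z}(M) \geq \dim \VV_{(\ff,\ff_{\0})}(M)$ is the general relation between $z$-complexity and the detecting-subalgebra support variety established in \cite{BKN1} (used analogously in \cite[Section~6]{BKN4}).
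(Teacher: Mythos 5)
Your proof of the set-theoretic identity $\VV_{(\ff,\ff_{\0})}(\Delta(\mu)) = \ff_{\1}\cap\overline{(\fg_{-1})_{n-o}}$ takes a genuinely different route from the paper, which simply invokes \cite[Theorem 9.2.1]{BKN4} for the reduction $\VV_{(\ff,\ff_{\0})}(\Delta(\mu)) = \ff_{\1}\cap\VV_{\fg_{-1}}(\Delta(\mu))$ and then reads off ranks. Your self-contained argument is essentially right: the commutator computation $[c_{i,h+i},b_{j,h+j}]=\delta_{i,j}(h_{h+i}-h_i)$ is correct, so $[x,x]=0$ forces $\alpha_i\beta_i=0$ for all $i$; and if $x_+\neq 0$ then $x_+$ acts on $\Delta(\mu)\cong\Lambda^{\bullet}(\fg_1)\otimes L_0(\mu)$ as left wedge multiplication on the first factor, which is exact. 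However, the framing of the crucial step as a ``spectral sequence of a bounded bicomplex'' is imprecise and does not quite hold as stated: $\Delta(\mu)$ carries only the single $\Z$-grading with $x_+$ of degree $+1$ and $x_-$ of degree $-1$, so there is no bigrading, and neither the increasing nor decreasing degree filtration is preserved by $x=x_++x_-$. What does work, and is what you actually need, is a descending induction on the top degree of a cocycle: if $v=\sum_d v_d\in\ker x$ with top degree $d_0$, then the degree-$(d_0+1)$ component of $xv=0$ forces $x_+v_{d_0}=0$, so $v_{d_0}=x_+w$ for some $w\in V_{d_0-1}$; replacing $v$ by $v-xw$ strictly lowers the top degree while preserving $\ker x$, and the process terminates since the grading is bounded below. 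You should replace the bicomplex language with this (or an equivalent, carefully constructed spectral sequence) to make the argument airtight.

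The second half contains a genuine gap. You obtain the upper bound $c_z(\Delta(\mu))\le(n-o)/2$ correctly from \cref{T:zcomplexity}, but for the matching lower bound you appeal to ``the general relation between $z$-complexity and the detecting-subalgebra support variety established in \cite{BKN1}.'' No such relation appears in \cite{BKN1}, which predates the notion of $z$-complexity entirely; nor is an inequality of the form $c_z(M)\ge\dim\VV_{(\ff,\ff_{\0})}(M)$ proved as a general theorem in \cite{BKN4}. The paper closes this gap by a contradiction argument: if $c_z(\Delta(\mu))$ were strictly less than $(n-o)/2$, then inserting this into the final displayed estimate in the proof of \cref{T:complexityupperbound} (which bounds $\dim P_d$ by a dimension bound on each summand times the number of summands) would force $c_{\F}(\Delta(\mu))<\binom{n}{2}-\binom{o}{2}$, contradicting \cref{T:complexityandsupports}. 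You need to supply an argument of this type rather than cite a non-existent general result.
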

\begin{proof}  As argued in the proof of \cite[Theorem 9.2.1]{BKN4}, the  $\Z$-grading on $\Delta(\mu)$ implies 
\[
\VV_{(\ff , \ff_{\0})}\left(\Delta(\mu) \right) = \ff_{\1} \cap \VV_{\fg_{-1}}\left(\Delta(\mu) \right).
\] Elements of $\ff_{\1} \cap \fg_{-1}$ are of the form $ \sum_{i=1}^{h} a_{i}c_{i, h+i}$ and have rank $2k$ if and only if precisely $k$ of the $a_{i}$ are nonzero.  Since $\VV_{\fg_{-1}}\left(\Delta(\mu) \right)$ is precisely the matrices of rank $n-o$ or less, the stated description of $\VV_{(\ff, \ff_{\0})}(\Delta(\mu))$ follows.   

As this variety has dimension $(n-o)/2$, the stated equalities will follow once we show the $z$-complexity of $\Delta(\mu)$ equals $(n-o)/2$.  By \cref{T:zcomplexity} $c_{z}(\Delta(\mu)) \leq (n-o)/2$.  However, if this were a strict inequality, then using this in the last displayed formula in the proof of \cref{T:complexityupperbound} would show the complexity of $\Delta(\mu)$ is strictly less than $\binom{n}{2} - \binom{o}{2}$, contradicting \cref{T:complexityandsupports}. 
\end{proof}

\let\section=\oldsection
\bibliographystyle{eprintamsmath}
\bibliography{BK1}

\end{document}